\newcommand{\cA}{\mathcal A}
\newcommand{\cB}{\mathcal B}
\newcommand{\cF}{\mathcal F}
\newcommand{\cL}{\mathcal L}
\newcommand{\cO}{\mathcal O}
\newcommand{\cR}{\mathcal R}
\newcommand{\cS}{\mathcal S}
\newcommand{\N}{\mathbb N}
\newcommand{\R}{\mathbb R}
\newcommand{\Z}{\mathbb Z}
\newcommand{\doub}{\textrm{\rm doub}}
\newcommand{\cov}{\textrm{\rm Cov}}
\newcommand{\corr}{\textrm{\rm Corr}}
 \newtheorem{theorem}{Theorem}[section]
 \newtheorem{lemma}[theorem]{Lemma}
 \newtheorem{corollary}[theorem]{Corollary}
 \newtheorem{proposition}[theorem]{Proposition}
 \theoremstyle{definition}
 \newtheorem{definition}[theorem]{Definition}
\newtheorem{remark}[theorem]{Remark}
 \newtheorem{notations}[theorem]{Notations}
 \newtheorem{conventions}[theorem]{Conventions}
 \newtheorem{example}[theorem]{Example}
 \newtheorem{context}[theorem]{Context}
 \numberwithin{equation}{section}
 \numberwithin{theorem}{section}
\begin{document}

\title{\textbf{An exposition of some basic features of strictly stationary, reversible Markov chains}}

\newcommand{\orcidauthorA}{0000-0000-000-000X} 

\author{Richard C.\ Bradley \\
Department of Mathematics, 
Indiana University, Bloomington, IN 47405, U.S.A.\\ bradleyr@indiana.edu}


\maketitle

\begin{abstract}It has been well known for some time that for strictly stationary Markov chains that are ``reversible'', the special symmetry (with the distribution of the Markov chain as a whole being invariant under a reversal of the ``direction of time'') provides special extra features in the mathematical theory. This paper here is primarily a purely expository review of some of the basic aspects of that special theory. The mathematical techniques employed in this review are relatively gentle, involving only some basic measure-theoretic probability theory. With the uncertain possible exception of the material in the final section, everything in this paper has long been well known, either explicitly or in a (perhaps somewhat hidden) implicit form.
\end{abstract}

\textbf{Keywords.} strictly stationary; reversible Markov chain; strong mixing conditions; geometric ergodicity; spectral gap.









\section{Introduction}
\label{sc1}
Over the past few decades, there has been considerable interest in the ``structures'', limit theory, and applications of strictly stationary Markov chains $X := (X_k$, $k\in \Z)$ that are ``reversible''--i.e. the distribution of the ``time-reversed'' Markov chain $(X_{-k}$, $k\in \Z)$ is the same as that of the Markov chain $X$ itself.  See for example the papers of Roberts and Rosenthal \cite{ref-journal-RR}, Roberts and Tweedie  \cite{ref-journal-RT} or the more recent papers of Longla and Peligrad  \cite{ref-journal-LP} and Longla  \cite{ref-journal-Longla17}, and the references in all of these papers.

This paper here is intended to give, in a ``leisurely'' form---that is, with generous detail and an intended ``generous absence of sophistication''---an exposition of a few standard basic features of strictly stationary Markov chains that are ``reversible''.  This paper is intended to be purely expository. We believe that practically everything in this paper---practically every concept, definition, statement, and proof---has already been in the literature for some time, in various papers by various researchers, either in transparent explicit forms or in disguised implicit forms.

A ``core purpose'' of this paper is to provide a ``leisurely'' exposition of a certain well known result that will be informally stated here under the heading ``Theorem \ref{th1.1}.1'' in quotation marks.

\medskip

\noindent \textbf{``Theorem 1.1.''} \label{th1.1}
\textit{Suppose $X := (X_k$, $k\in \Z)$ is a strictly stationary Markov chain with real state space, such that $X$ is ``reversible'' and also ``irreducible'' (in a sense related to---in fact equivalent to---``Harris recurrence''). Then that Markov chain $X$ satisfies ``geometric ergodicity'' if and only if it satisfies a certain ``$\cL^2$ spectral gap'' condition.}
\medskip

That result, in a precise form that was more general and more detailed in several different ways (including for example a more general state space), was given by Roberts and Tweedie [\cite{ref-journal-RT}, Theorem 2]   (see also page 39, lines 11-15 in that paper); it built directly on earlier results of Roberts and Rosenthal \cite{ref-journal-RR}.  A precise formulation of ``Theorem \ref{th1.1}.1''---again, in a restricted form, rather than in full generality---will be included in Remark \ref{rm4.7} in Section \ref{sc4} later on.

In fact the purpose of this paper is to provide, in a ``leisurely'' manner, (i)~a review of relevant terminology and background mathematics, (ii)~a review of a precise formulation of ``Theorem \ref{th1.1}.1'' (in a restricted form) and certain other related statements involving strictly stationary, reversible Markov chains, and (iii)~a review of the arguments by which such statements are proved.

It is well known that the ``geometric ergodicity'' and ``$\cL^2$ spectral gap'' conditions in ``Theorem \ref{th1.1}.1'' can each be formulated in terms of the dependence coefficients associated with certain ``strong mixing conditions''. See for example the papers of Longla and Peligrad \cite{ref-journal-LP}, Longla \cite{ref-journal-Longla13}, and Longla \cite{ref-journal-Longla14}. Accordingly, the terminology and techniques associated with those mixing conditions and their dependence coefficients will provide the primary ``vehicle'' for the exposition in this paper. Those mixing conditions and their dependence coefficients will be reviewed in Section \ref{sc2}.

The ``main mathematical framework'' for this exposition will be set up in Section \ref{sc3}. It will actually involve strictly stationary Markov chains that are reversible but \textit{not}\ necessarily ``irreducible'' (in the sense related to ``Harris recurrence''). In the latter part of Section \ref{sc3}, the review will be ``crystallized'' in the form of three technical statements that are given together in Proposition \ref{pr3.5}.  A review of the proofs of those three statements will be given respectively in Sections \ref{sc5}, \ref{sc7}, and \ref{sc6} (in that order).

Section \ref{sc4} will start (through Remark \ref{rm4.5}) with a brief review of basic features of strictly stationary Markov chains (reversible or not) that are ``irreducible'' (in the sense related to ``Harris recurrence''), and then review (in Remark \ref{rm4.7}) a precise formulation (in a restricted form) of ``Theorem \ref{th1.1}.1'', as part of a broader exposition of related material in Corollary \ref{co4.6} and Remarks \ref{rm4.8}--\ref{rm4.11}.

\section{Preliminaries, including some mixing conditions}\label{sc2}
This section will start by laying out some basic notations and conventions that will be used in the rest of this paper. Then the rest of this section will be devoted to a review of three relevant mixing conditions.

\begin{notations}\label{no2.1} In this paper, the following notations will be used:

(A) The following standard symbols will be used:

\noindent $\R$ denotes the set of all real numbers;

\noindent $\cR$ denotes the Borel $\sigma$-field on $\R$;

\noindent $\Z$ denotes the set of all integers; and

\noindent $\N$ denotes the set of all positive integers.

\noindent The usual notations such as $\R^\Z$ and $\cR^\Z$ will be used for Cartesian products and product $\sigma$-fields.

(B) If $(a_1,a_3,a_3,\dots)$ is a sequence of nonnegative real numbers and $(b_1,b_2,b_3,\dots)$ is a sequence of positive numbers, then the notation ``$a_n \ll b_n$ as $n\to\infty$'' means that $a_n = \cO(b_n)$ as $n\to\infty$, that is, $\limsup_{n\to\infty} (a_n/b_n)<\infty$.

(C) If $(a_1,a_2,a_3,\dots)$ is a sequence of nonnegative real numbers, then the notation
\[
a_n \longrightarrow 0\,\,\mbox{at least exponentially fast as $n\to\infty$}
\]
means that there exists a number $r$ satisfying $0<r<1$ such that $a_n \ll r^n$ as $n\to\infty$.

(D) For a given nonnegative integer $n$, define the positive integer
\begin{equation}\label{eq2.1.1}
\doub(n) := 2^n.
\end{equation}
The letters ``$\doub$'' stand for ``doubling''. When an integer of the form $2^n$ $(n\in\{0,1,2,\dots\})$ occurs in a subscript or exponent, it will be written as $\doub(n)$ for typographical convenience.

(E) We shall follow the possibly old-fashioned convention that for two given sets $A$ and $B$, the notation $A\subset B$ simply means that any element of $A$ is also an element of $B$. In particular, $A=B$ (that is, the sets are identical) if and only if $A\subset B$ and $B\subset A$.
\end{notations}

\begin{conventions}\label{cn2.2} Throughout this paper, the following conventions will be used:

(A) The setting for the work in this paper is a probability space $(\Omega, \cF,P)$, rich enough to accommodate all random variables declared.  All random variables in this paper are defined on that probability space.

(B) Except where specified otherwise, the random variables in this paper are assumed to be ``$S$-valued'', where $(S,\cS)$ is an unspecified measurable space. That is (see (A) above), an ``$S$-valued random variable'' is a function $\zeta:\Omega \to S$ such that for every set $A\in \cS$, the set $\{\omega \in \Omega: \zeta (\omega) \in A\}$ is a member of the $\sigma$-field $\cF$.

(B*) In Section \ref{sc4}, the random variables will be real-valued, (that is, with $(S,\cS) = (\R,\cR))$. In Example \ref{ex2.8} in Section \ref{sc2}, the random variables in a certain Markov chain will take their values in the set $\{1,2,3,4\}$. Elsewhere in this paper, there will be many \textit{real}-valued random variables of the form $g(Y)$ where $Y$ is an $S$-valued random variable and $g:S\to\R$ is an $\cS/\cR$-measurable function---that is, for every $B\in \cR$, the set $\{s\in S: g(s) \in B\}$ is a member of the $\sigma$-field $\cS$.

(C) All (``two-sided'') sequences $(X_k$, $k\in \Z)$ of random variables (typically $S$-valued) considered in this paper, are strictly stationary.

(D) In particular, all Markov chains $(X_k$, $k\in \Z)$ considered in this paper are strictly stationary. Except when specified otherwise, the Markov chains will have ``state space $S$''---that is, the random variables $X_k$ will be $S$-valued (see (B) above).

(E) Refer to (A) and (B) again. For any $S$-valued random variable $\zeta$ on $(\Omega, \cF,P)$, let $\sigma(\zeta)$ denote the $\sigma$-field on $\Omega$ that is generated by~$\zeta$. By a well known generalization of Billingsley [\cite{ref-journal-Bill}, Theorem 20.1(i)], $\sigma(\zeta)$ consists precisely of the events $\{\zeta\in A\}$ where $A\in S$.

(F) Refer to (E) above. For any given family $(X_i, i\in I)$ of $S$-valued random variables, where $I$ is a nonempty index set, the notation $\sigma(X_i, i\in I)$ means the $\sigma$-field on $\Omega$ that is generated by this family. This is the smallest $\sigma$-field in which for each $i\in I$, $\sigma(X_i)$ is a sub-$\sigma$-field. That is, it is the smallest $\sigma$-field that contains as members all of the events $\{X_i\in A\}$ where $i\in I$ and $A\in \cS$.

(G) Refer to the (unspecified) measurable space $(S,\cS)$ in (B) again. For any set $A\in \cS$, the indicator function of $A$ (on $S$) will be denoted $I_A$---that is, the function $I_A: S \to\{0,1\}$ such that $I_A(s) = 1$ for $s\in A$ and $I_A(s) =0$ for $s\in S-A$.
\end{conventions}

\begin{definition}[Three measures of dependence.]\label{df2.3} Refer to Convention \ref{cn2.2}(A).

Suppose $\cA$ and $\cB$ are any two $\sigma$-fields $\subset \cF$.

Define the measure of dependence
\begin{equation}\label{eq2.3.1}
\alpha(\cA,\cB) := \sup_{A\in \cA, B\in \cB} |P(A\cap B)-P(A) P(B)|.
\end{equation}
Next, define the ``maximal correlation coefficient''
\begin{equation}
\label{eq2.3.2}
\rho(\cA,\cB) := \sup | \corr(Y,Z)|
\end{equation}
where the supremum is taken over all pairs of real-valued square-integrable random variables $Y$ and $Z$ such that $Y$ is $\cA$-measurable and $Z$ is $\cB$-measurable. Finally, define the measure of dependence
\begin{equation}\label{eq2.3.3}
\beta(\cA,\cB):= \sup \frac{1}{2} \sum^I_{i=1}\sum^J_{j=1} |P(A_i\cap B_j) - P(A_i)P(B_j)|
\end{equation}
where the supremum is taken over all pairs of finite partitions $\{A_1,A_2,\dots,A_I\}$ and $\{B_1,B_2,\dots,B_J\}$ of $\Omega$ such that $A_i\in \cA$ for each $i$ and $B_j\in \cB$ for each~$j$.
\end{definition}

\begin{remark}\label{rm2.4}
Suppose $\cA$ and $\cB$ are any two $\sigma$-fields $\subset \cF$.

(A) The following inequalities are elementary and well known:
\begin{equation}\label{eq2.4.1}
0\le 4\alpha(\cA,\cB) \le \rho(\cA,\cB) \le 1; \,\,\mbox{and}
\end{equation}
\begin{equation}\label{eq2.4.2}
0\le 2\alpha(\cA,\cB) \le \beta(\cA,\cB) \le 1.
\end{equation}
(See e.g.\ [\cite{ref-journal-Bradley2007}, Vol. 1, Proposition 3.11].)  The quantities $\alpha(\cA,\cB)$, $\rho(\cA,\cB)$, and $\beta(\cA,\cB)$ are all equal to $0$ if the $\sigma$-fields $\cA$ and $\cB$ are independent, and are all positive otherwise.

(B) By well known elementary arguments, the maximal correlation coefficient $\rho(\cA,\cB)$ has the following properties: First,
\begin{equation}\label{eq2.4.3}
\rho(\cA,\cB) = \sup \frac{\| E(Y|\cB)\|_2}{\|Y\|_2}
\end{equation}
where the supremum is taken over all real-valued, square-integrable, $\cA$-measurable random variables $Y$ such that $EY=0$. (In (\ref{eq2.4.3}) when necessary, interpret $0/0 := 0$.) Second,
\begin{equation}\label{eq2.4.4}
\rho(\cA,\cB) = \sup E(UV)
\end{equation}
where the supremum is taken over all pairs of real-valued, simple random variables $U$ and $V$ such that $U$ is $\cA$-measurable, $V$ is $\cB$-measurable, $EU = EV=0$, and $E(U^2) \le 1$ and $E(V^2) \le 1$.  In (\ref{eq2.4.4}), one might typically stipulate the equalities $E(U^2) = E(V^2) =1$ instead of the inequalities $E(U^2) \le 1$ and $E(V^2) \le 1$; however, that trivially does not affect the supremum there. Also in (\ref{eq2.4.4}), one might typically use $|E(UV)|$ instead of $E(UV)$; but again that trivially does not affect the supremum there, since one can always (say) replace $U$ by~$-U$.
\end{remark}

\begin{definition}[Three mixing condition] \label{df2.5}  Suppose $X:= (X_k$, $k\in \Z)$ is a (not necessarily Markovian) strictly stationary sequence of $S$-valued random variables.

(A) Refer to Convention \ref{cn2.2}(E). For each integer $j$, define the notations $\cF^j_{-\infty} :=\sigma(X_k, k\le j)$ and $\cF^\infty_j:= \sigma(X_k$, $k\ge j)$.

(B) For each positive integer $n$, define the following three ``dependence coefficients'':
\begin{align}
\label{eq2.5.1}
\alpha(n) &= \alpha(X,n) := \alpha(\cF^0_{-\infty}, \cF^\infty_n); \\
\label{eq2.5.2}
\rho(n) &= \rho(X,n) := \rho(\cF^0_{-\infty}, \cF^\infty_n);\,\,\mbox{and}\\
\label{eq2.5.3}
\beta(n) &= \beta(X,n) := \beta(\cF^0_{-\infty}, \cF^\infty_n).
\end{align}

(C) The strictly stationary sequence $X$ is said to satisfy

\noindent ``strong mixing'' (or ``$\alpha$-mixing'') if $\alpha(X,n) \to 0$ as $n\to\infty$;

\noindent ``$\rho$-mixing'' if $\rho(X,n) \to 0$ as $n\to \infty$;

\noindent ``absolute regularity'' (or ``$\beta$-mixing'') if $\beta(X,n) \to 0$ as $n\to\infty$.

The strong mixing ($\alpha$-mixing) condition is due to Rosenblatt \cite{ref-journal-Rosenblatt1956}. The $\rho$-mixing condition is due to Kolmogorov and Rozanov \cite{ref-journal-KolRoz}. (The ``maximal correlation coefficient'' $\rho(\cA,\cB)$ itself, for $\sigma$-fields $\cA$ and $\cB$, was first studied earlier by Hirschfeld \cite{ref-journal-Hirschfeld}, in a statistical context that had no particular connection with ``stochastic processes''.) The absolute regularity condition was first studied by Volkonskii and Rozanov \cite{ref-journal-VolkRoz}, and was attributed there to Kolmogorov.
\end{definition}

\begin{remark} \label{rm2.6}   Suppose $X:= (X_k, k\in \Z)$ is a (not necessarily Markovian) strictly stationary sequence of $S$-valued random variables

(A) Refer to both (A) and (B) (including eqs. (\ref{eq2.5.1})---(\ref{eq2.5.3})) in Definition \ref{df2.5}.

For each positive integer $n$, one  has by strict stationarity that $\alpha(n) = \alpha(\cF^j_{-\infty},\cF^\infty_{j+n})$ for every integer $j$, and the analogous comment holds for $\rho(n)$ and $\beta(n)$ as well.
\medskip

(B) One (trivially) has that $\alpha(1) \ge \alpha(2)\ge \alpha(3)\ge \dots\,$; and the analogous comment holds for the $\rho(n)$'s and for the $\beta(n)$'s.
\medskip

(C) By (\ref{eq2.4.1}) and (\ref{eq2.4.2}), one has that for each positive integer $n$,
\begin{align}
\label{eq2.6.1}
0&\le 4\alpha(n) \le \rho(n) \le 1,\,\,\mbox{and}\\
\label{eq2.6.2}
0 &\le 2\alpha(n) \le \beta(n) \le 1.
\end{align}
By (\ref{eq2.6.1}), $\rho$-mixing implies strong mixing ($\alpha$-mixing); and by (\ref{eq2.6.2}), absolute regularity ($\beta$-mixing) implies strong mixing.
\end{remark}

\begin{remark}[Markov chains] \label{rm2.7}   Suppose now that $X:=(X_k, k\in \Z)$ is a strictly stationary {\em Markov chain}, with state space $S$.

(A) As a consequence of the Markov property, for each positive integer $n$, eqs. (\ref{eq2.5.1})--(\ref{eq2.5.3}) hold in the following augmented forms for the given (strictly stationary) Markov chain $X$:
\begin{align}
\label{eq2.7.1}
\alpha(n) &= \alpha(\cF^0_{-\infty}, \cF^\infty_n) = \alpha(\sigma(X_0),\sigma(X_n));\\
\label{eq2.7.2}
\rho(n) &=\rho(\cF^0_{-\infty}, \cF^\infty_n) = \rho(\sigma(X_0), \sigma(X_n));\\
\label{eq2.7.3}
\beta(n) &= \beta(\cF^0_{-\infty},\cF^\infty_n) = \beta(\sigma(X_0), \sigma(X_n)).
\end{align}
(See e.g.\   [\cite{ref-journal-Bradley2007}, vol. 1, Theorem 7.5].)
\medskip

(B) By strict stationarity and (\ref{eq2.7.1})--(\ref{eq2.7.3}), one has that for any integer $j$ and any positive integer $n$, the (strictly stationary) Markov chain $X$ satisfies $\alpha(n) = \alpha(\sigma(X_j),\sigma(X_{j+n}))$, $\rho(n) = \rho(\sigma(X_j), \sigma(X_{j+n}))$, and $\beta(n) = \beta(\sigma(X_j), \sigma(X_{j+n}))$.
\medskip

(C) By (B) above, for any pair of positive integers $m$ and $n$, by the Markov property and three applications of (\ref{eq2.4.3}), the given (strictly stationary) Markov chain $X$ satisfies the well known inequality
\begin{equation}
\label{eq2.7.4}
\rho(m+n) \le \rho(m) \cdot \rho(n).
\end{equation}
In particular, for any two positive integers $m$ and $n$, $\rho(m(n+1)) \le \rho(mn) \cdot \rho(m)$. Hence by induction, for every positive integer $m$, one has that the given (strictly stationary) Markov chain $X$ satisfies
\begin{equation}
\label{eq2.7.5}
\rho(mn) \le [\rho(m)]^n\,\,\mbox{for every $n\in\N$.}
\end{equation}
In particular (take $m=1$),
\begin{equation}\label{eq2.7.6}
\rho(n) \le [\rho(1)]^n\,\,\mbox{for every $n\in \N$.}
\end{equation}

(D) By (\ref{eq2.7.5}), for the given (strictly stationary) Markov chain $X:= (X_k, k\in \Z)$, the following three conditions are equivalent:

(i) there exists $m\ge 1$ such that $\rho(m) <1$;

(ii) $X$ is $\rho$-mixing;

(iii) $\rho(n) \to 0$ at least exponentially fast as $n\to \infty$.
\end{remark}

\begin{example} \label{ex2.8} In (\ref{eq2.7.4}) and (\ref{eq2.7.5}), equality sometimes fails to  hold. Here is a quick review of a well known strictly stationary, 4-state, 1-dependent Markov chain, satisfying
\begin{equation}
\label{eq2.8.1}
\rho(1)=1 \quad\mbox{and}\quad \rho(2) =0.
\end{equation}

Let $X:= (X_k,k\in \Z)$ be a strictly stationary Markov chain with state space $S:= \{1,2,3,4\}$, with (invariant) marginal distribution $\mu$ of $X_0$ given by $\mu(\{s\}) = P(X_0 =s) = 1/4$ for $s\in S$, and with one-step transition probabilities $p(i,j) = P(X_1 = j|X_0=i)$ for $i,j\in S$ (compatible with the marginal distribution $\mu$) given by
\begin{equation}
\label{eq2.8.2}
\begin{array}{c}
p(i,j) = 1/2\,\,\mbox{for}\,\,(i,j) \in \{(1,1),(1,2),(2,3), (2,4), (3,1), (3,2),(4,3),(4,4)\}
\quad\mbox{and}\\
\quad\\
p(i,j) =0\,\,\mbox{for all other ordered pairs}\,\,(i,j) \in S\times S.
\end{array}
\end{equation}

By a simple argument, the events $\{X_0=1$ or $3\}$ and $\{X_1=1$ or $2\}$ are identical modulo sets of probability $0$, and their probability is $1/2$. Hence by a trivial calculation,
\[
\corr(I_{\{1,3\}}(X_0),I_{\{1,2\}}(X_1))=1.
\]
Hence the first equality in (\ref{eq2.8.1}) holds.

With (matrix) multiplication of the $4\times 4$ one-step transition probability matrix (given implicitly) in (\ref{eq2.8.2}) with itself, one sees that the random variable $X_2$ is independent of $X_0$. Hence by (\ref{eq2.7.2}), the second equality in (\ref{eq2.8.1}) holds.

This Markov chain $X$ is not ``reversible'' --- note that since $p(2,1)=0$ and $p(1,2) = 1/2$ by (\ref{eq2.8.2}), it follows that $P(\{X_0 =2\}\cap \{X_1 =1\})=0$ but $P(\{X_0=2\} \cap \{X_{-1} =1\}) = 1/8 \not= 0.$ However, the distributions of the random vectors $(X_0,X_m)$ and $(X_0, X_{-m})$ (or by stationarity $(X_m,X_0))$ for $m\ge2$ (but not $m=1$) are all the same --- the product measure $\mu\times\mu$ where $\mu$ is the marginal distribution.
\end{example}

\section{Strictly stationary, reversible Markov chains}
\label{sc3}
In this section, there is absolutely no assumption of ``irreducibility'' (in the sense related to ``Harris recurrence'').

This section will start with a formal repetition of the definition (in the relevant context) of the term ``reversible''.

\begin{definition}
\label{df3.1}
Suppose $X :=(X_k$, $k\in \Z)$ is a strictly stationary Markov chain with state space $(S,\cS)$ (a measurable space). This Markov chain $X$ is ``reversible'' if the distribution (on $(S^\Z, \cS^\Z))$ of the ``time-reversed'' Markov chain $(X_{-k}, k\in\Z)$ is the same as that of the Markov chain $X$ itself.
\end{definition}

\begin{remark}
\label{rm3.2}
(A) It is well known that a given strictly stationary Markov chain $X:= (X_k$, $k\in \Z)$ with state space $(S,\cS)$ is reversible if and only if the distribution (on $(S^2,\cS^2))$ of the random vector $(X_0,X_1)$ is the same as that of the random vector $(X_1,X_0)$.

(B) Of course, if a given strictly stationary Markov chain $X:=(X_k,k \in \Z)$ is reversible, then for each positive integer $m$, the distribution (on $(S^2,\cS^2))$ of the random vector $(X_0,X_m)$ is the same as that of the random vector $(X_m,X_0)$.

(C) For $m\ge 2$, a ``converse'' of (B) is false.  See Example \ref{ex2.8}.

(D) The main ``point of reference'' for the entire exposition in this paper is the next theorem, practically all of it well known, at least in spirit.
\end{remark}

\begin{theorem}
\label{th3.3}
Suppose $X:= (X_k$, $k\in\Z)$ is a strictly stationary, reversible Markov chain with state space $(S,\cS)$.
Let $\mu$ denote the (marginal) distribution (on $(S, \cS))$ of the random variable $X_0$.

Then the following seven conditions (R1), (R2), (R3), (R4), (A1), (A2), (A3) are equivalent:

\noindent (R1) $\rho(X,1)<1$ (the ``$\cL^2$ spectral gap'' condition).

\noindent (R2) The Markov chain $X$ is $\rho$-mixing

\noindent (R3) $\rho(X,n) \to 0$ at least exponentially fast as $n\to \infty$.

\noindent (R4) There exists a number $r\in [0,1)$ such that $\rho(X,n) = r^n$ for all $n\in \N$.

\noindent (A1) $\alpha(X,n) \to 0$ at least exponentially fast as $n\to \infty$.

\noindent (A2) There exists a number $r\in(0,1)$ such that for every pair of sets $A\in \cS$ and $B\in \cS$, one has (see (\ref{eq2.1.1})) that
\begin{equation}
\label{eq3.3.1}
|P(\{X_0 \in A\} \cap \{X_{\doub(n)} \in B\}) - \mu(A)\mu(B)| \ll r^{\doub(n)}\,\,\mbox{as $n\to \infty$.}
\end{equation}

\noindent (A3) For every set $A\in \cS$, there exists a number $c_A\in (0,1)$ such that (see (\ref{eq2.1.1}))
\begin{equation}
\label{eq3.3.2}
|P(\{X_0 \in A\} \cap \{X_{\doub(n)} \in A\})
 - [\mu(A)]^2| \ll c_A^{\doub(n)}\,\,\mbox{as $n\to \infty$.}
\end{equation}
\end{theorem}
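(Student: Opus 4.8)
The strategy is to prove the seven equivalences by establishing a cycle of implications, exploiting the submultiplicativity of $\rho$ from Remark~\ref{rm2.7}(C)--(D) to get the "$\rho$-side'' for free, and then using reversibility to close the gap between the "$\alpha$-side'' and the "$\rho$-side.'' First I would dispose of the equivalences among (R1), (R2), (R3): these hold for \emph{any} strictly stationary Markov chain (reversibility not needed) and are exactly Remark~\ref{rm2.7}(D), via inequality (\ref{eq2.7.6}), $\rho(n) \le [\rho(1)]^n$. For (R4), note that (R4)$\Rightarrow$(R3) is trivial, and (R4)$\Rightarrow$(R1) is trivial (take $n=1$); the substantive direction is (R1)$\Rightarrow$(R4). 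This is where reversibility first enters in an essential way: reversibility makes the operator $Y \mapsto E(Y\mid\sigma(X_1))$ (on the mean-zero subspace of $L^2(\mu)$) \emph{self-adjoint}, so that its $n$-step iterate corresponds to $Y \mapsto E(Y\mid\sigma(X_n))$, and by the spectral theorem / power-iteration identity for self-adjoint operators one gets $\rho(n) = \|T^n\| = \|T\|^n = [\rho(1)]^n$ exactly. Concretely, using (\ref{eq2.4.3}) and self-adjointness one shows $\rho(2n) = [\rho(n)]^2$ for all $n$ (the operator norm of $T^*T = T^2$ equals $\|T\|^2$), and combined with the trivial $\rho(2n)\le \rho(2n-1)\le \dots$ chain from Remark~\ref{rm2.6}(B) and submultiplicativity, this forces $\rho(n)=[\rho(1)]^n$ for all $n$. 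Setting $r:=\rho(1)$, if $r<1$ then (R4) holds with this $r$; and if $r=1$ (so we are \emph{not} in case (R1)) there is nothing to prove for the (R1)$\Rightarrow$(R4) implication.

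Next I would handle the $\alpha$-conditions. The inclusions (A2)$\Rightarrow$(A3) is immediate (take $B=A$ and $c_A := r$), and (A1)$\Rightarrow$(A2) is immediate from $\alpha(n)=\alpha(\sigma(X_0),\sigma(X_n))$ together with the definition (\ref{eq2.3.1}) of $\alpha$ applied to the single pair of sets (take $r$ from the "at least exponentially fast'' in (A1)). The implication (R1)/(R2)/(R3)$\Rightarrow$(A1) follows from (\ref{eq2.6.1}), $4\alpha(n)\le\rho(n)$, since $\rho(n)\to 0$ exponentially fast. So the entire theorem reduces to one genuinely hard implication: \textbf{(A3)$\Rightarrow$(R1)}, i.e., from the merely \emph{qualitative} hypothesis "for each set $A$, the diagonal two-point correlation decays exponentially along the sparse subsequence $\doub(n)=2^n$, at some rate $c_A$ depending on $A$,'' deduce the \emph{uniform} spectral gap $\rho(1)<1$.

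The main obstacle, then, is (A3)$\Rightarrow$(R1), and I expect this to require the bulk of the work (it is presumably what Sections~\ref{sc5}, \ref{sc7}, \ref{sc6} and Proposition~\ref{pr3.5} are set up to supply). My plan for it: argue by contraposition, assuming $\rho(1)=1$, hence $\rho(n)=1$ for all $n$ (by the (R1)$\Rightarrow$(R4) analysis above, which only used reversibility, not (A3)). Using reversibility and the self-adjoint operator $T$ on the mean-zero part $H$ of $L^2(S,\cS,\mu)$, $\|T\|=1$ means $1$ (or $-1$) is in the spectrum of $T$; replacing $T$ by $T^2$ (which corresponds to the two-step chain, still reversible, with transition operator a positive self-adjoint contraction) we may assume $1\in\mathrm{spec}(T^2)$, i.e. there is a sequence of unit vectors $Y_j\in H$ with $\|T^2 Y_j - Y_j\|\to 0$ — an "approximate invariant function'' along the even chain. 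The task is to convert such an approximate eigenfunction into a \emph{single set} $A\in\cS$ whose diagonal correlations $P(X_0\in A, X_{2^n}\in A) - \mu(A)^2$ fail to decay exponentially at any rate — contradicting (A3). This is the delicate measure-theoretic step: one needs to pass from $L^2$ approximate-invariance to a genuine (or nearly genuine) invariant set via a limiting/truncation argument (e.g. truncating $Y_j$, applying a Portmanteau/level-set argument, and using that $P(X_0\in A, X_m\in A)-\mu(A)^2 = \mathrm{Cov}(I_A(X_0), I_A(X_m)) = \langle (I_A - \mu(A))\,,\, T^m (I_A-\mu(A))\rangle$ under reversibility), and then observe that for such $A$ this inner product stays bounded away from $0$ along $m=2^n$, so it certainly is not $\ll c^{2^n}$ for any $c<1$. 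A clean way to organize this is: reversibility plus $\rho(1)=1$ yields, for every $\varepsilon>0$, a set $A$ with $0<\mu(A)<1$ and $\mathrm{Corr}(I_A(X_0),I_A(X_1)) > 1-\varepsilon$; then self-adjointness gives $\mathrm{Corr}(I_A(X_0), I_A(X_{2^n})) \ge$ (something bounded below independent of $n$, e.g.\ via $\langle v, T^{2^n} v\rangle \ge \langle v, T^2 v\rangle^{2^{n-1}}$-type estimates for positive operators, or more carefully via the spectral measure of $v$ concentrated near the top of the spectrum), contradicting the required $c_A^{2^n}$ decay. Assembling the cycle (R1)$\Leftrightarrow$(R2)$\Leftrightarrow$(R3)$\Leftrightarrow$(R4)$\Rightarrow$(A1)$\Rightarrow$(A2)$\Rightarrow$(A3)$\Rightarrow$(R1) then completes the proof.
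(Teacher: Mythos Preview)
Your overall architecture is right and matches the paper's: the trivial chain (R1)$\Rightarrow$(R2)$\Leftrightarrow$(R3)$\Rightarrow$(A1)$\Rightarrow$(A2)$\Rightarrow$(A3) from (\ref{eq3.4.1}), the identity $\rho(n)=[\rho(1)]^n$ from self-adjointness (the paper's Proposition~\ref{pr3.5}(III), Section~\ref{sc6}), and the recognition that (A3)$\Rightarrow$(R1) carries the real content. Your argument for $\rho(n)=[\rho(1)]^n$ via $\rho(2n)=[\rho(n)]^2$ plus submultiplicativity is fine and is essentially how Section~\ref{sc6} proceeds.

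The gap is in your sketch of (A3)$\Rightarrow$(R1). Your ``clean way'' does not work: suppose you manage to produce, for each $\varepsilon>0$, a set $A_\varepsilon$ with $\mathrm{Corr}(I_{A_\varepsilon}(X_0),I_{A_\varepsilon}(X_1))>1-\varepsilon$. Writing $v=J_{A_\varepsilon}/\|J_{A_\varepsilon}\|_2$ and letting $\nu$ be its spectral measure for $T$, Jensen's inequality gives only $\langle v,T^{2^n}v\rangle=\int\lambda^{2^n}d\nu\ge(1-\varepsilon)^{2^n}$, which \emph{is} exponential decay with $c_{A_\varepsilon}\approx 1-\varepsilon$. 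So each individual $A_\varepsilon$ still satisfies (A3); you have produced a family of sets whose rates $c_{A_\varepsilon}$ approach $1$, but (A3) allows the rate to depend on the set. To contradict (A3) you must exhibit a \emph{single} set $A$ with $r(J_A)=1$ (in the paper's notation of Definition~\ref{df7.3}), i.e.\ one whose spectral measure genuinely reaches the top of the spectrum. Your appeal to ``approximate eigenfunctions'' and ``level-set/truncation'' does not bridge this: approximate eigenfunctions change with $\varepsilon$, and nothing you wrote pins down one fixed indicator. (Also, the preliminary claim that $\rho(1)=1$ yields indicator functions---not just general $L^2$ functions---with correlation near $1$ is itself nontrivial and unproved in your sketch.)

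The paper closes exactly this gap, but not by your contrapositive route. It splits the step in two. First (Section~\ref{sc5}, Lemma~\ref{lm5.5}) it shows (A2)$\Rightarrow$(R1): given a uniform rate $r$, approximate a near-optimal $E[g(X_0)h(X_1)]$ by simple functions and use the reversibility inequality $|E[g(X_0)h(X_1)]|\le\bigl(E[g(X_0)g(X_{2^n})]\bigr)^{1/2^n}$ (eq.~(\ref{eq5.3.9})) together with (A2) on the finitely many atoms to force $\rho(1)\le r$. Second (Section~\ref{sc7}) it shows (A3)$\Rightarrow$(A2): assuming (A3), it proves $R(S):=\sup_{A\in\cS}r(J_A)<1$. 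The proof of $R(S)<1$ (Lemma~\ref{lm7.16}) is precisely the construction you are missing: if $R(S)=1$, one extracts (Lemmas~\ref{lm7.14}--\ref{lm7.15}) a sequence of \emph{pairwise disjoint} sets $A_k$ with $r(J_{A_k})\to 1$, then chooses their measures to shrink fast enough that the single set $B=\bigcup_k A_k$ has $r(J_B)=1$, contradicting (A3). That combinatorial packing of infinitely many ``near-top'' sets into one set is the idea your proposal lacks.
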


\begin{remark}
\label{rm3.4}
(A) In Theorem \ref{th3.3}, the inclusion of both conditions (R3) and (R4) is clumsy, but will help facilitate a distinction between ``trivial implications that do not involve reversibility'' (in eq.\ (\ref{eq3.4.1}) below) and ``nontrivial implications that do involve reversibility'' (in Proposition \ref{pr3.5}(I)(II)(III) below).

(B) In Theorem \ref{th3.3}, in the labels ((R3),(A1), etc.) for the seven conditions, the letters R and A are intended to ``match'' respectively the Greek letters $\rho$ and $\alpha$ (for the $\rho$-mixing and $\alpha$-mixing conditions) involved (directly or indirectly) in those conditions. (In Remark \ref{rm4.4} in the next section, a similar convention is followed with respect to the letters B and $\beta$ (in connection with the $\beta$-mixing condition).

(C) For a given strictly stationary Markov chain $X:=(X_k, k\in \Z)$ (reversible or not), one trivially has (see (\ref{eq2.6.1}) and Remark \ref{rm2.7}(D)) the implications
\begin{equation}
\label{eq3.4.1}
(R1) \Longrightarrow [(R2) \Longleftrightarrow (R3)] \Longrightarrow (A1) \Longrightarrow (A2) \Longrightarrow (A3).
\end{equation}
(In (\ref{eq3.4.1}), the brackets are intended to highlight the ``trivial equivalence'' of conditions (R2) and (R3).)

(D) As explained in Remark \ref{rm3.6}(A) below, one has that eq.\ (\ref{eq3.4.1}) above and Proposition \ref{pr3.5} below together imply all of Theorem \ref{th3.3}.
\end{remark}

\begin{proposition}\label{pr3.5}
Under the hypothesis (the entire first paragraph, \textit{including reversibility)}
of Theorem \ref{th3.3}, the following three statements hold:

(I) In the notations of Theorem \ref{th3.3}, if condition (A2) holds, then condition (R1) (the ``$\cL^2$ spectral gap'' condition) holds.

(II) In the notations of Theorem \ref{th3.3}, if condition (A3) holds, then condition (A2) holds.

(III) (cf.\ Longla \cite{ref-journal-Longla14}, Lemma 2.1).   Regardless of the value (in $[0,1]$) of $\rho(X,1)$, one has that for every $n\in \N$, $\rho(X,n)=[\rho(X,1)]^n$.
\end{proposition}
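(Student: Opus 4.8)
The plan is to establish the identity $\rho(X,n) = [\rho(X,1)]^n$ by proving both inequalities. The inequality $\rho(X,n) \le [\rho(X,1)]^n$ is already available for \emph{any} strictly stationary Markov chain: it is exactly (\ref{eq2.7.6}) in Remark \ref{rm2.7}(C), obtained from the submultiplicativity (\ref{eq2.7.4}) by induction. So the real content is the reverse inequality $\rho(X,n) \ge [\rho(X,1)]^n$, and this is where reversibility must be used, since Example \ref{ex2.8} shows the identity can fail without it (there $\rho(1)=1$ but $\rho(2)=0$).

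The key idea is to exploit the self-adjointness on $\cL^2(\mu)$ of the transition operator associated with a reversible chain. Concretely, let $H$ denote the Hilbert space of real-valued, square-integrable, mean-zero functions of $X_0$ (equivalently, the mean-zero part of $\cL^2(S,\cS,\mu)$), and define the operator $T$ on $H$ by $Tf(X_0) := E(f(X_1)\mid X_0)$. Reversibility (via Remark \ref{rm3.2}(A): the law of $(X_0,X_1)$ equals that of $(X_1,X_0)$) gives $E[f(X_0)g(X_1)] = E[g(X_0)f(X_1)]$ for all $f,g$ in $H$, hence $\langle Tf,g\rangle = \langle f,Tg\rangle$, i.e. $T$ is self-adjoint. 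By the Markov property and the characterization (\ref{eq2.4.3}), $\rho(X,1) = \|T\|$, the operator norm of $T$. More generally, iterating the Markov property, $E(f(X_n)\mid X_0) = T^n f(X_0)$, and again by (\ref{eq2.4.3}) applied to $\sigma(X_0)$ and $\sigma(X_n)$ together with Remark \ref{rm2.7}(A)–(B), we get $\rho(X,n) = \|T^n\|$. Now the spectral theorem for bounded self-adjoint operators (or, more elementarily, the identity $\|T^2\| = \|T\|^2$ valid for any self-adjoint operator, combined with the submultiplicativity already in hand) yields $\|T^n\| = \|T\|^n$ for every $n$, which is precisely $\rho(X,n) = [\rho(X,1)]^n$.

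For an exposition aiming at a "generous absence of sophistication," I would avoid invoking the full spectral theorem and instead argue as follows. For self-adjoint $T$ one has $\|T^2\| = \sup_{\|f\|\le 1}\langle T^2 f,f\rangle = \sup_{\|f\|\le 1}\|Tf\|^2 = \|T\|^2$; iterating along powers of two gives $\|T^{\doub(k)}\| = \|T\|^{\doub(k)}$. Then for arbitrary $n$, pick $k$ with $\doub(k)\ge n$ and write $\doub(k) = n + (\doub(k)-n)$; submultiplicativity gives $\|T\|^{\doub(k)} = \|T^{\doub(k)}\| \le \|T^n\|\cdot\|T\|^{\doub(k)-n}$, so $\|T^n\| \ge \|T\|^n$ (handling the degenerate case $\|T\|=0$ separately, where everything is trivially $0$). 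Combined with $\|T^n\|\le\|T\|^n$ from submultiplicativity, this finishes it. The only genuine obstacle I anticipate is the careful justification that $\rho(X,n)$ really equals $\|T^n\|$ rather than merely being bounded by it — this needs the Markov-chain reduction of $\rho$ from $\sigma$-fields down to $\sigma(X_0),\sigma(X_n)$ (Remark \ref{rm2.7}(A)) and a clean statement that the supremum in (\ref{eq2.4.3}), restricted to functions of $X_0$, is attained in the limit by the operator norm; this is standard but should be spelled out, together with the measure-theoretic identification of mean-zero $\cA$-measurable $\cL^2$ functions with mean-zero elements of $\cL^2(\mu)$ when $\cA = \sigma(X_0)$. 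The self-adjointness step itself is immediate once reversibility is phrased as in Remark \ref{rm3.2}(A), so I do not expect difficulty there.
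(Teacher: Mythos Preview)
Your proposal is correct and follows essentially the same route as the paper's proof in Section~\ref{sc6}: both establish the equality first along powers of two (your $\|T^{\doub(k)}\|=\|T\|^{\doub(k)}$ is exactly what the paper obtains via eq.~(\ref{eq5.3.9}) and Lemma~2 in Section~\ref{sc6}), and then both use the identical sandwich argument with submultiplicativity (\ref{eq2.7.4})--(\ref{eq2.7.6}) to pass to general $n$. The only difference is packaging: the paper deliberately avoids operator-theoretic language, carrying out the self-adjointness computation $\langle T^{2m}f,f\rangle=\|T^m f\|^2$ at the level of explicit functions in $\cL^2_{ub0}(\mu)$ (Lemma~\ref{lm5.2} and Remark~\ref{rm5.3}), whereas you name the operator $T$ and invoke $\|T^2\|=\|T\|^2$ directly.
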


Proposition \ref{pr3.5}(I) and its proof (as reviewed in Section \ref{sc5}) are well known, at least implicitly. In ``Theorem \ref{th1.1}.1'', in certain natural proofs that ``geometric ergodicity'' implies the ``$\cL^2$ spectral gap'' condition (R1) (in Theorem \ref{th3.3}), once certain minor ``frills'' are stripped away, the argument just boils down to a (by now routine) proof that condition (A2) (or some ``close cousin'' of it) implies (R1).

Proposition \ref{pr3.5}(II) is a version, or at least a ``close cousin'', of certain formulations already in the literature. In particular, condition (A3) (in Proposition \ref{pr3.5}(II) and Theorem \ref{th3.3}) is intended to serve as a ``quasi-analog'' of condition (B3) in Remark \ref{rm4.4} in the next section. The proof of Proposition \ref{pr3.5}(II) that is reviewed in Section \ref{sc7}, involves just routine elementary arguments.

Proposition \ref{pr3.5}(III) is taken essentially from Longla [\cite{ref-journal-Longla14}, Lemma 2.1]. Its formulation there was formally in a context involving certain types of copulas, but it easily extends beyond that context.  The proof of it that is reviewed in Section \ref{sc6}, has a spirit somewhat like that of the argument given by Longla, but differs in some technical details in order to take advantage of material in Section \ref{sc5} (in the review of a proof of Proposition \ref{pr3.5}(I)).

As noted above, proofs of Statements (I), (II), and (III) in Proposition \ref{pr3.5} will be reviewed respectively in (note the ordering) Sections \ref{sc5}, \ref{sc7}, and \ref{sc6}.

\begin{remark}
\label{rm3.6}
(A) In the context of Theorem \ref{th3.3} (including reversibility), the following three comments hold:

\noindent (i) Eq.\ (\ref{eq3.4.1}) and Proposition \ref{pr3.5}(I) together imply the equivalence of conditions (R1), (R2), (R3), (A1), and (A2).

\noindent (ii) Then eq.\ (\ref{eq3.4.1}) (its last ``implication'') and Proposition \ref{pr3.5}(II)  together imply that condition (A3) is equivalent to (A2)---and hence to the other conditions in (i) above (that is, to the other conditions in (\ref{eq3.4.1})).

\noindent (iii) Then as a trivial by-product of Proposition \ref{pr3.5}(III), condition (R4) is equivalent to (R1) --- and hence to all of the other conditions in the conclusion of Theorem \ref{th3.3}.

\textit{ Thus (\ref{eq3.4.1}) and Proposition \ref{pr3.5} together imply all of Theorem \ref{th3.3}.}

(B) In Proposition \ref{pr3.5}, at least in Statements (I) and (III), the assumption of ``reversibility'' cannot be altogether omitted. With regard to Statement (I), see Remark \ref{rm4.9} and Remark \ref{rm4.5}(B) in the next section. With regard to Statement (III), recall the ``non-reversible'' strictly stationary Markov chain in Example \ref{ex2.8}, satisfying $\rho(1) =1$ and $\rho(n) =0 \not= [\rho(1)]^n$ for all $n\ge 2$.

(It does not seem clear whether or not Statement (II) in Proposition \ref{pr3.5} would still hold if the assumption of ``reversibility'' is omitted. The proof of Statement (II) given in Section \ref{sc7} employs the assumption of reversibility.)

(C) Corollary \ref{co5.7} at the end of Section \ref{sc5} will provide some ``matching of rates'' information with regard to the equivalent conditions in Theorem \ref{th3.3}. It is an ``analog'' of, and is in spirit ``contained'' in, a ``matching of rates'' result from Roberts and Tweedie [\cite{ref-journal-RT}, Theorem 3], building on Roberts and Rosenthal [\cite{ref-journal-RR}, proof of Theorem 2.1], with regard to ``Theorem \ref{th1.1}.1''.
\end{remark}

\begin{remark}\label{rm3.7}  If a given strictly stationary (not necessarily reversible) Markov chain $X:= (X_k, k\in \Z)$ has finite or countably infinite state space and is strongly mixing (or $\rho$-mixing or absolutely regular), then trivially it is irreducible. With that kept in mind, the following comments below hold. Those comments will be given with some frivolous redundancy (in light of Theorem \ref{th3.3} itself, Corollary \ref{co4.6} in the next section, and some inequalities in Section \ref{sc2}).

(A) In Theorem \ref{th3.3} above and in Corollary \ref{co4.6} in the next section, in the absence of further assumptions, there is no role for the ``$\rho^*$-mixing'' condition---the stronger variant of $\rho$-mixing in which the two index sets are allowed to be ``interlaced'' instead of being restricted to ``past'' and ``future''. The author \cite{ref-journal-Bradley2015} constructed a class of strictly stationary, countable-state, reversible Markov chains $X := (X_k, k\in \Z)$ that

\noindent (i) satisfy strong mixing, $\rho$-mixing, and absolute regularity but

\noindent (ii) fail to satisfy $\rho^*$-mixing.

With regard to (i), in those examples the mixing rates can all be made to be ``bounded above by an arbitrarily fast exponential decay'' (but cannot be ``strictly faster than exponential''). With regard to (ii), those examples satisfy $\rho(\sigma(X_0), \sigma(X_{-n},X_n)) =1$ for every positive integer $n$.

(B) It has long been well known that even for strictly stationary Markov chains that are reversible and irreducible and satisfy $\rho$-mixing, Doeblin's condition does not necessarily hold.

Thus in Theorem \ref{th3.3} above and in Corollary \ref{co4.6} in the next section, in the absence of further assumptions, there is no role for Doeblin's condition. For example the (strictly stationary, countable-state, reversible, $\rho$-mixing, etc.) Markov chains $X:= (X_k, k\in \Z)$ in \cite{ref-journal-Bradley2015} alluded to in (A) above can be constructed in such a way that, without any changes in the properties stated in \cite{ref-journal-Bradley2015} (or in (A) above), the following ``extra property'' holds in an ``elementary, transparent'' manner:

For every positive integer $\ell$ and every $\delta >0$, there exist subsets $A$ and $B$ of the (countably infinite) state space, with $P(X_0\in A)>0$, such that
\begin{equation}\label{eq3.4}
P(X_\ell \in B)<\delta\quad\mbox{and}\quad P(X_\ell \in B \mid X_0 \in A) > 1-\delta.
\end{equation}

\textbf{Justification for (B).} The justification of the assertion in (B) will involve just trivial ``bookkeeping'', but plenty of it. It will be spelled out here.

First note that in Lemma 2.2 in \cite{ref-journal-Bradley2015}, for the given integer $N\ge 3$ in that lemma, one also has the following extra properties: First, for every $\varepsilon \in (0,1/3]$ and every $i \in \{0,1,2,\dots, N\}$,
\begin{equation}\label{eq3.5}
P(Y_0^{(\varepsilon)} = i)>0.
\end{equation}
Second, there exists a number $\varepsilon'_N \in (0,1/3]$ such that for every $\varepsilon \in (0, \varepsilon'_N]$ and every $\ell \in \{0,1,2,\dots,N-1\}$,
\begin{equation}\label{eq3.6}
P(Y_0^{(\varepsilon)} = N-\ell) < 1/N.
\end{equation}
Third, there exists a number $\varepsilon''_N \in (0,1/3]$ such that for every $\varepsilon \in (0, \varepsilon''_N]$ and every $\ell \in \{1,2,\dots, N-1\}$,
\begin{equation}\label{eq3.7}
P(Y_\ell^{(\varepsilon)} = N - \ell\mid Y^{(\varepsilon)}_0 = N) >1 - 1/N.
\end{equation}

To verify the sentence containing (\ref{eq3.5}) above, apply in \cite{ref-journal-Bradley2015} its eq. (2.1) and all four lines of its eq.\ (2.2) in its Definition 2.1. To verify the sentence containing (\ref{eq3.6}) above, note that in \cite{ref-journal-Bradley2015}, in its Definition 2.1, by the second, third, and fourth lines of its eq.\ (2.2),
\[
\lim_{\varepsilon \to 0+}\left[ \max_{\ell \in \{0,1,\dots, N-1\}} \mu_{N,\varepsilon, N-\ell}\right] = \lim_{\varepsilon \to 0+} \left[ \max_{m\in\{1,2,\dots,N\}} \mu_{N,\varepsilon, m}\right] =0.
\]
To verify the sentence containing (\ref{eq3.7}), above, note that in \cite{ref-journal-Bradley2015}, in its Definition 2.1, by the fourth line of its eq.\ (2.6),
\[
\lim_{\varepsilon \to 0+} \left[ \min_{\ell\in \{1,2,\dots,N-1\}} p^{(\ell)}_{N,\varepsilon, N,N-\ell}\right] = \lim_{\varepsilon \to 0+}\left[\min_{i \in \{1,2,\dots,N-1\}} p^{(N-i)}_{N,\varepsilon, N,i}\right] =1.
\]

Next, in \cite{ref-journal-Bradley2015}, applying its Lemma 2.2 as augmented in the sentences containing eqs.\ (\ref{eq3.5}), (\ref{eq3.6}), and (\ref{eq3.7}) above, choose the strictly stationary Markov chains $Z^{(N)}:= (Z^{(N)}_k, k\in \Z)$ for $N\in \{3,4,5,\dots\}$ on page 84 (of \cite{ref-journal-Bradley2015}) in such a way as to have, in addition to all properties stated in lines 6-15 of page 84 (of \cite{ref-journal-Bradley2015}), the following extra properties:

\noindent(i) $P(Z_0^{(N)} =i)>0$ for every $i\in \{0,1,2,\dots,N\}$;

\noindent(ii) $P(Z^{(N)}_0 = N-\ell) <1/N$ for every $\ell \in \{0,1,\dots,N-1\}$; and

\noindent(iii) $P(Z^{(N)}_\ell = N-\ell \mid Z^{(N)}_0 = N)>1 - 1/N$ for every $\ell \in \{1,2,\dots,N-1\}$.
\medskip

Then for every positive integer $\ell$ and every $\delta>0$, one has for any given integer $N>\max\{3,\ell, 1/\delta\}$ that
\[
P(Z^{(N)}_\ell = N-\ell) = P(Z^{(N)}_0 =N-\ell)< 1/N <\delta\,\,\,\mbox{and}
\]
\[
P(Z^{(N)}_\ell = N-\ell \mid Z^{(N)}_0 =0) >1 - 1/N >1-\delta.
\]
Then from eq.\ (3.8) in \cite{ref-journal-Bradley2015} one completes
 the ``elementary, transparent'' verification of the ``extra property'' stated in the sentence containing (\ref{eq3.4}) above.
\end{remark}

\section{Strictly stationary, irreducible Markov chains}\label{sc4}
In this section, there is no assumption of ``reversibility'', except when that assumption is stated explicitly.

Also in this section, to bypass some slight technicalities, the (strictly stationary) Markov chains are assumed to have state space $(R, \cR)$.

The main ``setting'' for this section is as follows:

\begin{context} \label{ctxt4.1}  Suppose $X:= (X_k$, $k\in\Z)$ is a (not necessarily reversible) strictly stationary Markov chain with state space $(R,\cR)$.  Let $\mu$ denote the (marginal) distribution (on $(R,\cR))$ of the random variable $X_0$. Abusing notations slightly, let $P((X_1,X_2,X_3,\dots) \in D\mid X_0 =x)$, for $x\in \R$ and $D\in \cR^\N$, denote a regular conditional distribution of the (``one-sided'') random sequence $(X_1,X_2,X_3,\dots)$ given $X_0$.
\end{context}

(By certain measure-theoretic arguments that will not be reviewed here, it will not matter at all which particular ``regular conditional distribution'' is chosen.)

\begin{definition}
\label{df4.2}
(A) in Context \ref{ctxt4.1}, the (strictly stationary) Markov chain $X$ is ``irreducible'' if there exists a set $A \in \cR$ satisfying $\mu(A) =1$ such that the following holds: For every $x\in A$ and every set $B\in \cR$ such that $\mu(B)>0$, there exists a positive integer $n$ (depending on $x$ and $B$) such that $P(X_n \in B\mid X_0 =x)>0$.

(B) In Context \ref{ctxt4.1}, the (strictly stationary) Markov chain $X$ is ``Harris recurrent'' if there exists a set $A\in \cR$ satisfying $\mu(A) =1$ such that the following holds: For every $x\in A$ and every set $B\in \cR$ such that $\mu(B) >0$, one has that
\[
P(X_n\in B\,\,\mbox{for infinitely many }\,\,n\in \N \mid X_0 =x)=1.
\]
\end{definition}

\begin{remark}\label{rm4.3}
The following information can be found (in different terminologies) in Orey \cite{ref-journal-Orey}, Meyn and Tweedie \cite{ref-journal-MT}, and many other references. A generously detailed exposition of it in the terminology employed here, can be found in [\cite{ref-journal-Bradley2007}, vol.\ 2, Chapter 21].

(A) In Context \ref{ctxt4.1}, for the given (strictly stationary) Markov chain $X:= (X_k$, $k\in \Z)$, the following three conditions (i), (ii), (iii) are equivalent:

\noindent (i) $X$ is irreducible (in the sense of Definition \ref{df4.2}(A));

\noindent (ii) $X$ is Harris recurrent;

\noindent (iii) $X$ is ergodic, and there exists $n\in \N$ such that $\beta(X,n) <1$.

(B) In Context \ref{ctxt4.1}, for the given (strictly stationary) Markov chain $X:= (X_k$, $k\in \Z)$, if $X$ is irreducible (in the sense of Definition \ref{df4.2}(A)) --- that is, if $X$ satisfies the three equivalent conditions (i), (ii), and (iii) in (A) above --- then $X$ has what Orey \cite{ref-journal-Orey}  calls a ``$C$-set'' and Meyn and Tweedie \cite{ref-journal-MT}  and some other references call a ``small set'': a set $C\in \cR$, accompanied by a positive number $t$ and a positive integer $n$, such that

\noindent (i) $\mu(C)>0$, and

\noindent (ii) for every pair of sets $A$, $B\in \cR$ such that $A\subset C$ and $B\subset C$, one has that
\[
P(\{X_0\in A\} \cap \{X_n \in B\}) \ge t\cdot \mu (A) \cdot \mu(B).
\]
\end{remark}

\begin{remark}\label{rm4.4}
In Context \ref{ctxt4.1} (with no assumption of ``reversibility''), the following four conditions (B1), (B2), (B3), and (B4) are equivalent:

\noindent (B1) (``geometric ergodicity'') There exist a set $A\in \cR$ such that $\mu(A) =1$, and Borel functions $G: A\to [0,\infty)$ and $\theta: A\to (0,1)$ such that for every $x\in A$ and every $n\in \N$,
\[
\left[\sup_{B\in \cR} |P(X_n\in B\mid X_0 =x) - \mu(B)|\right] \le G(x) \cdot [\theta(x)]^n.
\]

\noindent (B2) There exist a set $A\in \cR$ such that $\mu(A) =1$, a Borel function $G: A\to [0,\infty)$, and a number $\lambda \in (0,1)$ such that for every $x\in A$ and every $n\in \N$,
\[
\left[ \sup_{B\in \cR} |P(X_n \in B\mid X_0 =x) - \mu (B)|\right] \le G(x) \cdot \lambda^n.
\]

\noindent (B3) There exists (for the Markov chain $X$) a ``$C$-set'' (that is, a ``small set'') $C$ such that
\begin{equation}\label{eq4.4.1}
\left|P(X_n \in C \mid X_0 \in C) - \mu(C)\right| \longrightarrow 0\,\,\mbox{ at least exponentially fast as $n\to \infty$.}
\end{equation}

\noindent (B4) $\beta(X,n) \to 0$ at least exponentially fast as $n\to \infty$.
\end{remark}

The equivalence of (B1), (B2), and (B3) was shown by Nummelin and Tweedie [\cite{ref-journal-NTweed}, Theorem 1]. The equivalence of (B4) with (B1)--(B2)--(B3) was shown by Nummelin and Tuominen [\cite{ref-journal-NTuo}, Theorem 2.1]. (There condition (B4) was formulated in an equivalent form with different terminology.)  Both papers built on insights from earlier papers such as Kendall \cite{ref-journal-Kendall}  and Vere-Jones \cite{ref-journal-VJ}. A generously detailed presentation of the equivalence of those four conditions is given by  [\cite{ref-journal-Bradley2007}, vol.\ 2, Theorems 21.13 and 21.19]. That material will be taken for granted, and not reviewed in detail here in this expository paper.

\begin{remark}\label{rm4.5} (A) In Context \ref{ctxt4.1} (with no assumption of ``reversibility''), the four equivalent conditions (B1)--(B2)--(B3)--(B4) in Remark \ref{rm4.4} imply that the given (strictly stationary) Markov chain $X$ is ``irreducible'' (in the sense of Definition \ref{df4.2}(A)).  (Compare condition (B4) in Remark \ref{rm4.4} with condition (iii) in Remark \ref{rm4.3}(A).)

(B) In Context \ref{ctxt4.1} (with no assumption of ``reversibility''), by (\ref{eq2.6.2}), condition (B4) in Remark \ref{rm4.4} trivially implies condition (A1) (and hence trivially also conditions (A2) and (A3)) in Theorem \ref{th3.3}.

(C) In Context \ref{ctxt4.1}, if the given strictly stationary (not necessarily reversible) Markov chain $X:= (X_k$, $k\in \Z)$ is irreducible (in the sense of Definition \ref{df4.2}(A)) and satisfies condition (R3) in Theorem \ref{th3.3}, then it satisfies condition (B3) (and hence also conditions (B1), (B2), and (B4)) in Remark \ref{rm4.4} as well. The argument is well known and short (see e.g.\ \cite{ref-journal-KM}), and it will be reviewed here:

Applying Remark \ref{rm4.3}(B) and the assumption here of irreducibility, let $C\in \cR$ be a $C$-set (that is, a ``small set'') for the Markov chain $X$. By the assumption here of condition (R3), one has that $|\corr(I_C(X_0), I_C(X_n))|$ $\to 0$ at least exponentially fast as $n\to \infty$. Hence by trivial arithmetic, the same holds for $|\cov (I_C(X_0), \allowbreak I_C(X_n))|$; and by slight further trivial arithmetic, eq.\ (\ref{eq4.4.1}) holds.  Thus condition (B3) holds. That completes the argument.
\medskip

 The next statement, called a ``corollary'', follows from Theorem \ref{th3.3}, Remark \ref{rm4.4}, and Remark \ref{rm4.5}(B)(C).  It is primarily just a list of conditions that are known to be equivalent under certain hypotheses.
\end{remark}

\begin{corollary}\label{co4.6}
In Context \ref{ctxt4.1}, if the given (strictly stationary) Markov chain $X := (X_k, k\in \Z)$ is both reversible and irreducible (in the sense of Definition \ref{df4.2}(A)), then the following eleven conditions are equivalent:

\noindent conditions (R1), (R2), (R3), (R4) in Theorem \ref{th3.3};

\noindent conditions (A1), (A2), (A3) in Theorem \ref{th3.3};

\noindent conditions (B1), (B2), (B3), (B4) in Remark \ref{rm4.4}.
\end{corollary}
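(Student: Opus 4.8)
\textbf{Proof proposal for Corollary \ref{co4.6}.}

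The plan is to assemble this corollary purely by citing the equivalences already established, so the ``proof'' is really just a careful bookkeeping of which earlier statement supplies which implication. First I would invoke Theorem \ref{th3.3}: under the hypothesis that $X$ is reversible (which is part of the hypothesis here), the seven conditions (R1), (R2), (R3), (R4), (A1), (A2), (A3) are already known to be mutually equivalent, with no use of irreducibility. So it remains only to weave the four conditions (B1), (B2), (B3), (B4) of Remark \ref{rm4.4} into that equivalence class, and for this I would use the assumption of irreducibility together with reversibility.

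The second step is to recall from Remark \ref{rm4.4} that (B1), (B2), (B3), (B4) are themselves mutually equivalent in Context \ref{ctxt4.1} (no reversibility needed), so it suffices to link the ``B-block'' to the ``R/A-block'' by a single implication in each direction. For the forward direction---from any B-condition to any R/A-condition---I would use Remark \ref{rm4.5}(B): condition (B4) trivially implies (A1) via the inequality $2\alpha(n)\le\beta(n)$ from (\ref{eq2.6.2}), and (A1) is one of the conditions already shown equivalent to the rest in Theorem \ref{th3.3}. For the reverse direction---from an R/A-condition back to a B-condition---I would use Remark \ref{rm4.5}(C): under irreducibility (which gives a $C$-set via Remark \ref{rm4.3}(B)), condition (R3) implies (B3), by passing from the exponential decay of $|\corr(I_C(X_0),I_C(X_n))|$ to that of $|\cov(I_C(X_0),I_C(X_n))|$ and then to (\ref{eq4.4.1}). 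Since (R3) and (B3) are each inside their respective already-closed equivalence classes, this single implication closes the loop.

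Stringing these together: irreducibility-plus-reversibility gives $[(B1)\Leftrightarrow(B2)\Leftrightarrow(B3)\Leftrightarrow(B4)]$, $[(R1)\Leftrightarrow(R2)\Leftrightarrow(R3)\Leftrightarrow(R4)\Leftrightarrow(A1)\Leftrightarrow(A2)\Leftrightarrow(A3)]$, together with $(B4)\Rightarrow(A1)$ and $(R3)\Rightarrow(B3)$, and therefore all eleven conditions collapse into one equivalence class. I would present this as a short cycle of implications and then remark that, since a cycle through representatives of both blocks has been exhibited, every listed condition implies every other.

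There is essentially no genuine obstacle here---the corollary is, as the authors say, ``primarily just a list of conditions that are known to be equivalent''---and the only thing requiring care is making sure that reversibility is used exactly where it is needed (namely inside Theorem \ref{th3.3}, to obtain (A1)$\Rightarrow$(R1), via Proposition \ref{pr3.5}(I)) and that irreducibility is used exactly where it is needed (namely to produce the $C$-set in Remark \ref{rm4.5}(C), giving (R3)$\Rightarrow$(B3)). The mild point worth flagging in the write-up is that without reversibility the chain of implications (\ref{eq3.4.1}) runs only one way, so it is the reversibility that makes (A2) and (A3) climb back up to (R1); this is the one place where the hypothesis does real work, and I would make that explicit rather than burying it.
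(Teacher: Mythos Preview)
Your proposal is correct and follows essentially the same route as the paper: the corollary is stated as following from Theorem \ref{th3.3}, Remark \ref{rm4.4}, and Remark \ref{rm4.5}(B)(C), which is exactly the assembly you describe---the R/A block from Theorem \ref{th3.3}, the B block from Remark \ref{rm4.4}, and the two linking implications $(B4)\Rightarrow(A1)$ and $(R3)\Rightarrow(B3)$ from Remark \ref{rm4.5}(B)(C). Your additional commentary on precisely where reversibility and irreducibility each enter is accurate and would be a welcome clarification.
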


\begin{remark}\label{rm4.7} \textrm{
''Theorem \ref{th1.1}.1'' (the limited form of it treated in this expository paper) can now be formulated precisely as a particular part of Corollary \ref{co4.6}; it simply says that under the hypothesis of Corollary \ref{co4.6}, the ``geometric ergodicity'' condition (B1) and the ``$\cL^2$ spectral gap'' condition (R1) are equivalent.}

\end{remark}

\begin{remark}\label{rm4.8}
In Corollary \ref{co4.6}, if the assumption of ``irreducibility'' were omitted, then conditions (B1), (B2), (B3), and (B4) would have to be omitted, and one would be left with Theorem \ref{th3.3}. Let us elaborate on that a little.

(A) If a given strictly stationary Markov chain $X:= (X_k, k\in \Z)$ (reversible or not) satisfies $\beta(X,n) =1$ for all $n\in \N$, then it fails to be irreducible. (In Remark \ref{rm4.3}(A), look at condition (iii).)

(B) With the use of (in effect) ``random rotations'', Rosenblatt [\cite{ref-journal-Rosenblatt1971}, p.\ 214, line $-3$ to p.\ 215, line 13] constructed a class of strictly stationary Markov chains $X:= (X_k, k\in \Z)$ that are $\rho$-mixing but satisfy $\beta(X,n) =1$ for all $n\in \N$. It seems clear that some of those examples (the ones whose underlying ``random rotations'' satisfy a certain symmetry condition) are also reversible.

(C) A pair of examples --- in disguise, slight variations on those of Rosenblatt \cite{ref-journal-Rosenblatt1971} cited in (B) above --- was constructed in [\cite{ref-journal-Bradley2007}, vol.\ 1, Examples 7.16 and 7.18]. Those two examples, strictly stationary Markov chains indexed by $\Z$, are reversible and are $\rho$-mixing (and even satisfy $\rho^*$-mixing --- the stronger variant of $\rho$-mixing alluded to in Remark \ref{rm3.7}(A)), and they also satisfy $\beta(n) =1$ for all $n\in \N$.
\end{remark}

[In those two examples, the property of ``reversibility'' was not discussed, but it is easy to see as inherited from the ``building blocks'' (of those examples) --- certain strictly stationary, 2-state Markov chains that are themselves easily seen to be reversible.]

\begin{remark}\label{rm4.9}
In Corollary \ref{co4.6}, if the assumptions of ``reversibility'' were omitted, then parts of the conclusion of that statement would in general become false. Kontoyiannis and Meyn [\cite{ref-journal-KM}, Theorem 1.4 and Proposition 3.1] discussed two known examples of strictly stationary, countable-state, irreducible, non-reversible Markov chains $X:= (X_k, k\in \Z)$ that

\noindent (i) satisfy the conditions (B1)--(B2)--(B3)--(B4) in Remark \ref{rm4.4}, but

\noindent (ii) \textbf{fail} to be $\rho$-mixing (i.e. fail to satisfy conditions (R1), (R2), (R3), and (R4) in Theorem \ref{th3.3}).

\noindent One of those examples was the underlying Markov chain in a construction of H\"aggstr\"om [\cite{ref-journal-Hagg}, Theorem 1.3]. The other example --- a much more complicated one that (refer again to condition (B4) in Remark \ref{rm4.4}) allows $\beta(n)$ to converge to 0 arbitrarily fast --- is the underlying Markov chain in a construction of the author [\cite{ref-journal-Bradley1983}, Theorem 2 and (p.\ 95) Remark 2.1]. (See also the comment by the author [\cite{ref-journal-Bradley1997}, p.\ 545, lines 20-25]). Alternatively, see [\cite{ref-journal-Bradley2007}, vol.\ 3, Theorem 31.3 and Corollary 31.5.]
\end{remark}

\begin{remark}\label{rm4.10}
In the context of Corollary \ref{co4.6} (see Remark \ref{rm4.4}), for a given $r\in (0,1)$, if condition (B4) holds with $\beta(X,n) \ll r^n$ as $n\to \infty$, then condition (R1) holds with $\rho(X,1) \le r$, by eq.\ (\ref{eq2.6.2}) and Corollary \ref{co5.7} in Section \ref{sc5}.

In essence that is ``half'' of a ``matching of rates'' result of Roberts and Tweedie [\cite{ref-journal-RT}, Theorem 3], building on Roberts and Rosenthal [\cite{ref-journal-RR}, proof of Theorem 2.1], in connection with ``Theorem \ref{th1.1}.1''/Remark \ref{rm4.7}. (The other ``half'' of that ``matching of rates'' result will not be reviewed in this expository paper here.)
\end{remark}

\begin{remark}\label{rm4.11}
In Theorem \ref{th3.3} and Corollary \ref{co4.6}, the majority of the stated equivalent conditions involve (at least indirectly) measures of dependence associated with strong mixing conditions.

In the context of either theorem, there are of course many other conditions that are closely related to, and in some cases equivalent to, the stated conditions. See for examples Roberts and Rosenthal \cite{ref-journal-RR}, Roberts and Tweedie \cite{ref-journal-RT}, and Kotoyiannis and Meyn \cite{ref-journal-KM}. For some such related ``other conditions'' that are based on ``measures of dependence'' other than ones associated with ``strong mixing conditions'', see also Beare [\cite{ref-journal-Beare}, Remark 3.4] and the references cited there.
\end{remark}

\section{Review of a proof of Proposition \ref{pr3.5}(I)} \label{sc5}
The following ``Context'' will provide the setting for the work here in Section \ref{sc5}.

\begin{context}\label{ctxt5.1}
(A) Suppose $X:= (X_k, k\in \Z)$ is a strictly stationary Markov chain with state space $S$ (see Convention \ref{cn2.2}(B)(D) again). Suppose this Markov chain $X$ is reversible (but not necessarily ``irreducible'' in the sense of  Definition \ref{df4.2}(A)).

(B) Let $\mu$ denote the (marginal) distribution (on $(S, \cS))$ of the random variable $X_0$.

(C) Let $\cL^1(\mu)$ denote the family of all $\cS/\cR$-measurable functions $g:S\to \R$ such that $\int_{S}|g|d\mu <\infty$. Let $\cL^2 (\mu)$ denote the family of all $\cS/\cR$-measurable functions $g:S\to \R$ such that $\int_{S} g^2d\mu <\infty$.  Of course (since $\mu(S) =1<\infty$) $\cL^2(\mu) \subset \cL^1(\mu)$. Define the following two subclasses of $\cL^2(\mu)$:
\begin{equation}\label{eq5.1.1}
\cL^2_{ub}(\mu) = \left\{ g\in \cL^2(\mu): \int_S g^2d\mu \le 1\right\};
\end{equation}
\begin{equation}\label{eq5.1.2}
\cL^2_{ub0}(\mu) = \left\{ g\in \cL^2(\mu) : \int_{S} g^2d\mu \le 1\,\,\,\mbox{and}\,\,\int_S gd\mu =0\right\}.
\end{equation}
Obviously $\cL^2_{ub0}(\mu) \subset \cL^2_{ub}(\mu) \subset \cL^2(\mu) \subset \cL^1(\mu)$. In (\ref{eq5.1.1}) and (\ref{eq5.1.2}), the letters ``$ub$'' in the subscript stand for ``(closed) unit ball''. In (\ref{eq5.1.2}), the digit 0 in the subscript stands for ``mean $0$'' --- the condition $\int_S gd\mu =0$.

(D) For any $g\in \cL^1(\mu)$ and any integer $k$, one of course has that
\begin{equation}\label{eq5.1.3}
E|g(X_k)| = \int_S |g|d\mu < \infty \,\,\,\,\mbox{and}\,\,\,\, E[g(X_k)] = \int_Sgd\mu.
\end{equation}
For any $g\in \cL^2(\mu)$ and any integer $k$, one of course has that
\begin{equation}
\label{eq5.1.4}
E[(g(X_k))^2] = \int_Sg^2 d\mu.
\end{equation}
\end{context}

The following lemma and some related subsequent material apply to any integer $m$, but for simplicity they will be restricted to just positive integers $m$.
\begin{lemma}\label{lm5.2}
In Context \ref{ctxt5.1}, one has that for  any positive integer $m$ and any $f\in \cL^1(\mu)$,
\begin{equation}
\label{eq5.2.1}
E \left[ f(X_{2m})\mid \sigma(X_m)\right] = E\left[ f(X_0) \mid \sigma(X_m) \right] \,\,\mbox{a.s.}
\end{equation}
\end{lemma}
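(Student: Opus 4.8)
The plan is to deduce the identity (\ref{eq5.2.1}) from the single fact that the ordered pairs $(X_m,X_{2m})$ and $(X_m,X_0)$ have the same distribution on $(S^2,\cS^2)$, which is itself an immediate consequence of strict stationarity together with reversibility. (Note that the Markov property as such will play no role here; only stationarity and the pair symmetry coming from reversibility are used.) To establish the distributional identity: by strict stationarity (Convention \ref{cn2.2}(C)), shifting the index set by $-m$, the random vector $(X_m,X_{2m})$ has the same distribution as $(X_0,X_m)$; and by Remark \ref{rm3.2}(B) (reversibility, applied with the positive integer $m$), the random vector $(X_0,X_m)$ has the same distribution as $(X_m,X_0)$. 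Chaining the two gives that $(X_m,X_{2m})$ and $(X_m,X_0)$ have the same distribution on $(S^2,\cS^2)$.

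Next I would use this to show that for every $A\in\cS$,
\[
E\bigl[f(X_{2m})\,I_A(X_m)\bigr] = E\bigl[f(X_0)\,I_A(X_m)\bigr].
\]
Both sides are finite, since by (\ref{eq5.1.3}) (and $f\in\cL^1(\mu)$) one has $E\bigl|f(X_j)\,I_A(X_m)\bigr|\le E|f(X_j)| = \int_S|f|\,d\mu<\infty$ for $j=2m$ and $j=0$. The displayed equality is precisely the assertion that $E[g(X_m,X_{2m})]=E[g(X_m,X_0)]$ for the function $g(s,t):=f(t)\,I_A(s)$, which is integrable with respect to the (common) law of those pairs; this follows from the equality of the two joint distributions by the standard truncation-and-monotone-convergence argument --- first for bounded $g$, then for integrable $g$.

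Finally I would appeal to the uniqueness of conditional expectation. Put $U_1:=E[f(X_{2m})\mid\sigma(X_m)]$ and $U_2:=E[f(X_0)\mid\sigma(X_m)]$; both are integrable and $\sigma(X_m)$-measurable. By the previous step, for every $A\in\cS$,
\[
E\bigl[U_1\,I_A(X_m)\bigr] = E\bigl[f(X_{2m})\,I_A(X_m)\bigr] = E\bigl[f(X_0)\,I_A(X_m)\bigr] = E\bigl[U_2\,I_A(X_m)\bigr].
\]
Since $\sigma(X_m)$ consists precisely of the events $\{X_m\in A\}$ with $A\in\cS$ (Convention \ref{cn2.2}(E)), it follows that $E[(U_1-U_2)I_G]=0$ for every $G\in\sigma(X_m)$; taking $G:=\{U_1>U_2\}$ and then $G:=\{U_1<U_2\}$ forces $U_1=U_2$ a.s., which is (\ref{eq5.2.1}). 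I do not expect any real obstacle in this argument --- the lemma is essentially an unwinding of the definitions --- and the one point needing slight care is the passage from equality of the joint distributions to equality of the integrals $E[f(X_{2m})I_A(X_m)]=E[f(X_0)I_A(X_m)]$ when $f$ is merely $\mu$-integrable rather than bounded, which is handled by the routine approximation together with the finiteness noted via (\ref{eq5.1.3}).
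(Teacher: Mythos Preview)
Your proposal is correct and follows essentially the same approach as the paper: both arguments reduce (\ref{eq5.2.1}) to the equality $E[f(X_{2m})I_A(X_m)]=E[f(X_0)I_A(X_m)]$ for all $A\in\cS$, obtained from the fact that $(X_m,X_{2m})$ and $(X_m,X_0)$ have the same distribution by stationarity plus reversibility, and then invoke the characterization of conditional expectation via its integrals over sets in $\sigma(X_m)$. Your write-up is slightly more explicit about integrability and the passage from bounded to $\cL^1$ functions, but the substance is the same.
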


\begin{proof}
Suppose $m$ and $f$ are as in the statement of Lemma \ref{lm5.2}. Both sides of (\ref{eq5.2.1}) are measurable with respect to $\sigma(X_m)$. Hence to show the a.s.\ equality in (\ref{eq5.2.1}), it suffices to show that for every event $G\in \sigma(X_m)$,
\begin{equation}
\label{eq5.2.2}
\int_G E\left[f(X_{2m} \mid \sigma(X_m)) \right]dP = \int_G E \left[ f(X_0 \mid \sigma(X_m)) \right] dP.
\end{equation}

Let $G\in \sigma(X_m)$ be arbitrary but fixed. Our task now is to prove (\ref{eq5.2.2}) for this fixed $G$.

Now (recall Convention \ref{cn2.2}(E)), there exists a (now fixed) set $A \in \cS$ such that $G = \{X_m \in A\}$. Then of course for the indicator functions $I_G: \Omega \to \{0,1\}$ and $I_A: S \to \{0,1\}$, the $\{0,1\}$-valued random variables $I_G$ and $I_A(X_m)$ are identical. (For a given $\omega \in \Omega$, one has the equivalencies $I_G (\omega) =1 \Longleftrightarrow \omega \in \{X_m \in A\} \Longleftrightarrow I_A(X_m(\omega)) =1.$)

Define the $\cS^2/\cR$-measurable function $\lambda :S^2 \to \R$ as follows: For every element $(t,u)\in S^2$, $\lambda(t,u) = (f(t) \cdot I_A(u))$.

Since $G \in \sigma(X_m)$, one has that
\begin{align}\label{eq5.2.3}
&\int_G E\left[ f(X_0) \mid \sigma(X_m) \right] dP = \int_G f(X_0)\, dP = \int_\Omega f(X_0) \cdot I_G \,dP \nonumber\\
= & \int_\Omega f(X_0) \cdot I_A(X_m) \,dP = \int_\Omega \lambda (X_0, X_m)\, dP = E[\lambda (X_0, X_m)].
\end{align}
By the same argument as in (\ref{eq5.2.3}) but with $X_0$ replaced by $X_{2m}$, one has that
\begin{equation}
\label{eq5.2.4}
\int_G E[f(X_{2m}) \mid \sigma(X_m) ]dP = E[\lambda(X_{2m}, X_m)].
\end{equation}
Since the Markov chain $X$ is strictly stationary and reversible, the distribution (on $(S^2, \cS^2))$ of the random vector $(X_{2m},X_m)$ is the same as that of the random vector $(X_m, X_0)$, which in turn is the same as that of $(X_0,X_m)$. Hence $E[\lambda(X_{2m},X_m)] =E[\lambda(X_0, X_m)]$. Hence by (\ref{eq5.2.3}) and (\ref{eq5.2.4}), eq.\ (\ref{eq5.2.2}) holds. That completes the proof of Lemma \ref{lm5.2}.
\end{proof}

\begin{remark}
\label{rm5.3}
In Context \ref{ctxt5.1}, the following observations hold:

(A) For any positive integer $m$ and any $f\in \cL^2(\mu)$ one has by the Markov property and Lemma \ref{lm5.2} that
\begin{align}\label{eq5.3.1}
E &[f(X_0)\cdot f(X_{2m})] = E\Bigl[E[f(X_0) \cdot f(X_{2m}) \mid \sigma(X_m)]\Bigr] \nonumber\\
&= E\bigl[ E[f(X_0) \mid \sigma(X_m)] \cdot E[f(X_{2m})\mid \sigma(X_m)]\bigr] \nonumber\\
&=E\left[(E[f(X_0)\mid \sigma(X_m)])^2\right] = \| E[f(X_0) \mid \sigma(X_m) ] \|^2_2 \ge 0.
\end{align}

(B) For any positive integer $m$ and any functions $g,h\in \cL^2_{ub}(\mu)$, one has by Cauchy's Inequality and (\ref{eq5.3.1}) that
\begin{align}\label{eq5.3.2}
0 \le &\left| E[g(X_0) \cdot h(X_m) ]\right| =  \left|E[E[g(X_0) \cdot h(X_m)\mid \sigma(X_m)]]\right|\nonumber\\
&= \left| E[h(X_m) \cdot E[g(X_0) \mid \sigma(X_m)]]\right| \le E\left| h(X_m) \cdot E[g(X_0) \mid \sigma(X_m)]\right| \nonumber\\
&\le \| h(X_m) \|_2 \cdot \| E[g(X_0) \mid \sigma(X_m) ] \|_2 \nonumber\\
&\le 1\cdot \| E[g(X_0) \mid \sigma(X_m) ] \|_2 = \left( E[g(X_0) \cdot g(X_{2m}) ] \right)^{1/2}.
\end{align}

(C) For any positive integer $m$ and any $g\in \cL^2_{ub}(\mu)$, one has by (\ref{eq5.3.2}) (with $h=g$ itself) that
\begin{equation}\label{eq5.3.3}
E[ g(X_0) \cdot g(X_{2m})] \ge |E[g(X_0) \cdot g(X_m) ] |^2 \ge 0,
\end{equation}
and of course also by Cauchy's Inequality and stationarity,
\begin{equation}\label{eq5.3.4}
E[g(X_0) \cdot g(X_{2m})] \le \|g(X_0)\|_2 \cdot \|g(X_{2m})\|_2 = \|g(X_0) \|^2_2 \le 1.
\end{equation}

(D) Refer to (\ref{eq2.1.1}). For every positive integer $n$, the integer $2^n$ is even. Hence for any positive integer $n$ and any $g\in \cL^2_{ub}(\mu)$,
\begin{equation}\label{eq5.3.5}
E[g(X_0) \cdot g(X_{\doub(n)})] \ge 0
\end{equation}
by (\ref{eq5.3.1}) (or (\ref{eq5.3.3})) with $m = (1/2)\cdot 2^n$, and
\begin{equation}\label{eq5.3.6}
\left(E[g(X_0) \cdot g(X_{\doub(n)})]\right)^2 \le E\left[ g(X_0) \cdot g(X_{\doub (n+1)})\right] \le 1
\end{equation}
by (\ref{eq5.3.3}) and (\ref{eq5.3.4}), and hence (take all three terms in (\ref{eq5.3.6}) to the power $1/2^{n+1}$),
\begin{equation}\label{eq5.3.7}
\left( E[g(X_0) \cdot g(X_{\doub(n)})]\right)^{1/\doub(n)} \le \left( E[g(X_0) \cdot g(X_{\doub(n+1)})]\right)^{1/\doub(n+1)} \le 1.
\end{equation}

(E) By (\ref{eq5.3.7}) and induction, one has that for any positive integer $n$ and any $g\in \cL^2_{ub}(\mu)$,
\begin{equation}\label{eq5.3.8}
\left(E[g(X_0) \cdot g(X_2)]\right)^{1/2} \le \left( E[g(X_0) \cdot g(X_{\doub(n)})]\right)^{1/\doub(n)}.
\end{equation}
Hence by (\ref{eq5.3.2}) and (\ref{eq5.3.8}), one has that for any positive integer $n$ and any $g, h\in \cL^2_{ub}(\mu)$,
\begin{equation}\label{eq5.3.9}
\left|E[g(X_0) \cdot h(X_1)]\right| \le \left( E[g(X_0) \cdot g(X_2)]\right)^{1/2} \le \left(E[g(X_0) \cdot g(X_{\doub(n)})]\right)^{1/\doub(n)}.
\end{equation}
\end{remark}

\begin{remark}
\label{rm5.4}
In Context \ref{ctxt5.1}, the following remarks hold:

(A) Suppose $k$ is an integer, and $V$ is a real-valued simple random variable that is measurable with respect to the $\sigma$-field $\sigma(X_k)$, such that
\begin{equation}\label{eq5.4.1}
E(V^2) \le 1\quad \mbox{and}\quad EV =0.
\end{equation}
Let $v(1)$, $v(2)$,$\dots,v(n)$ denote the elements in the range of $V$  (with each such element listed exactly once). Then one has the standard representation
\begin{equation}\label{eq5.4.2}
V = \sum^n_{i=1} v(i) \cdot I_{\{V = v(i)\}}.
\end{equation}
[For any given $\omega \in \Omega$, letting $j\in \{1,2,\dots,n\}$ be such that $V(\omega) = v(j)$, one has that $v(j) \cdot I_{\{V=v(j)\}}(\omega) = v(j) \cdot 1 = v(j)$ and $v(i) \cdot I_{\{V = v(i)\}}(\omega) =v(i) \cdot 0 = 0$ for $i \not= j$, and hence $\sum\limits^n_{i=1} v(i) \cdot I_{\{V=v(i)\}} (\omega) = v(j) = V(\omega)$.]

(B) Continuing with (A), for each $i \in \{1,2,\dots,n\}$,  let $A(i) \in \cS$ be such that $\{V=v(i)\} = \{X_k \in  A(i)\}$. For each $i\in \{1,2,\dots,n\}$, one has the equality of random variables
\begin{equation}\label{eq5.4.3}
I_{\{V = v(i)\}} = I_{A(i)}(X_k).
\end{equation}
[For a given $\omega \in \Omega$, $I_{A(i)}(X_k(\omega)) = 1\Longleftrightarrow \omega \in \{X_k \in A(i)\} \Longleftrightarrow I_{\{V=v(i)\}}(\omega) =1$.]

(C) Continuing with (A)--(B), define the $\cS/\cR$-measurable simple function $g:S\to \R$ as follows: For each $s\in S$,
\begin{equation}\label{eq5.4.4}
g(s) := \sum^n_{i=1} v(i) \cdot I_{A(i)}(s).
\end{equation}
Then by (\ref{eq5.4.2}), (\ref{eq5.4.3}), and (\ref{eq5.4.4}), one has the representation
\begin{equation}\label{eq5.4.5}
V = \sum^n_{i=1} v(i) \cdot I_{A(i)}(X_k) = g(X_k).
\end{equation}
By (\ref{eq5.1.4}), (\ref{eq5.4.5}), and (\ref{eq5.4.1}),
\begin{equation}
\label{eq5.4.6}
\int_S g^2d\mu = E[(g(X_k))^2] = E(V^2) \le 1;
\end{equation}
and by (\ref{eq5.1.3}), (\ref{eq5.4.5}), and (\ref{eq5.4.1}),
\begin{equation}\label{eq5.4.7}
\int_S gd\mu = E[g(X_k)] = EV =0.
\end{equation}
Of course by (\ref{eq5.4.6}) and (\ref{eq5.4.7}), $g\in \cL^2_{ub0}(\mu)$.
\end{remark}

\begin{lemma}\label{lm5.5}
In Context \ref{ctxt5.1}, suppose $0<r<1$, and suppose that for every pair of sets $A,B\in \cS$, one has that
\begin{equation}\label{eq5.5.1}
\left| P(\{X_0 \in A\} \cap \{X_{\doub(n)} \in B\}) - \mu(A) \mu(B)\right| \ll r^{\doub(n)} \,\,\mbox{as $n\to\infty$.}
\end{equation}
Then $\rho(X,1) \le r$.
\end{lemma}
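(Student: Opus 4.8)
The plan is to derive $\rho(X,1)\le r$ from the characterization (\ref{eq2.4.4}) of the maximal correlation coefficient combined with the ``doubling'' inequalities already assembled in Remark \ref{rm5.3}. First I would record that $\rho(X,1)=\rho(\sigma(X_0),\sigma(X_1))$ by (\ref{eq2.7.2}), so that by (\ref{eq2.4.4}) (using the observation in Remark \ref{rm2.4}(B) that replacing $E(UV)$ by $|E(UV)|$ leaves the supremum unchanged) together with the passage from simple mean-zero random variables to functions carried out in Remark \ref{rm5.4}, it suffices to show $|E[g(X_0)\cdot h(X_1)]|\le r$ for every pair $g,h\in\cL^2_{ub0}(\mu)$. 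I would then fix such $g$ and $h$; since $\cL^2_{ub0}(\mu)\subset\cL^2_{ub}(\mu)$, inequality (\ref{eq5.3.9}) applies and gives, for every positive integer $n$,
\[
|E[g(X_0)\cdot h(X_1)]|\le\left(E[g(X_0)\cdot g(X_{\doub(n)})]\right)^{1/\doub(n)},
\]
reducing everything to an estimate of $E[g(X_0)\cdot g(X_{\doub(n)})]$ as $n\to\infty$.

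The second step is a bilinear expansion. Writing the simple function $g$ as $g=\sum_{i=1}^N c_iI_{A_i}$ with real $c_i$ and $A_i\in\cS$, one has $E[g(X_0)\cdot g(X_{\doub(n)})]=\sum_{i,j}c_ic_j\,P(\{X_0\in A_i\}\cap\{X_{\doub(n)}\in A_j\})$, while $0=\bigl(\int_S g\,d\mu\bigr)^2=\sum_{i,j}c_ic_j\,\mu(A_i)\mu(A_j)$ since $g$ has mean $0$; subtracting expresses $E[g(X_0)\cdot g(X_{\doub(n)})]$ as a finite sum of terms $c_ic_j\bigl(P(\{X_0\in A_i\}\cap\{X_{\doub(n)}\in A_j\})-\mu(A_i)\mu(A_j)\bigr)$. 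Applying hypothesis (\ref{eq5.5.1}) to each of the finitely many pairs $(A_i,A_j)$ and summing, I would obtain a constant $K\in[0,\infty)$ (depending on $g$) and an index $n_0$ with $|E[g(X_0)\cdot g(X_{\doub(n)})]|\le K r^{\doub(n)}$ for all $n\ge n_0$; and since $\doub(n)$ is even, (\ref{eq5.3.5}) makes this quantity nonnegative, so $0\le E[g(X_0)\cdot g(X_{\doub(n)})]\le K r^{\doub(n)}$ for $n\ge n_0$.

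The third step is to take $\doub(n)$-th roots and pass to the limit: for $n\ge n_0$, $\left(E[g(X_0)\cdot g(X_{\doub(n)})]\right)^{1/\doub(n)}\le K^{1/\doub(n)}\,r$, and since $\doub(n)=2^n\to\infty$ while $K$ is a fixed finite constant, $K^{1/\doub(n)}\to 1$ (the case $K=0$ being trivial), whence $\limsup_{n\to\infty}\left(E[g(X_0)\cdot g(X_{\doub(n)})]\right)^{1/\doub(n)}\le r$. Combining this with the displayed inequality of the first step, whose left side is independent of $n$, gives $|E[g(X_0)\cdot h(X_1)]|\le r$; as $g,h$ were arbitrary in $\cL^2_{ub0}(\mu)$, (\ref{eq2.4.4}) then yields $\rho(X,1)\le r$. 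I do not expect a serious obstacle, since the real content---the submultiplicative ``doubling'' behaviour of the quantities $E[g(X_0)g(X_{\doub(n)})]$---is precisely what Remark \ref{rm5.3} supplies; the one point requiring care is that the implied constants in (\ref{eq5.5.1}) may depend on the sets $A,B$ (hence on $g$), so I must fix $g$ before absorbing them into a single constant $K=K(g)$ and only then extract $\doub(n)$-th roots, at which stage the $g$-dependence of $K$ harmlessly disappears because the exponents $1/\doub(n)$ tend to $0$.
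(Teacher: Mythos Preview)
Your proposal is correct and uses essentially the same ingredients as the paper's proof: the reduction via (\ref{eq2.4.4}) and Remark \ref{rm5.4} to simple functions, the doubling inequality (\ref{eq5.3.9}), the bilinear expansion with the mean-zero subtraction, and the hypothesis (\ref{eq5.5.1}) applied to the finitely many pairs $(A_i,A_j)$. The only structural difference is that the paper argues by contradiction (assuming $\rho(X,1)>r$, choosing $t$ with $r<t<\rho(X,1)$, and showing that $E[g(X_0)g(X_{\doub(n)})]>t^{\doub(n)}$ forces $E[g(X_0)g(X_{\doub(n)})]/r^{\doub(n)}\to\infty$, contradicting the $\ll r^{\doub(n)}$ bound), whereas you argue directly by taking $\doub(n)$-th roots and using $K^{1/\doub(n)}\to 1$; your route is if anything slightly cleaner.

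One small slip to fix: you write that it suffices to show $|E[g(X_0)h(X_1)]|\le r$ ``for every pair $g,h\in\cL^2_{ub0}(\mu)$'', but your second step then writes $g=\sum_{i=1}^N c_iI_{A_i}$, which presupposes $g$ is simple. Since (\ref{eq2.4.4}) and Remark \ref{rm5.4} only require the bound for \emph{simple} $g,h\in\cL^2_{ub0}(\mu)$, you should state the reduction that way from the outset; then the argument is complete as written.
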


\begin{proof}
As in the statement, suppose $0<r<1$, and that (\ref{eq5.5.1}) holds for every pair of sets $A,B\in \cS$. Suppose
\begin{equation}\label{eq5.5.2}
\rho(X,1) >r.
\end{equation}
We shall aim for a contradiction.

Let $t$ be a real number such that
\begin{equation}\label{eq5.5.3}
r< t<\rho(X,1).
\end{equation}

By (\ref{eq2.7.2}) and (\ref{eq5.5.3}),
\[
\rho(\sigma(X_0),\sigma(X_1)) = \rho(X,1)>t.
\]
Accordingly, applying (\ref{eq2.4.4}) to the $\sigma$-fields $\sigma(X_0)$ and $\sigma(X_1)$, let $V$ and $W$ be real-valued simple random variables such that $V$ is measurable with respect to $\sigma(X_0)$, $W$ is measurable with respect to $\sigma(X_1)$, and
\begin{equation}\label{eq5.5.4}
E(V^2) \le 1,\, E(W^2) \le 1,\,\,EV= EW=0, \,\,\mbox{and}\,\, E(VW)>t.
\end{equation}

Applying the procedure in Remark \ref{rm5.4}, let $\ell$ be a positive integer, and $v(1),v(2),\dots,v(\ell)$ be distinct real numbers (the elements of the range of $V$), and $C(1),C(2),\dots,C(\ell)$ be sets $\in \cS$, such that (\`a la (\ref{eq5.4.5}))
\begin{equation}\label{eq5.5.5}
V = \sum^\ell_{i=1} v(i) \cdot I_{C(i)} (X_0);
\end{equation}
and (see (\ref{eq5.4.4})) define the $\cS/\cR$-measurable simple function $g:S\to \R$ as follows: For $s\in S$,
\begin{equation}\label{eq5.5.6}
g(s) := \sum^\ell_{i=1} v(i) \cdot I_{C(i)} (s).
\end{equation}
Then (\`a la (\ref{eq5.4.5})),
\begin{equation}\label{eq5.5.7}
V = g(X_0).
\end{equation}
Of course by (\ref{eq5.1.4}), (\ref{eq5.5.7}), and (\ref{eq5.5.4}),
\begin{equation}\label{eq5.5.8}
\int_S g^2d\mu = E[(g(X_0))^2] = E(V^2) \le 1;
\end{equation}
and by (\ref{eq5.1.3}), (\ref{eq5.5.7}), and (\ref{eq5.5.4}),
\begin{equation}\label{eq5.5.9}
\int_S gd\mu = E[g(X_0)] = EV =0,
\end{equation}
and hence $g\in \cL^2_{ub0}(\mu)$.

Applying the procedure of Remark \ref{rm5.4} again in the same way, let $h:S \to \R$ be an $\cS/\cR$-measurable simple function such
\begin{equation}\label{eq5.5.10}
W = h(X_1).
\end{equation}
Analogously to (\ref{eq5.5.8}) and (\ref{eq5.5.9}), $h\in \cL^2_{ub0}(\mu)$.

By (\ref{eq5.5.4}), (\ref{eq5.5.7}), and (\ref{eq5.5.10}), $E[g(X_0) \cdot h(X_1)]>t$. Hence by (\ref{eq5.3.9}) (its entire sentence), for every positive integer $n$,
\[
t<\left( E[g(X_0) \cdot g(X_{\doub(n)})]\right)^{1/\doub(n)}.
\]
Hence
\begin{equation}\label{eq5.5.11}
\forall\, n\ge 1,\,\,\, E\left[ g(X_0) \cdot g(X_{\doub(n)})\right] >t^{\doub(n)}.
\end{equation}
Also, since $0<r<t$ by (\ref{eq5.5.3}) (and the hypothesis $r>0$), one has that $t/r >1$ and hence $t^{\doub(n)}/r^{\doub(n)} = (t/r)^{\doub(n)} \longrightarrow \infty$ as $n\to \infty$.  Hence by (\ref{eq5.5.11}),
\begin{equation}
\label{eq5.5.12}
E\left[ g(X_0) \cdot g(X_{\doub(n)})\right]/r^{\doub(n)} \longrightarrow \infty \,\,\,\mbox{as $n\to \infty$.}
\end{equation}
We shall return to (\ref{eq5.5.12}) below. But first we need to develop another, separate line of argument.

Refer to (\ref{eq5.5.5}), (\ref{eq5.5.6}), and (\ref{eq5.5.7}), with the positive integer $\ell$, the (distinct) real numbers $v(1)$, $v(2),\dots, v(\ell)$, and the sets $C(1)$, $C(2),\dots,C(\ell) \in \cS$. If $i$ and $j$ (equal or distinct) are each an element of $\{1,2,\dots, \ell\}$, then for any positive integer $m$,
\[
I_{C(i)}(X_0) \cdot I_{C(j)} (X_m) = I(X_0 \in C(i)) \cdot I(X_m\in C(j))=I(\{X_0 \in C(i)\} \cap \{X_m \in C(j)\})
\]
(where the second and third terms involve indicator functions on $\Omega$), and hence
\[
E\left[ I_{C(i)}(X_0) \cdot I_{C(j)} (X_m)\right] = P( \{X_0 \in C(i)\} \cap \{X_m \in C(j)\}).
\]

Hence for any positive integer $m$, by (\ref{eq5.5.6}),
\begin{align}\label{eq5.5.13}
E[g(X_0) \cdot g(X_m)] &= E\left[\left(\sum^\ell_{i=1} v(i) \cdot I_{C(i)} (X_0)\right)\left(\sum^\ell_{j=1} v(j) I_{C(j)}(X_m)\right)\right] \nonumber\\
&= E\left[ \sum^\ell_{i=1}\sum^\ell_{j=1} v(i) \cdot v(j) \cdot I_{C(i)} (X_0) \cdot I_{C(j)}(X_m)\right] \nonumber\\
&= \sum^\ell_{i=1} \sum^\ell_{j=1} v(i) \cdot v(j) \cdot P\left( \{X_0 \in C(i)\} \cap \{ X_m \in C(j)\}\right).
\end{align}

Also, for each $i\in \{1,2,\dots,\ell\}$, again noting that $I_{C(i)}(X_0) = I(X_0 \in C(i))$ (where again the second indicator function is defined on $\Omega$), one has that $E(I_{C(i)}(X_0)) = P(X_0 \in C(i)) = \mu(C(i))$, (where of course the last equality comes from Context \ref{ctxt5.1}(B)). Hence by (\ref{eq5.5.9}), and (\ref{eq5.5.6}),
\[
0 = E[g(X_0)] = E\left[ \sum^\ell_{i=1} v(i) \cdot I_{C(i)} (X_0)\right] = \sum^\ell_{i=1} v(i) \cdot \mu(C(i)).
\]
Hence
\[
0 = 0 \cdot 0 = \left(\sum^\ell_{i=1} v(i) \cdot \mu (C(i))\right) \cdot \left( \sum^\ell_{j=1}v(j) \cdot \mu(C(j))\right) = \sum^\ell_{i=1} \sum^\ell_{j=1} v(i)\cdot v(j)\cdot \mu(C(i))\cdot \mu(C(j)).
\]
Subtracting that $0$ from the far right side of (\ref{eq5.5.13}), one has by (\ref{eq5.5.13}) itself that for each positive integer $m$,
\begin{align}\label{eq5.5.14}
E \left[g(X_0) \cdot g(X_m)\right]&= \sum^\ell_{i=1} \sum^\ell_{j=1} v(i) \cdot v(j) \cdot P\left( \{X_0\in C(i)\}\cap \{ X_m\in C(j)\}\right) \nonumber\\
&\qquad -\sum^\ell_{i=1} \sum^\ell_{j=1} v(i) \cdot v(j) \cdot \mu (C(i)) \cdot \mu (C(j)) \nonumber\\
&= \sum^\ell_{i=1} \sum^\ell_{j=1} v(i) \cdot v(j) \cdot \left[ P(\{X_0\in C(i)\} \cap \{X_m \in C(j)\}) - \mu(C(i)) \cdot \mu(C(j))\right].
\end{align}

Now by (\ref{eq5.5.1}), if $i$ and $j$ (equal or distinct) are each an element of $\{1,2,\dots,\ell\}$, then
\[
\left| P(\{X_0 \in C(i)\}\cap \{X_{\doub(n)} \in C(j)\}) - \mu(C(i)) \cdot \mu(C(j))\right| \ll r^{\doub(n)} \,\,\,\mbox{as $n\to \infty$.}
\]
Hence
\[
\sum^\ell_{i=1}\sum^\ell_{j=1} |v(i) \cdot v(j)| \cdot \left| P(\{X_0 \in C(i)\} \cap \{X_{\doub(n)} \in C(j)\}) - \mu (C(i)) \cdot \mu (C(j))\right| \ll r^{\doub(n)}\,\,\mbox{as\,\,$n\to \infty$.}
\]
Hence by (\ref{eq5.5.14}) (and (\ref{eq5.3.5})),
\[
E\left[ g(X_0) \cdot g(X_{\doub(n)})\right] \ll r^{\doub(n)} \,\,\mbox{as $n\to \infty$}.
\]
That is,
\begin{equation}\label{eq5.5.15}
\limsup_{n\to \infty} E\left[ g(X_0) \cdot g(X_{\doub (n)})\right]/r^{\doub(n)} <\infty.
\end{equation}

However, eqs.\ (\ref{eq5.5.15}) and (\ref{eq5.5.12}) contradict each other. Hence eq.\ (\ref{eq5.5.2}) must be false. Instead $\rho(X,1) \le r$ after all. That completes the proof of Lemma \ref{lm5.5}.
\end{proof}

\begin{remark}\label{rm5.6}
By Lemma \ref{lm5.5}, one has that Proposition \ref{pr3.5}(I) holds.

The following ``corollary'' of Lemma \ref{lm5.5} will give a little more information.
\end{remark}

\begin{corollary}\label{co5.7}
In Context \ref{ctxt5.1}, the following holds:

For any given $r\in (0,1)$, the following four conditions (i), (ii), (iii), (iv) are equivalent:

\noindent (i) $\rho(X,1) \le r$.

\noindent (ii) $\rho(X,n) \le r^n$ for all $n\in \N$.

\noindent (iii) $\alpha(X,n) \le r^n$ for all $n\in \N$.

\noindent (iv) For every pair of sets $A,B\in \cS$, eq.\ (\ref{eq5.5.1}) holds.
\end{corollary}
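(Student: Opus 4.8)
The plan is to establish the cycle of implications (i) $\Rightarrow$ (ii) $\Rightarrow$ (iii) $\Rightarrow$ (iv) $\Rightarrow$ (i), with the up-front observation that only the last of these implications actually uses reversibility --- and that one is already in hand, since it is precisely Lemma \ref{lm5.5}. The first three implications are routine consequences of the inequalities recorded in Section \ref{sc2} and need no reversibility whatsoever.

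For (i) $\Rightarrow$ (ii) I would simply invoke the submultiplicativity bound (\ref{eq2.7.6}): if $\rho(X,1)\le r$, then $\rho(X,n)\le[\rho(X,1)]^n\le r^n$ for every $n\in\N$. For (ii) $\Rightarrow$ (iii) I would use the left-hand inequality in (\ref{eq2.6.1}), namely $4\alpha(X,n)\le\rho(X,n)$, to get $\alpha(X,n)\le r^n/4\le r^n$ for all $n$. For (iii) $\Rightarrow$ (iv), fix $A,B\in\cS$; for each $n$ the definition (\ref{eq2.3.1}) of $\alpha(\cdot,\cdot)$, together with (\ref{eq2.7.1}) applied to the positive integer $\doub(n)=2^n$ and the stationarity identities $P(X_0\in A)=\mu(A)$ and $P(X_{\doub(n)}\in B)=\mu(B)$, yields
\[
\bigl| P(\{X_0\in A\}\cap\{X_{\doub(n)}\in B\}) - \mu(A)\mu(B)\bigr| \le \alpha(\sigma(X_0),\sigma(X_{\doub(n)})) = \alpha(X,\doub(n)) \le r^{\doub(n)},
\]
the last step by hypothesis (iii); hence the $\limsup$ as $n\to\infty$ of the ratio of the left side to $r^{\doub(n)}$ is at most $1<\infty$, which is exactly (\ref{eq5.5.1}). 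Finally, (iv) $\Rightarrow$ (i) is Lemma \ref{lm5.5} verbatim (recall $0<r<1$), closing the cycle and giving the equivalence of all four conditions.

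As for the main obstacle: it has already been cleared. The only substantive step, and the only place where reversibility is genuinely invoked, is the implication (iv) $\Rightarrow$ (i), which was carried out in Lemma \ref{lm5.5} by means of the ``doubling'' estimates assembled in Remark \ref{rm5.3}. What remains is bookkeeping; the only points to watch are not to reach for heavier machinery than needed (in particular one does \emph{not} need Proposition \ref{pr3.5}(III) here --- the one-sided bound (\ref{eq2.7.6}) already gives (i) $\Rightarrow$ (ii)) and to note that $\doub(n)$ is a positive integer so that (\ref{eq2.7.1}) legitimately applies.
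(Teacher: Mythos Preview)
Your proposal is correct and follows essentially the same cycle of implications as the paper's proof: (i) $\Rightarrow$ (ii) by (\ref{eq2.7.6}), (ii) $\Rightarrow$ (iii) by the inequality $4\alpha\le\rho$ (the paper cites (\ref{eq2.4.1}), you cite its sequence form (\ref{eq2.6.1})), (iii) $\Rightarrow$ (iv) by the trivial bound via $\alpha(X,2^n)$, and (iv) $\Rightarrow$ (i) by Lemma \ref{lm5.5}. Your write-up supplies a bit more detail and correctly isolates the one place where reversibility enters, but the argument is the same.
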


Corollary \ref{co5.7} is in some sense an ``analog'' of --- and is at least in spirit contained in --- the result of Roberts and Tweedie [\cite{ref-journal-RT}, Theorem 3], building on Roberts and Rosenthal [\cite{ref-journal-RR}, Proof of Theorem 2.1], that was alluded to in Remark \ref{rm3.6}(C) and Remark \ref{rm4.10} --- the ``matching of rates'' result in connection with ``Theorem \ref{th1.1}.1''/Remark \ref{rm4.7}.

\begin{proof}[Proof of Corollary \ref{co5.7}]
Suppose $r\in (0,1)$. Then (i) $\Longrightarrow$ (ii) by (\ref{eq2.7.6}), (ii) $\Longrightarrow$ (iii) by (\ref{eq2.4.1}), (iii) $\Longrightarrow$ (iv) trivially (with [LHS of (\ref{eq5.5.1})]\ $\le \alpha(X,2^n) \le r^{\doub(n)}$ for each $n\in \N$), and (iv) $\Longrightarrow$ (i) by Lemma \ref{lm5.5}. Thus Corollary \ref{co5.7} holds.
\end{proof}

\section{Review of a proof of Proposition \ref{pr3.5}(III)}
\label{sc6}
This section is devoted primarily to a review of a proof of the observation taken essentially from Longla [\cite{ref-journal-Longla14}, Lemma 2.1] that was stated in Proposition \ref{pr3.5}(III). It is stated again here for convenient reference:

\begin{proposition}[cf.\ Longla \textrm{\cite{ref-journal-Longla14}, Lemma 2.1}] \label{pr6.1}  In Context \ref{ctxt5.1} (that is, for a given strictly stationary, reversible Markov chain $X:= (X_k, k\in \Z)$ with state space $S$), one has that for every positive integer $n$,
\begin{equation}\label{eq6.1.1}
\rho(X,n) = [\rho(X,1)]^n.
\end{equation}
\end{proposition}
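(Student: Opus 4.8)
The plan is to combine the submultiplicativity in eq.\ (\ref{eq2.7.4})---which, via eq.\ (\ref{eq2.7.6}), already gives the ``easy'' bound $\rho(X,n)\le[\rho(X,1)]^n$ for all $n\in\N$ with no appeal to reversibility---with a reverse estimate in which reversibility enters only through the results of Section \ref{sc5}. The one substantive ingredient is the \emph{doubling identity} $\rho(X,2m)=[\rho(X,m)]^2$ for every $m\in\N$; everything after that is elementary bookkeeping with numbers in $[0,1]$. To prove the doubling identity, note first that $\rho(X,2m)\le[\rho(X,m)]^2$ is just eq.\ (\ref{eq2.7.4}) with both arguments equal to $m$. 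For the reverse inequality, argue as in the proof of Lemma \ref{lm5.5}: by (\ref{eq2.7.2}), (\ref{eq2.4.4}), and the conversion of simple random variables into simple functions described in Remark \ref{rm5.4}, one has $\rho(X,m)=\sup\bigl\{\,|E[g(X_0)\,h(X_m)]| : g,h\in\cL^2_{ub0}(\mu)\,\bigr\}$. For any such $g,h$, inequality (\ref{eq5.3.2})---whose proof uses reversibility through Lemma \ref{lm5.2} and the identity (\ref{eq5.3.1})---gives $|E[g(X_0)\,h(X_m)]|\le\bigl(E[g(X_0)\,g(X_{2m})]\bigr)^{1/2}$; and since $g$ is a simple function, $g(X_0)$ and $g(X_{2m})$ are simple, mean-zero, with second moment $\le 1$, so $E[g(X_0)\,g(X_{2m})]\le\rho(\sigma(X_0),\sigma(X_{2m}))=\rho(X,2m)$ by (\ref{eq2.4.4}) and (\ref{eq2.7.2}). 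Taking the supremum over $g,h$ gives $\rho(X,m)\le\rho(X,2m)^{1/2}$, i.e.\ $[\rho(X,m)]^2\le\rho(X,2m)$.

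Iterating the doubling identity gives, by induction on $n$, that $\rho(X,\doub(n))=[\rho(X,\doub(n-1))]^2=\cdots=[\rho(X,1)]^{\doub(n)}$ for every $n\ge0$. To reach an arbitrary $m\in\N$, pick $N\in\N$ with $\doub(N)>m$ and put $k:=\doub(N)-m\ge1$; then by (\ref{eq2.7.4}), the powers-of-two case, and (\ref{eq2.7.6}),
\[
[\rho(X,1)]^{\doub(N)}=\rho(X,\doub(N))\le\rho(X,m)\,\rho(X,k)\le[\rho(X,1)]^{m}\,[\rho(X,1)]^{k}=[\rho(X,1)]^{\doub(N)} .
\]
If $\rho(X,1)>0$, divide through by $[\rho(X,1)]^{\doub(N)}$: the factors $\rho(X,m)/[\rho(X,1)]^m$ and $\rho(X,k)/[\rho(X,1)]^k$ lie in $[0,1]$ and multiply to $1$, so each equals $1$, and in particular $\rho(X,m)=[\rho(X,1)]^m$. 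If $\rho(X,1)=0$, then $\rho(X,m)\le[\rho(X,1)]^m=0$ by (\ref{eq2.7.6}), so again $\rho(X,m)=[\rho(X,1)]^m$.

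The only step requiring genuine work---and the only place reversibility is used---is the reverse doubling inequality $[\rho(X,m)]^2\le\rho(X,2m)$; but since the identity (\ref{eq5.3.1}) and its consequence (\ref{eq5.3.2}) are already in hand from Section \ref{sc5}, even that reduces to a short manipulation echoing the proof of Lemma \ref{lm5.5}. The one mild subtlety worth flagging is the last step: passing from the powers of $2$ to all $n$ is not quite immediate, but the ``product equal to $1$ with both factors $\le 1$'' squeeze handles it cleanly.
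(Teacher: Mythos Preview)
Your proof is correct and follows essentially the same route as the paper: use the reversibility-based inequality (\ref{eq5.3.2}) to establish the result at powers of two, then interpolate to arbitrary $n$ via the submultiplicativity (\ref{eq2.7.4})--(\ref{eq2.7.6}). The one minor difference is in how the powers-of-two case is reached. The paper fixes $t<\rho(X,1)$, picks simple $g,h$ with $E[g(X_0)h(X_1)]>t$, applies the iterated bound (\ref{eq5.3.9}) to get $\rho(X,2^n)>t^{2^n}$, and then lets $t\uparrow\rho(X,1)$ (its Lemmas~1 and~2). You instead prove the exact doubling identity $\rho(X,2m)=[\rho(X,m)]^2$ directly from (\ref{eq5.3.2}) and iterate---this avoids the limiting argument and is a bit cleaner, though it is really the same idea packaged differently. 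Your final ``product of two factors $\le 1$ equals $1$'' squeeze is just a rephrasing of the paper's division step in its concluding paragraph.

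One small presentational point: you first write the supremum over all $g,h\in\cL^2_{ub0}(\mu)$ and then say ``since $g$ is a simple function''. Either restrict to simple $g,h$ throughout (as (\ref{eq2.4.4}) and Remark~\ref{rm5.4} justify), or note that (\ref{eq5.3.2}) and the bound $E[g(X_0)g(X_{2m})]\le\rho(X,2m)$ both hold for arbitrary $g\in\cL^2_{ub0}(\mu)$, so simplicity is not actually needed. The argument is correct either way.
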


As stated after Proposition \ref{pr3.5}, the argument below has the spirit of Longla's argument, but its presentation here will be ``grounded'' in the material in Section \ref{sc5}.

\begin{proof}
If $\rho(X,1) =0$, then $X$ is a sequence of independent, identically distributed, $S$-valued random variables, hence $\rho(X,n) =0$ for every $n\in\N$, and hence (\ref{eq6.1.1}) holds for all $n\in\N$ and we are done.

Therefore, henceforth assume
\begin{equation}\label{eq6.1.2}
\rho(X,1)>0.
\end{equation}
(Of course that includes the possibility that $\rho(X,1) =1$.) The rest of the proof will proceed through two lemmas and then a final argument.

\medskip 
\noindent \textbf{Lemma 1.}\label{lm1}
\textit{Refer to (\ref{eq6.1.2}). Suppose $n$ is a positive integer, and $t$ is a number such that
\begin{equation}\label{eq6.1.3}
0<t<\rho(X,1).
\end{equation}
Then}
\begin{equation}\label{eq6.1.4}
\rho(X,2^n) >t^{\doub(n)}.
\end{equation}
\medskip

{\textbf{ Proof of Lemma 1}.} Referring to (\ref{eq6.1.3}) and applying (\ref{eq2.4.4}), let $V$ and $W$ be real-valued simple random variables such that $V$ is measurable with respect to $\sigma(X_0)$, $W$ is measurable with respect to $\sigma(X_1)$,  and
\begin{equation}\label{eq6.1.5}
E(V^2) \le 1,\,\,E(W^2)\le 1,\,\,EV = EW =0,\,\,\mbox{and}\,\,E(VW) >t.
\end{equation}

Employing the procedure in Remark \ref{rm5.4}, let $g:S\to \R$ and $h:S\to \R$ be $\cS/\cR$-measurable simple functions such that $V = g(X_0)$ and $W = h(X_1)$. Of course
\begin{equation}\label{eq6.1.6}
\int_\R g^2d\mu = E\left[(g(X_0))^2\right] = E(V^2) \le 1,
\end{equation}
and
\begin{equation}\label{eq6.1.7}
\int_\R gd\mu= E\left[ g(X_0)\right] = EV =0,
\end{equation}
and hence $g \in \cL_{ub0}(\mu)$. Similarly $h\in \cL_{ub0}(\mu)$.  Also,
\begin{equation}\label{eq6.1.8}
E\left[ g(X_0) \cdot h(X_1)\right] = E(VW) >t.
\end{equation}

Now refer to the positive integer $n$ in the statement of Lemma 1. By (\ref{eq6.1.8}) and (\ref{eq5.3.9}),
\[
t< E\left[ g(X_0) \cdot h(X_1)\right] \le \left( E[g(X_0) \cdot g(X_{\doub(n)})]\right)^{1/\doub(n)},
\]
and hence
\begin{equation}\label{eq6.1.9}
t^{\doub(n)} < E\left[ g(X_0) \cdot g(X_{\doub(n)})\right].
\end{equation}
By strict stationarity and (\ref{eq6.1.6}) and (\ref{eq6.1.7}), one has that $E[(g(X_{\doub(n)}))^2] = E[(g(X_0))^2]\le 1$ and \newline \noindent $E[g(X_{\doub(n)})] = E[g(X_0)]=0$.  Hence by (\ref{eq6.1.6}), (\ref{eq6.1.7}), (\ref{eq6.1.9}), and simple arithmetic,
\begin{align}
\corr\left(g(X_0),g(X_{\doub(n)})\right) &\ge \cov \left(g(X_0),g(X_{\doub(n)})\right) \nonumber\\
&=E\left[g(X_0)\cdot g(X_{\doub(n)})\right] >t^{\doub(n)}.
\end{align}
Hence (\ref{eq6.1.4}) holds.  That completes the proof of Lemma 1.

\medskip
\noindent \textbf{Lemma 2.}\label{lm2} \textit{Refer to (\ref{eq6.1.2}). Suppose $n$ is a positive integer. Then}
\begin{equation}\label{eq6.1.10}
\rho\left(X,2^n\right) = [\rho(X,1)]^{\doub(n)}.
\end{equation}
\medskip

{\textbf{Proof of Lemma 2.}} By Lemma 1,
\begin{equation}\label{eq6.1.11}
\rho(X,2^n) \ge \lim_{t\to \rho(X,1)-} t^{\doub(n)} = [\rho(X,1)]^{\doub(n)}.
\end{equation}
On the other hand, by (\ref{eq2.7.6}), $\rho(X,2^n) \le [\rho(X,1)]^{\doub(n)}$. Combining that with (\ref{eq6.1.11}), one obtains (\ref{eq6.1.10}). That completes the proof of Lemma 2.
\end{proof}

\textbf{Conclusion of proof of Proposition \ref{pr6.1}.} Eq.\ (\ref{eq6.1.1}) holds trivially for $n=1$, and it holds for $n\in \{2,4,8,16,32,\dots\}$ by Lemma 2. Let
\[
\ell \in \N -\{1,2,4,8,16,32,\dots\}
\]
be arbitrary but fixed.  To complete the proof of Proposition \ref{pr6.1}, it suffices to show that (\ref{eq6.1.1}) holds for $n=\ell$, that is, to show that
\begin{equation}
\label{eq6.1.12}
\rho(X,\ell) = [\rho(X,1)]^\ell.
\end{equation}

Let $j$ be a positive integer sufficiently large that
\begin{equation}\label{eq6.1.13}
\ell <2^j.
\end{equation}

By Lemma 2 and eqs.\ (\ref{eq6.1.13}), (\ref{eq2.7.4}), and (\ref{eq2.7.6}),
\[
[\rho(X,1)]^{\doub(j)} = \rho(X,2^j)  \le \rho(X,\ell) \cdot \rho(X,2^j - \ell) \le \rho(X,\ell) \cdot [\rho(X,1)]^{[\doub(j)]-\ell}.
\]
Hence
\begin{equation}\label{eq6.1.14}
[\rho(X,1)]^\ell \le \rho(X,\ell).
\end{equation}
On the other hand, by (\ref{eq2.7.6}), $\rho(X,\ell) \le [\rho(X,1)]^\ell$. Combining that with (\ref{eq6.1.14}), one obtains (\ref{eq6.1.12}).  That completes the proof of Proposition \ref{pr6.1} (that is, Proposition \ref{pr3.5}(III)).
$\square$ \hfill\break

\section{Review of a proof of Proposition \ref{pr3.5}(II)}
\label{sc7}

The setting for the work here in Section \ref{sc7} will be Context \ref{ctxt5.1} together with assumption (A3) in Theorem \ref{th3.3}. That combination will be recorded here for convenient reference:

\begin{context} \label{ctxt7.1} (A) Suppose $X:= (X_k, k\in \Z)$ is a strictly stationary Markov chain with state space $S$ (see Convention \ref{cn2.2}(B)(D) again), such that $X$ is reversible (but not necessarily irreducible).

(B) Let $\mu$ denote the (marginal) distribution (on $(S,\cS))$ of the $S$-valued random variable $X_0$.

(C) Retain all notations from Context \ref{ctxt5.1}(C), and note all observations there.

(D) Note all observations in Context \ref{ctxt5.1}(D).

(E) Assume condition (A3) in Theorem \ref{th3.3}. That is, suppose that for every set $A\in \cS$, there exists a number $c_A\in (0,1)$ such that (see (\ref{eq2.1.1}))

\begin{equation}
\label{eq7.1.1}
\left|P(\{X_0\in A\} \cap \{X_{\doub(n)} \in A\})-[\mu(A)]^2\right| \ll c_A^{\doub(n)} \,\,\hbox{as $n\to \infty$}.
\end{equation}
\end{context}

\begin{remark}\label{rm7.2}
Our goal here in Section \ref{sc7} is to prove that in Context \ref{ctxt7.1}, under all of its assumptions, condition (A2) in Theorem \ref{th3.3} holds:

There exists a number $r\in (0,1)$ such that for every pair of sets $A\in \cS$ and $B\in \cS$, one has (see (\ref{eq2.1.1})) that
\begin{equation}
\label{eq7.2.1}
\left|P(\{X_0 \in A\}\cap \{X_{\doub(n)} \in B\}) - \mu(A) \mu(B)\right| \ll r^{\doub(n)}\,\,\mbox{as\,\,$n\to \infty$.}
\end{equation}
Once that is accomplished, the proof of Proposition \ref{pr3.5}(II) will be complete.
\end{remark}

\begin{definition}\label{df7.3}  For every function $g\in \cL^2_{ub}(\mu)$ (see (\ref{eq5.1.1})), referring to (\ref{eq5.3.5}) and (\ref{eq5.3.7}), define the number $r(g) \in [0,1]$ as follows:
\begin{equation}
\label{eq7.3.1}
r(g) := \sup_{n\in \N} \left[\left( E\left[g(X_0) \cdot g(X_{\doub(n)})\right]\right)^{1/\doub(n)}\right]
= \lim_{n\to \infty}\left[\left(E\left[g(X_0) \cdot g(X_{\doub(n)})\right]\right)^{1/\doub(n)}\right]
\end{equation}
\end{definition}

\begin{remark}\label{rm7.4}  Let us record here for convenient later reference the following facts.

By (\ref{eq5.3.5}), (\ref{eq5.3.7}), and (\ref{eq7.3.1}),

\begin{equation}\label{eq7.4.1}
\mbox{For every $g\in \cL^2_{ub}(\mu)$,} \quad 0\le r(g) \le 1.
\end{equation}
Also, by (\ref{eq5.3.5}) and (\ref{eq7.3.1}) (its first equality),
\begin{equation}\label{eq7.4.2}
\mbox{for every $n\in \N$ and every $g\in \cL^2_{ub}(\mu)$,}\,\,\, 0\le E\left[g(X_0) \cdot g(X_{\doub(n)})\right] \le [r(g)]^{\doub(n)}.
\end{equation}
\end{remark}

\begin{lemma}\label{lm7.5} Suppose $g\in \cL^2_{ub}(\mu)$. Suppose $a$ is an element of $[0,1]$. Refer to (\ref{eq7.4.1}) and (\ref{eq7.4.2}). Then the following four conditions are equivalent:

\noindent (i) $E\left[ g(X_0) \cdot g(X_{\doub(n)})\right] \ll a^{\doub(n)}$ as $n\to \infty$.

\noindent (ii) $E\left[g(X_0) \cdot g(X_{\doub(n)})\right] \le a^{\doub(n)}$ for every $n\in\N$.

\noindent(iii) $\left(E\left[g(X_0)\cdot g(X_{\doub(n)})\right]\right)^{1/\doub(n)} \le a$ for every $n\in\N$.

\noindent(iv) $r(g) \le a$.
\end{lemma}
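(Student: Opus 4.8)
The plan is to prove the four conditions equivalent by running the cycle (i) $\Rightarrow$ (iv) $\Rightarrow$ (iii) $\Rightarrow$ (ii) $\Rightarrow$ (i), after a short separate remark handling the degenerate value $a=0$. In the case $a=0$ one interprets the bound $\ll 0^{\doub(n)}$ in (i) as ``$E[g(X_0)\cdot g(X_{\doub(n)})]=0$ for all large $n$''; then, using $E[g(X_0)\cdot g(X_{\doub(n)})]\ge 0$ from (\ref{eq5.3.5}) together with the monotonicity (\ref{eq5.3.7}), each of (i), (ii), (iii), (iv) collapses to the single statement that $E[g(X_0)\cdot g(X_{\doub(n)})]=0$ for all $n\in\N$, and they are trivially equivalent. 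So the substance is the case $a>0$, which I would treat as follows.

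First I would dispose of the ``bookkeeping'' equivalences (iv) $\Leftrightarrow$ (iii) $\Leftrightarrow$ (ii) and the trivial implication (ii) $\Rightarrow$ (i). For (iv) $\Leftrightarrow$ (iii): the first equality in the definition (\ref{eq7.3.1}) of $r(g)$ exhibits $r(g)$ as the supremum over $n$ of the quantities $\bigl(E[g(X_0)\cdot g(X_{\doub(n)})]\bigr)^{1/\doub(n)}$, so $r(g)\le a$ holds precisely when each of those quantities is $\le a$. For (iii) $\Leftrightarrow$ (ii): since $E[g(X_0)\cdot g(X_{\doub(n)})]\ge 0$ by (\ref{eq5.3.5}) and $a^{\doub(n)}\ge 0$, and $x\mapsto x^{1/\doub(n)}$ is increasing on $[0,\infty)$, the two inequalities say the same thing for each fixed $n$. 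For (ii) $\Rightarrow$ (i): the bound in (ii) forces $\limsup_{n\to\infty}\bigl(E[g(X_0)\cdot g(X_{\doub(n)})]/a^{\doub(n)}\bigr)\le 1<\infty$, which is exactly what $\ll a^{\doub(n)}$ means in the sense of Notations \ref{no2.1}(B).

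The one step with genuine content is (i) $\Rightarrow$ (iv), and the idea is that the $\doub(n)$-th root in (\ref{eq7.3.1}) washes out any multiplicative constant. Assuming (i), I would fix $C\in[1,\infty)$ and $N\in\N$ with $E[g(X_0)\cdot g(X_{\doub(n)})]\le C\,a^{\doub(n)}$ for all $n\ge N$, take $\doub(n)$-th roots to obtain
\[
\bigl(E[g(X_0)\cdot g(X_{\doub(n)})]\bigr)^{1/\doub(n)}\le C^{1/\doub(n)}\cdot a \qquad (n\ge N),
\]
and then let $n\to\infty$. Here the earlier work is essential: by the monotonicity (\ref{eq5.3.7}) from Remark \ref{rm5.3}(D) (ultimately a consequence of reversibility, via Lemma \ref{lm5.2}) the left side is a nondecreasing sequence that converges, and by the second equality in (\ref{eq7.3.1}) its limit is exactly $r(g)$; since $C^{1/\doub(n)}=C^{2^{-n}}\to 1$, the right side tends to $a$, so $r(g)\le a$, which is (iv).

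The main (indeed, essentially the only) obstacle I anticipate is making the passage to the limit in (i) $\Rightarrow$ (iv) legitimate --- that is, being entitled to conclude $r(g)\le a$ rather than merely $\limsup_{n\to\infty}\bigl(E[g(X_0)\cdot g(X_{\doub(n)})]\bigr)^{1/\doub(n)}\le a$. This is precisely what the monotone-limit identity in (\ref{eq7.3.1}) supplies, so the obstacle evaporates on citing Remark \ref{rm5.3}(D); everything else is order-preserving manipulation of nonnegative numbers.
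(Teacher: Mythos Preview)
Your argument is correct and matches the paper's structure: the equivalences (ii)$\Leftrightarrow$(iii)$\Leftrightarrow$(iv) and the implication (ii)$\Rightarrow$(i) are handled identically, and your separate treatment of $a=0$ is fine (the paper leaves that case implicit). The only difference is in the step (i)$\Rightarrow$(iv): you take $\doub(n)$-th roots and pass to the limit directly using $C^{1/\doub(n)}\to 1$, whereas the paper argues by contradiction through an intermediate value $t\in(a,r(g))$; both routes rest on the same monotone-limit identity in (\ref{eq7.3.1}), and yours is arguably a touch more direct.
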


\begin{proof} Refer again to (\ref{eq7.4.1}) and (\ref{eq7.4.2}). Trivially conditions (ii) and (iii) (in Lemma \ref{lm7.5}) are equivalent. By (\ref{eq7.3.1}) (its first equality), conditions (iii) and (iv) are equivalent. Trivially condition (ii) implies condition (i). To complete the proof of Lemma \ref{lm7.5}, it suffices to show that condition (i) implies condition (iv).

\textbf{Proof that (i) $\Longrightarrow$ (iv).}  Suppose condition (i) holds. Suppose condition (iv) \textit{fails} to hold; that is, suppose instead that $a< r(g)$. We shall aim for a contradiction.

Recall the hypothesis $a\in [0,1]$. Let $t$ be a (positive) number such that
\begin{equation}\label{eq7.5.1}
a< t< r(g).
\end{equation}
Then $0\le a<t$ and hence $a/t<1$ and $a^m/t^m = (a/t)^m \to 0$ as $m\to \infty$. In particular, $a^{\doub(n)}/t^{\doub(n)} \to 0$ as $n\to \infty$. Hence by the assumption of condition (i),
\begin{equation}\label{eq7.5.2}
E\left[ g(X_0) \cdot g(X_{\doub(n)})\right] = o\left(t^{\doub(n)}\right)\quad \mbox{as $n\to \infty$.}
\end{equation}

Now by (\ref{eq7.5.1}) and (\ref{eq7.3.1}), there exists a positive integer $N$ such that for every integer $n\ge N$, $E[g(X_0) \cdot g(X_{\doub(n)})]^{1/\doub(n)} >t$ and hence $E[g(X_0) \cdot g(X_{\doub(n)})] >t^{\doub(n)}$. But that contradicts (\ref{eq7.5.2}). Hence it must be the case after all that (if condition (i) holds) condition (iv) must hold. That completes the proof that (i) $\Longrightarrow$ (iv), and the proof of Lemma \ref{lm7.5}.
\end{proof}

\begin{definition}\label{df7.6}  For every set $A\in \cS$, define the (``centered indicator'') function $J_A:S \to \R$ as follows:

For every $s\in S$,
\begin{equation}\label{eq7.6.1}
J_A(s) = I_A(s) - \mu(A)
\end{equation}
(where $I_A :S \to \{0,1\}$ denotes the indicator function of the set $A$).
\end{definition}

\begin{remark}\label{rm7.7}  The comments in this Remark are downright trivial, but are being put on record here for later convenient reference.

(A) For a given set $A\in \cS$, the following comments hold: For any $s\in S$,
\[
-1 \le -\mu(A) \le I_A(s) - \mu(A) \le 1-\mu(A) \le 1,
\]
hence $|J_A(s) | = |I_A(s) - \mu(A)| \le 1$, hence
\[
\int_S J^2_Ad\mu \le \int_S1^2d\mu = \mu(S) = 1.
\]
Also,
\[
\int_SJ_Ad\mu = \int_S(I_A - \mu(A)) d\mu =\int_S I_Ad\mu - \int_S \mu(A)d\mu = \mu(A) - \mu(A) \cdot \mu (S)
= \mu(A) - \mu(A) \cdot 1 =0.
\]
From these two observations, one has that
\begin{equation}\label{eq7.7.1}
\mbox{for all $A\in \cS$,} \quad J_A \in \cL^2_{ub0}(\mu).
\end{equation}

(B) For a given $A\in \cS$ and a given $k\in Z$, one of course has the equality of $\{0,1\}$-valued random variables
\begin{equation}\label{eq7.7.2}
I_A(X_k) = I(X_k\in A)
\end{equation}
(again, as in the second paragraph after eq.\ (\ref{eq5.2.2})).
For a given $A\in \cS$ and a given $k\in \Z$, by (\ref{eq7.7.2}),
\begin{equation}
\label{eq7.7.3}
E[I_A(X_k)] = E[I(X_k\in A)] = P(X_k\in A) = \mu(A).
\end{equation}
(Alternatively, $E[I_A(X_k)] = \int_S I_Ad\mu = \mu(A)$.)

(C) For any given $k\in \Z$ and any given pair of sets $A \in \cS$ and $B\in \cS$,
\begin{align}\label{eq7.7.4}
E\left[J_A(X_0) \cdot J_B(X_k)\right]&=E\left[\left(I_A(X_0) - \mu(X)\right)\cdot \left(I_B(X_k) - \mu(B)\right)\right] \nonumber\\
&=E\left[I_A(X_0) \cdot I_B(X_k)\right] - E\left[I_A(X_0) \cdot \mu(B)\right] -E\left[\mu(A) \cdot I_B(X_k)\right] + \mu(A)\cdot \mu(B) \nonumber\\
&=E\left[I(X_0\in A) \cdot I(X_k\in B)\right] - \mu(B) \cdot E\left[I_A(X_0)\right] - \mu(A) \cdot E\left[I_B(X_k)\right] +\mu(A)\cdot \mu(B) \nonumber\\
&=E\left[I\left(\left\{X_0\in A\right\} \cap \left\{X_k\in B\right\}\right)\right] - \mu(B) \cdot\mu(A) -\mu(A) \cdot \mu(B) + \mu(A) \cdot \mu(B) \nonumber\\
&= P\left(\left\{X_0\in A\right\} \cap \left\{ X_k \in B\right\}\right) - \mu(A) \cdot \mu(B).
\end{align}

(D) For any given $\ell\in \Z$ and any given pair of sets $A\in \cS$
 and $B\in\cS$, one has that $\mu(A) \cdot \mu(B)\le \min        \{\mu(A), \mu(B)\}$, and hence by (\ref{eq7.7.4}),
 \begin{align}
-\min&\{\mu(A),\mu(B)\} \le -\mu(A) \cdot \mu(B)\nonumber\\
 &\le P\left(\{X_0\in A\} \cap \{X_\ell \in B\}\right) -\mu(A) \cdot \mu(B) =E\left[J_A(X_0) \cdot J_B(X_\ell)\right]\nonumber\\
 &\le P\left( \{X_0\in A\} \cap \{X_\ell \in B\}\right)\le \min \left\{P(X_0\in A), P(X_\ell \in B)\right\} =\min\{\mu(A) \cdot \mu(B) \},\nonumber
 \end{align}
 that is,
\[
 -\min \{\mu(A),\mu(B)\} \le E\left[J_A(X_0) \cdot J_B(X_\ell)\right]\le \min \{\mu(A),\mu(B)\}.
 \]
Thus for all $\ell \in \Z$, for all $A\in \cS$ and $B\in \cS$, one has that
 \begin{equation}\label{eq7.7.5}
 \left|E\left[J_A(X_0) \cdot J_B(X_\ell)\right]\right| \le \min \{\mu(A),\mu(B)\}.
 \end{equation}
\end{remark}

\begin{remark}\label{rm7.8}
(A) Refer to (\ref{eq7.1.1}) and (\ref{eq7.7.4}). By (\ref{eq7.7.4}), the assumption of condition (A3) in Theorem \ref{th3.3}, as formulated in Context \ref{ctxt7.1}(E) (with eq.\ (\ref{eq7.1.1})), can be formulated as follows: For every set $A\in \cS$, there exists a number $c_A\in (0,1)$ such that (see (\ref{eq2.1.1}))
\begin{equation}\label{eq7.8.1}
E\left[J_A(X_0) \cdot J_A(X_{\doub(n)})\right] \ll c^{\doub(n)}_A\,\,\,\mbox{as $n\to \infty$.}
\end{equation}

(B) By Lemma \ref{lm7.5}, the assumption of condition (A3), as formulated in the entire sentence containing (\ref{eq7.8.1}), can be reformulated again, as follows:

For every set $A\in \cS$, there exists a number $c_A\in (0,1)$ such that (see (\ref{eq2.1.1})), $r(J_A) \le c_A$. Thus obviously condition (A3) can be reformulated as follows:
\begin{equation}\label{eq7.8.2}
\mbox{For all $A\in \cS$,}\quad r(J_A) <1.
\end{equation}

(C) Eq.\ (\ref{eq7.8.2}) is our new, simplified formulation of condition (A3) (in Theorem \ref{th3.3}) -- that is, Context \ref{ctxt7.1}(E). Thus (see Remark \ref{rm7.2}) our task here is Section \ref{sc7} is to prove that in Context \ref{ctxt7.1}(A)(B)(C)(D) -- that is, in Context \ref{ctxt5.1} -- if (\ref{eq7.8.2}) holds then condition (A2) (in Theorem \ref{th3.3}) holds.
\end{remark}

\begin{definition}\label{df7.9}  Refer to Definition \ref{df7.6}, eq.\ (\ref{eq7.7.1}), Definition \ref{df7.3}, and eq.\ (\ref{eq7.4.1}). For every set $D\in \cS$, define the number $R(D) \in [0,1]$ by
\begin{equation}\label{eq7.9.1}
R(D) := \sup r(J_A)
\end{equation}
where the supremum is taken over all sets $A\in \cS$ such that $A\subset D$.
\end{definition}

\begin{remark}\label{rm7.10}
(A) By (\ref{eq7.9.1}) and (\ref{eq7.4.1}), one has the following:
\begin{equation}\label{eq7.10.1}
\mbox{For every set $D\in \cS$,} \quad 0 \le r(J_D) \le R(D) \le 1.
\end{equation}

(B) As a trivial consequence of (\ref{eq7.9.1}) (simply involving taking the supremum over a larger class of sets), one  has the following:
\begin{equation}\label{eq7.10.2}
\mbox{If $B,D \in \cS$ and $B\subset D$, then}\,\,\,R(B) \le R(D).
\end{equation}
\end{remark}

\textbf{Note.} The next six lemmas will be devoted to proving that (see Definition \ref{df7.9}) $R(S) <1$. Once that is accomplished, the rest of the task here in Section \ref{sc7} -- the presentation of a proof of Proposition \ref{pr3.5}(II), as outlined in the scheme described in Remark \ref{rm7.8}(C) -- will take only a very little more work.
\medskip

\begin{lemma}\label{lm7.11} Refer to Definition \ref{df7.6}, eq.\ (\ref{eq7.7.1}), Definition \ref{df7.3}, and eq.\ (\ref{eq7.4.1}). In Context \ref{ctxt7.1}, if $A$ and $B$ are disjoint sets $\in \cS$, then the following three statements hold:

(I) The functions $J_A : S\to \R$, $J_B : S\to \R$, and $J_{A \cup B}: S \to \R$ (see (\ref{eq7.6.1})) satisfy
\begin{equation}\label{eq7.11.1}
J_{A\cup B}(s) = J_A(s) +J_B(s)\quad\mbox{for every $s\in S$.}
\end{equation}

(II) One has that
\begin{equation}\label{eq7.11.2}
r(J_{A\cup B}) \le \max\{r(J_A),r(J_B)\}.
\end{equation}

(III) If also $r(J_A) \not= r(J_B)$, then
\begin{equation}\label{eq7.11.3}
r(J_{A\cup B}) = \max\{ r(J_A), r(J_B)\}.
\end{equation}
\end{lemma}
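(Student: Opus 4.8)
The plan is to dispatch (I) with a one-line algebraic identity, prove the inequality (II) by a triangle-inequality estimate in $\cL^2(\mu)$, and then deduce the equality (III) from (II) by a short argument by contradiction. Throughout, abbreviate $Q_n(f):=E[f(X_0)f(X_{\doub(n)})]$ for $f\in\cL^2(\mu)$ and $n\in\N$; by (\ref{eq5.3.1}), applied with $m=\doub(n-1)$ so that $2m=\doub(n)$, one has the identity $Q_n(f)=\|E[f(X_0)\mid\sigma(X_{\doub(n-1)})]\|_2^2$ (in particular $Q_n(f)\ge 0$), and by (\ref{eq7.4.2}) one has $Q_n(g)\le[r(g)]^{\doub(n)}$ whenever $g\in\cL^2_{ub}(\mu)$. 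Statement (I) is then immediate: since $A\cap B=\emptyset$ we have $I_{A\cup B}=I_A+I_B$ on $S$ and $\mu(A\cup B)=\mu(A)+\mu(B)$, and subtracting these gives $J_{A\cup B}=J_A+J_B$, which is (\ref{eq7.11.1}).

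For (II), apply the identity $Q_n(f)=\|E[f(X_0)\mid\sigma(X_{\doub(n-1)})]\|_2^2$ to $f=J_{A\cup B}=J_A+J_B$, use the linearity of conditional expectation, and use the triangle inequality in $\cL^2(\mu)$, to obtain, for every $n\in\N$,
\[
Q_n(J_{A\cup B})^{1/2}\le Q_n(J_A)^{1/2}+Q_n(J_B)^{1/2}.
\]
Since $J_A,J_B\in\cL^2_{ub}(\mu)$ by (\ref{eq7.7.1}), the right-hand side is at most $[r(J_A)]^{\doub(n)/2}+[r(J_B)]^{\doub(n)/2}\le 2\,c^{\doub(n)/2}$, where $c:=\max\{r(J_A),r(J_B)\}$. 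Hence $Q_n(J_{A\cup B})\le 4\,c^{\doub(n)}$ for every $n$; raising this to the power $1/\doub(n)$ and letting $n\to\infty$ (so $4^{1/\doub(n)}\to 1$) gives $r(J_{A\cup B})\le c$, which is (\ref{eq7.11.2}). (Alternatively, the bound $Q_n(J_{A\cup B})\le 4c^{\doub(n)}$ feeds directly into Lemma \ref{lm7.5}.)

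For (III), assume without loss of generality that $r(J_A)>r(J_B)$; by (II) it remains only to show $r(J_{A\cup B})\ge r(J_A)$. Suppose toward a contradiction that $c:=r(J_{A\cup B})<r(J_A)$, and apply the estimate of (II) to the decomposition $J_A=J_{A\cup B}+(-J_B)$, valid by (I). Since $\|{-Y}\|_2=\|Y\|_2$ gives $Q_n(-J_B)=Q_n(J_B)$, the same chain of inequalities yields, for every $n\in\N$,
\[
Q_n(J_A)^{1/2}\le Q_n(J_{A\cup B})^{1/2}+Q_n(J_B)^{1/2}\le c^{\doub(n)/2}+[r(J_B)]^{\doub(n)/2}\le 2\,d^{\doub(n)/2},
\]
where $d:=\max\{c,r(J_B)\}$ and the middle inequality uses (\ref{eq7.4.2}) for $J_{A\cup B}$ (whose $r$-value is $c$) and for $J_B$. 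Squaring, taking $1/\doub(n)$-th powers, and letting $n\to\infty$ yields $r(J_A)\le d$. But $c<r(J_A)$ by assumption and $r(J_B)<r(J_A)$ by the hypothesis $r(J_A)\ne r(J_B)$, so $d<r(J_A)$ --- a contradiction. Therefore $r(J_{A\cup B})=r(J_A)=\max\{r(J_A),r(J_B)\}$, which is (\ref{eq7.11.3}).

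The only genuine obstacle is part (III). A ``direct'' application of the (II)-estimate to $J_A=J_{A\cup B}+(-J_B)$ gives nothing: without extra input, (\ref{eq7.4.2}) only bounds $Q_n(J_{A\cup B})$ by $[r(J_A)]^{\doub(n)}$ (we know merely $r(J_{A\cup B})\le r(J_A)$ from (II)), so one recovers the vacuous $r(J_A)\le r(J_A)$. It is the strict inequality $r(J_{A\cup B})<r(J_A)$, assumed for contradiction, that forces a genuinely smaller exponential bound on $Q_n(J_{A\cup B})$, and it is precisely the hypothesis $r(J_A)\ne r(J_B)$ that keeps $d=\max\{c,r(J_B)\}$ strictly below $r(J_A)$, thereby closing the loop.
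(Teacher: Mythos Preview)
Your proof is correct and takes a genuinely different route from the paper's. The paper proves (II) by expanding $E[J_{A\cup B}(X_0)J_{A\cup B}(X_m)]$ into four terms, invoking reversibility explicitly to identify the two cross terms, then bounding each term via (\ref{eq5.3.2}) and (\ref{eq7.3.1}) to get $E[J_{A\cup B}(X_0)J_{A\cup B}(X_{\doub(n)})]\le 4[\max\{r(J_A),r(J_B)\}]^{\doub(n)}$ and appealing to Lemma~\ref{lm7.5}. For (III), the paper assumes $r(J_A)<r(J_B)$ and shows, via the same four-term expansion, that the ratio $E[J_{A\cup B}(X_0)J_{A\cup B}(X_{\doub(n)})]/E[J_B(X_0)J_B(X_{\doub(n)})]\to 1$, then deduces $r(J_{A\cup B})=r(J_B)$ by taking $\doub(n)$-th roots. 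Your approach is cleaner: you recognize from (\ref{eq5.3.1}) that $Q_n(f)^{1/2}$ is an $\cL^2$-norm of a conditional expectation, so the triangle inequality is immediate and yields (II) in one line; then (III) follows by applying the same triangle inequality to the decomposition $J_A=J_{A\cup B}-J_B$ and arguing by contradiction. Both arguments ultimately rest on reversibility through (\ref{eq5.3.1})/Lemma~\ref{lm5.2}, but your treatment hides the bilinear expansion entirely inside the norm structure, which is both shorter and more conceptual. The paper's expansion has the minor advantage of making the role of the cross term $E[J_A(X_0)J_B(X_m)]$ explicit (which is reused later in Section~\ref{sc7}), but for the lemma as stated your argument is preferable.
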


\textbf{Proof of (I).} Suppose $s\in S$. Since by hypothesis the sets $A$ and $B$ are disjoint, the indicator functions of $A$, $B$, and $A\cup B$ satisfy $I_{A\cup B}(s) = I_A(s) + I_B(s)$. Hence by (\ref{eq7.6.1}) and again the hypothesis that $A$ and $B$ are disjoint,
\begin{align}
J_{A\cup B}(s) &= I_{A\cup B}(s)  - \mu(A\cup B) = [I_A(s) +I_B(s)] -[\mu(A) + \mu(B)] \nonumber\\
&= [I_A(s) -\mu(A) ] + [I_B(s) - \mu(B)] =J_A(s) +J_B(s).\nonumber
\end{align}
Thus (\ref{eq7.11.1}) holds -- that is Statement I holds.
\medskip

\textbf{Preparation for the proofs of Statements (II) and (III).}

Here we shall carry out some calculations that will be pertinent to the proofs of both (II) and (III).

By (\ref{eq7.11.1}), for (say) every $m\in \N$,
\begin{align} \label{eq7.11.4}
E&\left[J_{A\cup B}(X_0) \cdot J_{A\cup B}(X_k)\right] = E\left[(J_A(X_0)) + (J_B(X_0)) \cdot (J_A(X_m) + J_B(X_m))\right]\nonumber\\
&=E\left[J_A(X_0) \cdot J_A(X_m)\right] + E\left[J_B(X_0) \cdot J_B(X_m)\right] + E\left[ J_A(X_0) \cdot J_B(X_m)\right] +E\left[ J_A(X_m) \cdot J_B(X_0)\right].
\end{align}

For any given $m\in \N$, the following statements hold:

By reversibility (see Context \ref{ctxt7.1} again), the random vectors $(X_0, X_m)$ and $(X_m,X_0)$ have the same distribution (on $(S^2, \cS^2))$.  Hence the random variables $J_A(X_0) \cdot J_B(X_m)$ and $J_A(X_m) \cdot J_B(X_0)$ have the same distributions (on $(\R,\cR))$. Hence $E[J_A(X_0) \cdot J_B(X_m)] = E[J_A(X_m) \cdot J_B(X_0)]$.

Hence by (\ref{eq7.11.4}), for any $m\in \N$,
\begin{align}\label{eq7.11.5}
E\left[J_{A \cup B}(X_0) \cdot J_{A\cup B}(X_m)\right] = &E\left[J_A(X_0) \cdot J_A(X_m)\right] + E\left[ J_B(X_0) \cdot J_B(X_m)\right] \nonumber\\
&+2 \cdot E\left[ J_A(X_0) \cdot J_B(X_m)\right].
\end{align}

By (\ref{eq7.7.1}) (applied to $A$ and to $B$) and Remark \ref{rm5.3}(B), for any $m\in \N$, one has that
\[
\left| E\left[J_A(X_0) \cdot J_B(X_m)\right]\right| \le \left(E\left[ J_A(X_0) \cdot J_A(X_{2m})\right]\right)^{1/2}.
\]
Hence by (\ref{eq7.3.1}) (its first equality), for any $n\in \N$,
\begin{align}\label{eq7.11.6}
&\left|E\left[J_A(X_0) \cdot J_B\left(X_{\doub(n)}\right)\right]\right| \le \left(E\left[J_A(X_0) \cdot J_A\left(X_{\doub(n+1)}\right)\right]\right)^{1/2} \nonumber\\
&\le \left([r(J_A)]^{\doub(n+1)}\right)^{1/2} = \left[ r(J_A)\right]^{\doub(n)}.
\end{align}
By an analogous argument and (\ref{eq5.3.3}), for any $n\in \N$,
\begin{equation}\label{eq7.11.7}
0\le E\left[J_A(X_0) \cdot J_A(X_{\doub(n)})\right] \le [r(J_A)]^{\doub(n)}
\end{equation}
and
\begin{equation}
\label{eq7.11.8}
0 \le E\left[J_B(X_0) \cdot J_B(X_{\doub(n)})\right] \le \left[r(J_B)\right]^{\doub(n)}.
\end{equation}
Applying (\ref{eq7.11.6}), (\ref{eq7.11.7}), and (\ref{eq7.11.8}) to (\ref{eq7.11.5}), one has that for any $n\in\N$,
\begin{equation}\label{eq7.11.9}
E\left[J_{A\cup B}(X_0) \cdot J_{A\cup B}(X_{\doub(n)})\right] \le 3 \cdot [r(J_A)]^{\doub(n)} + [r(J_B)]^{\doub(n)}
\le 4\cdot \left[\max\{r(J_A),r(J_B)\}\right]^{\doub(n)}.
\end{equation}

\textbf{Proof of Statement (II).} By (the entire sentence of) (\ref{eq7.11.9}), one has that
\[
E\left[J_{A\cup B}(X_0) \cdot J_{A\cup B} (X_{\doub(n)})\right] \ll \left[ \max \{r(J_A), r(J_B)\}\right]^{\doub(n)} \,\,\,\mbox{as $n\to \infty.$}
\]
Hence by Lemma \ref{lm7.5} (and eq.\ (\ref{eq7.4.1}) and (\ref{eq7.7.1}), giving $\max\{r(J_A),r(J_B)\} \le 1$), and also eq.\ (\ref{eq7.7.1}) applied to the set $A\cup B$), one  has that eq.\ (\ref{eq7.11.2}) holds. That is, Statement (II) holds.
\medskip

 \textbf{Proof of Statement (III).} Now as in Statement (III), in addition to the assumption that $A$ and $B$ are disjoint members of $\cS$, assume that $r(J_A) \not= r(J_B)$.  We assume without loss of generality that
\[
r(J_A) <r(J_B).
\]

Let $t$ be a real number such that $r(J_A) <t < r(J_B)$. By (\ref{eq7.4.1}),
\begin{equation}\label{eq7.11.10}
0 \le r(J_A) < t< r(J_B) \le 1.
\end{equation}

Of course, since (by (\ref{eq7.11.10})) $0\le r(J_A)/t <1$, one has that
\begin{equation}
\label{eq7.11.11}
[r(J_A)]^{\doub(n)}/t^{\doub(n)} = [r(J_A)/t]^{\doub(n)} \longrightarrow 0 \quad\mbox{as $n\to \infty$.}
\end{equation}

By (\ref{eq7.11.10}) and (\ref{eq7.3.1}) (and (\ref{eq5.3.3})), one has that for all $n\in \N$ sufficiently large,
\[
\left(E\left[J_B(X_0) \cdot J_B(X_{\doub(n)})\right]\right)^{1/\doub(n)} >t >0
\]
and hence
\begin{equation}\label{eq7.11.12}
E\left[ J_B(X_0) \cdot J_B(X_{\doub(n)}) \right] > t^{\doub(n)} >0.
\end{equation}
By (\ref{eq7.11.7}) and (\ref{eq7.11.11}),
\[
\left(E\left[ J_A(X_0) \cdot J_A(X_{\doub(n)})\right]\right)/t^{\doub(n)} \longrightarrow 0 \,\,\,\mbox{as $n\to \infty$.}
\]
Hence by the entire sentence containing (\ref{eq7.11.12}) (note the phrase ``for all $n\in\N$ sufficiently large'' in that sentence),
\begin{equation}
\label{eq7.11.13}
\frac{ E\left[J_A(X_0) \cdot J_A(X_{\doub(n)})\right]}{E\left[ J_B(X_0) \cdot J_B(X_{\doub(n)})\right]} \,\,\longrightarrow 0 \,\,\,\mbox{as $n\to \infty$.}
\end{equation}
Similarly, by (\ref{eq7.11.6}) and (\ref{eq7.11.11}),
\[
\left(E\left[ J_A(X_0) \cdot J_B(X_{\doub(n)})\right]\right)/t^{\doub(n)} \longrightarrow 0 \,\,\,\mbox{as $n\to \infty$,}
\]
and hence again by the entire sentence containing (\ref{eq7.11.12}),
\begin{equation}\label{eq7.11.14}
\frac{E\left[J_A(X_0) \cdot J_B(X_{\doub(n)})\right]}{E\left[J_B(X_0) \cdot J_B(X_{\doub(n)})\right]} \longrightarrow 0 \,\,\,\mbox{as $n\to \infty$.}
\end{equation}
By (\ref{eq7.11.5}), (\ref{eq7.11.13}), and (\ref{eq7.11.14}),
\begin{equation}\label{eq7.11.15}
\frac{E\left[ J_{A\cup B}(X_0) \cdot J_{A\cup B}(X_{\doub(n)})\right]}{E\left[J_B(X_0) \cdot J_B(X_{\doub(n)})\right]} \longrightarrow 1 \,\,\,\mbox{as $n\to \infty$.}
\end{equation}

For each $n\in \N$, just for convenience, let $\theta_n$ denote the left side of (\ref{eq7.11.15}). For any given $\varepsilon \in (0,1)$, one has the following:  First, by (\ref{eq7.11.15}) itself, $1-\varepsilon < \theta_n < 1+\varepsilon$ for all $n\in \N$ sufficiently large; and then by trivial arithmetic for all such (sufficiently large) $n\in \N$,
\[
1-\varepsilon <(1-\varepsilon)^{1/\doub(n)}<\theta_n^{1/\doub(n)} <(1+\varepsilon)^{1/\doub(n)} <1+\varepsilon.
\]
It follows that $\theta^{1/\doub(n)}_n \longrightarrow 1$ as $n\to \infty$.

Let us display that last fact (again see (\ref{eq7.11.15})):
\begin{equation}\label{eq7.11.16}
\frac{\left(E\left[ J_{A\cup B}(X_0) \cdot J_{A\cup B}(X_{\doub(n)})\right]\right)^{1/\doub(n)}}{\left(E\left[J_B(X_0) \cdot J_B(X_{\doub(n)})\right]\right)^{1/\doub(n)}} \longrightarrow 1 \,\,\,\mbox{as $n\to \infty$.}
\end{equation}
By (\ref{eq7.3.1}), one has that in the fraction in the left side of (\ref{eq7.11.16}), the denominator converges to $r(J_B)$ as $n\to \infty$.  (Of course $0<r(J_B) \le 1$ by (\ref{eq7.11.10}).) Hence by (\ref{eq7.11.16}) itself, the numerator in that fraction converges to $r(J_B)$ as $n\to \infty$. Of course by (\ref{eq7.3.1}), the numerator in that fraction also converges to $r(J_{A\cup B})$ as $n\to \infty$. Hence $r(J_{A\cup B}) = r(J_B)$. Hence by (\ref{eq7.11.10}), eq.\ (\ref{eq7.11.3}) holds.  Thus Statement (III) holds. That completes the proof of Lemma \ref{lm7.11}.
\hfill$\square$\break

\begin{lemma}\label{lm7.12} Suppose $C$ and $D$ are disjoint sets $\in \cS$. Then (see (\ref{eq7.9.1}) and (\ref{eq7.10.1}))
\begin{equation}\label{eq7.12.1}
R(C\cup D) = \max\{R(C),R(D)\}.
\end{equation}
\end{lemma}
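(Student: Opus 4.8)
The plan is to establish the two inequalities $R(C\cup D)\ge\max\{R(C),R(D)\}$ and $R(C\cup D)\le\max\{R(C),R(D)\}$ separately; together they give (\ref{eq7.12.1}).

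The first inequality is immediate from the monotonicity of $R$: since $C\subset C\cup D$ and $D\subset C\cup D$, eq.\ (\ref{eq7.10.2}) yields $R(C)\le R(C\cup D)$ and $R(D)\le R(C\cup D)$, hence $\max\{R(C),R(D)\}\le R(C\cup D)$.

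For the reverse inequality I would fix an arbitrary set $A\in\cS$ with $A\subset C\cup D$ and show that $r(J_A)\le\max\{R(C),R(D)\}$; taking the supremum over all such $A$ (see (\ref{eq7.9.1})) then gives $R(C\cup D)\le\max\{R(C),R(D)\}$. Given such an $A$, write $A=(A\cap C)\cup(A\cap D)$. Because $C$ and $D$ are disjoint, $A\cap C$ and $A\cap D$ are disjoint members of $\cS$, so Lemma \ref{lm7.11}(II) applies and gives $r(J_A)\le\max\{r(J_{A\cap C}),r(J_{A\cap D})\}$. Since $A\cap C\subset C$, the definition (\ref{eq7.9.1}) of $R(C)$ gives $r(J_{A\cap C})\le R(C)$, and likewise $r(J_{A\cap D})\le R(D)$; hence $r(J_A)\le\max\{R(C),R(D)\}$, as desired. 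The degenerate cases where $A\cap C$ or $A\cap D$ is empty cause no trouble, since the empty set is disjoint from every set, $J_\emptyset\equiv 0$ so $r(J_\emptyset)=0$, and Lemma \ref{lm7.11}(II) remains valid; alternatively, in those cases one simply has $A\subset D$ or $A\subset C$ directly.

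There is no real obstacle here: the argument is essentially bookkeeping built on the disjoint-union bound $r(J_{A\cup B})\le\max\{r(J_A),r(J_B)\}$ of Lemma \ref{lm7.11}(II) together with the monotonicity (\ref{eq7.10.2}) of $R$. The only point requiring a moment's care is to check that splitting $A$ along the partition $\{C,D\}$ of $C\cup D$ really places both pieces inside $C$ and $D$ respectively, which is exactly where the disjointness hypothesis on $C$ and $D$ is used.
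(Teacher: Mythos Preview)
Your proof is correct and follows essentially the same approach as the paper: both establish the two inequalities separately, obtaining $\ge$ from the monotonicity (\ref{eq7.10.2}) and $\le$ by splitting an arbitrary $A\subset C\cup D$ as $(A\cap C)\cup(A\cap D)$, applying Lemma \ref{lm7.11}(II), and then invoking the definition (\ref{eq7.9.1}) of $R$. Your extra remark about the degenerate empty-intersection cases is a harmless addition the paper leaves implicit.
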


\begin{proof} By (\ref{eq7.10.2}), $R(C) \le R(C\cup D)$ and $R(D) \le R(C\cup D)$. Hence
\begin{equation}\label{eq7.12.2}
R(C\cup D) \ge \max\{R(C),R(D)\}.
\end{equation}
Our task now is to prove the opposite inequality.

Since the sets $C$ and $D$ are disjoint (by hypothesis), one has that for any set $A\in \cS$ such that $A\subset C\cup D$, the following observations hold: The sets $A\cap C$ and $A\cap D$ are disjoint and their union is $A$; and hence by Lemma \ref{lm7.11}(II) and then (\ref{eq7.9.1}),
\[
r(J_A) = r\left( J_{(A\cap C)\cup(A\cap D)}\right) \le \max \left\{ r(J_{A\cap C}),r(J_{A\cap D})\right\}
\le \max \{R(C),R(D)\}.
\]

It follows (again see (\ref{eq7.9.1})) that $R(C\cup D) \le \max \{R(C),R(D)\}$. Combining that with (\ref{eq7.12.2}), one has that (\ref{eq7.12.1}) holds. That completes the proof of Lemma \ref{lm7.12}.
\end{proof}

\begin{lemma}\label{lm7.13}
Suppose $C$ and $D$ are sets $\in \cS$ such that $C\subset D$. If $r(J_D) <r(J_C)$, then $r(J_{D-C}) = r(J_C)$.
\end{lemma}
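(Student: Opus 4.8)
The plan is to decompose $D$ as the disjoint union $D = C \cup (D-C)$ and apply Lemma \ref{lm7.11} (parts (II) and (III)) to the disjoint sets $C$ and $D-C$, deriving the conclusion by a short contradiction argument from the hypothesis $r(J_D) < r(J_C)$.

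First I would record that since $C \subset D$, the sets $C$ and $D-C$ are disjoint members of $\cS$ with $C \cup (D-C) = D$. Then Lemma \ref{lm7.11}(II) gives immediately that $r(J_D) = r(J_{C\cup(D-C)}) \le \max\{r(J_C), r(J_{D-C})\}$. Combined with the hypothesis $r(J_D) < r(J_C)$, this already forces $r(J_C)$ to be attained by the right-hand maximum only through the $r(J_C)$ term, but the real work is to pin down $r(J_{D-C})$ exactly.

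Next I would argue by contradiction: suppose $r(J_{D-C}) \ne r(J_C)$. Then Lemma \ref{lm7.11}(III), applied to the disjoint sets $C$ and $D-C$, yields $r(J_D) = r(J_{C\cup(D-C)}) = \max\{r(J_C), r(J_{D-C})\} \ge r(J_C)$. This contradicts the hypothesis $r(J_D) < r(J_C)$. Hence $r(J_{D-C}) = r(J_C)$, which is the desired conclusion.

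There is essentially no obstacle here; the lemma is a direct corollary of Lemma \ref{lm7.11}. The only point requiring a sentence of care is confirming that $C$ and $D-C$ are genuinely disjoint and cover $D$ (so that Lemma \ref{lm7.11} applies with $A = C$, $B = D-C$), and that $J_D$, $J_C$, $J_{D-C}$ all lie in $\cL^2_{ub0}(\mu)$ by (\ref{eq7.7.1}) so that the quantities $r(\cdot)$ are defined via Definition \ref{df7.3}.
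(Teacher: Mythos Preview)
Your proof is correct and matches the paper's own argument essentially line for line: both suppose $r(J_{D-C}) \ne r(J_C)$, invoke Lemma \ref{lm7.11}(III) on the disjoint decomposition $D = C \cup (D-C)$ to get $r(J_D) = \max\{r(J_C), r(J_{D-C})\} \ge r(J_C)$, and note the contradiction with $r(J_D) < r(J_C)$. Your preliminary invocation of Lemma \ref{lm7.11}(II) is harmless but not actually needed, and the paper omits it.
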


\begin{proof}
If instead $r(J_{D-C}) \not= r(J_C)$, then by the hypothesis here, together with Lemma \ref{lm7.11}(III),
\[
r(J_D) = \max\{r(J_{D-C}), r(J_C)\} \ge r(J_C) >r(J_D),
\]
a contradiction.

Hence $r(J_{D-C}) = r(J_C)$ after all. Thus Lemma \ref{lm7.13} holds.
\end{proof}

\textbf{Note.} In Lemma \ref{lm7.16} below, it will be shown that $R(S)<1$. The proof will involve the contrary supposition $R(S) =1$ (see (\ref{eq7.10.1})) and a resulting contradiction. In order to set up that contradiction, Lemma \ref{lm7.14} and Lemma \ref{lm7.15} below will first explore some (ultimately ``self-contradicting'') consequences of (directly or indirectly) the supposition $R(S) =1$.

\begin{lemma}\label{lm7.14}
In Context \ref{ctxt7.1} (that is, Context \ref{ctxt5.1} together with eq.\ (\ref{eq7.8.2})), the following holds:

Suppose $0<\varepsilon <1$. Suppose $D \in \cS$ and $R(D) =1$. Then there exist sets $A$, $B\in \cS$ such that
\begin{equation}\label{eq7.14.1}
\mbox{$A$ and $B$ are disjoint, $A\cup B \subset D$,}\,\,r(J_A)> 1-\varepsilon, \,\,\,\mbox{and $R(B) =1$.}
\end{equation}
\end{lemma}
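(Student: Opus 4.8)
The plan is to extract from $D$ a single set $C$ whose centered indicator has $r(J_C)$ within $\varepsilon$ of the extreme value $1$, to show that $C$ and its complement $D-C$ in $D$ have $R$-values whose maximum is $1$, and then in each of the two resulting cases to exhibit the required pair $(A,B)$ --- with the roles of the two pieces swapped in the case that is not immediate.

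First I would use the standing hypothesis (\ref{eq7.8.2}), which gives $r(J_D)<1$, together with $R(D)=1$ and the definition (\ref{eq7.9.1}) of $R(D)$ as the supremum of $r(J_A)$ over the sets $A\in\cS$ with $A\subset D$: since $\max\{1-\varepsilon,\,r(J_D)\}<1=R(D)$, there is a set $C\in\cS$ with $C\subset D$ and $r(J_C)>\max\{1-\varepsilon,\,r(J_D)\}$. In particular $r(J_D)<r(J_C)$, so Lemma \ref{lm7.13} (applied with this $C\subset D$) gives $r(J_{D-C})=r(J_C)>1-\varepsilon$. Moreover $C$ and $D-C$ are disjoint with union $D$, so Lemma \ref{lm7.12} gives $\max\{R(C),\,R(D-C)\}=R(D)=1$.

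Next I would split into two cases, which by (\ref{eq7.10.1}) are exhaustive. If $R(D-C)=1$, set $A:=C$ and $B:=D-C$; these are disjoint, $A\cup B=D\subset D$, $r(J_A)=r(J_C)>1-\varepsilon$, and $R(B)=1$, so (\ref{eq7.14.1}) holds. If instead $R(D-C)<1$, then $R(C)=1$; here one uses that $R(D-C)\ge r(J_{D-C})>1-\varepsilon$ by (\ref{eq7.10.1}), so by (\ref{eq7.9.1}) there is a set $A\in\cS$ with $A\subset D-C$ and $r(J_A)>1-\varepsilon$. Taking $B:=C$, the sets $A$ and $B$ are disjoint (since $A\subset D-C$), $A\cup B\subset D$, $r(J_A)>1-\varepsilon$, and $R(B)=R(C)=1$, so again (\ref{eq7.14.1}) holds.

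The only step requiring a genuine idea is the second case: the naive choice ``$A:=C$, $B:=D-C$'' \emph{fails} exactly when $R(D-C)<1$, and the fix is to reverse the roles, using the near-extremal set $C$ itself as $B$ (legitimate precisely because $R(C)=1$ in that case) while extracting the small set $A$ from inside $D-C$. What makes this possible is that Lemma \ref{lm7.13} transfers the value $r(J_C)$ over to the complement $D-C$, and for that the hypothesis $r(J_D)<1$ --- i.e.\ the standing assumption (\ref{eq7.8.2}) --- is indispensable; no iteration or limiting argument is needed at this stage.
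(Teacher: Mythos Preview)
Your proof is correct. It follows the same overall scaffolding as the paper's argument --- pick a set $C\subset D$ with $r(J_C)$ close to $1$, then split into the cases $R(D-C)=1$ and $R(D-C)<1$ --- but your handling of Case~2 is genuinely more economical than the paper's. By requiring at the outset that $r(J_C)>\max\{1-\varepsilon,\,r(J_D)\}$ (rather than merely $r(J_C)>1-\varepsilon$), you can invoke Lemma~\ref{lm7.13} for $C\subset D$ immediately and obtain $r(J_{D-C})=r(J_C)>1-\varepsilon$. In Case~2 this lets you take $B:=C$ (since $R(C)=1$) and find $A$ inside $D-C$ in one step. The paper, by contrast, does not arrange $r(J_C)>r(J_D)$ in advance; so in Case~2 it descends one level further, choosing $G\subset C$ with $r(J_G)>r(J_C)$, applying the Lemma~\ref{lm7.13} idea to $G\subset C$, and then performing a \emph{second} case split on whether $R(G)=1$ or $R(C-G)=1$. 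Your front-loading of the hypothesis of Lemma~\ref{lm7.13} eliminates that nested split.

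One small simplification: in Case~2 you need not appeal to (\ref{eq7.9.1}) to extract some $A\subset D-C$ with $r(J_A)>1-\varepsilon$; you have already shown $r(J_{D-C})>1-\varepsilon$, so $A:=D-C$ itself works.
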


\begin{proof}
Applying (\ref{eq7.9.1}) and the hypothesis $R(D) =1$ here, let $C\in \cS$ be such that
\begin{equation}\label{eq7.14.2}
C\subset D \,\,\,\mbox{and}\,\,\,r(J_C) >1-\varepsilon.
\end{equation}
Refer to (\ref{eq7.10.1}).  The rest of the argument for Lemma \ref{lm7.14} will be divided into two cases according to whether $R(D-C) =1$ or $R(D-C) <1$.

\textbf{Case 1:} $R(D-C) =1$.  Let $A=C$ and $B=D-C$. Then (\ref{eq7.14.1}) holds by (\ref{eq7.14.2}). That completes the argument for Case 1.

\textbf{Case 2:} $R(D-C) <1$.

If $R(C) <1$ were to  hold, then by (\ref{eq7.14.2}) and Lemma \ref{lm7.12}, one would have $R(D) = \max \{R(C),R(D-C)\}<1$, contradicting the hypothesis that $R(D) =1$.  Hence instead one has that
\begin{equation}
\label{eq7.14.3}
R(C) =1.
\end{equation}

Recall from (\ref{eq7.8.2}) that $r(J_C) <1$. Applying (\ref{eq7.14.3}) and (\ref{eq7.9.1}), let $G \in \cS$ be such that
\begin{equation}\label{eq7.14.4}
G\subset C\,\,\,\mbox{and}\,\,\,r(J_G) >r(J_C).
\end{equation}

Refer to the first part of (\ref{eq7.14.4}). Our next task is to show that
\begin{equation}\label{eq7.14.5}
r(J_{C-G}) = r(J_G).
\end{equation}
If instead $r(J_{C-G}) \not= r(J_G)$ were to hold, then by Lemma \ref{lm7.11}(III) and then (\ref{eq7.14.4}), one would have
\[
r(J_C) = \max\left\{ r(J_{C-G}), r(J_G) \right\}\ge r(J_G) >r(J_C),\nonumber
\]
a contradiction.  Hence (\ref{eq7.14.5}) holds.

Combining (\ref{eq7.14.5}) with (\ref{eq7.14.4}) and (\ref{eq7.14.2}), one now has that
\begin{equation}\label{eq7.14.6}
G\subset C \subset D \,\,\mbox{and}\,\,r(J_{C-G}) = r(J_G)> r(J_C) >1-\varepsilon.
\end{equation}

Referring to (\ref{eq7.14.3}) and (\ref{eq7.14.4}) (its ``inclusion'' $G\subset C$, also listed in (\ref{eq7.14.6})), one has by Lemma \ref{lm7.12} that
\[
1= R(C) = \max\{R(C-G),R(G)\}
\]
and hence either $R(C-G) =1$ or $R(G) =1$.

If $R(G) =1$, then let $A= C-G$ and $B=G$. If instead $R(G)<1$ and (hence) $R(C-G) =1$, then instead let $A=G$ and $B=C-G$. In either case, $A\cup B = C\subset D$ by (\ref{eq7.14.6}), and in fact by (\ref{eq7.14.6}) all parts of (\ref{eq7.14.1}) are satisfied. That completes the argument for Case 2.

The proof of Lemma \ref{lm7.14} is complete.
\end{proof}

\begin{lemma}\label{lm7.15} In Context \ref{ctxt7.1} (that is, Context \ref{ctxt5.1} together with eq.\ (\ref{eq7.8.2})), the following holds:

If $R(S) =1$, then there exists a sequence $(A_1,A_2,A_3, \dots)$ of (pairwise) disjoint sets, with $A_k \in \cS$ for each $k\in \N$, such that -- writing $A_k$ also as $A(k)$ for typographical convenience -- one has that $\lim_{k\to\infty} r(J_{A(k)}) =1$.
\end{lemma}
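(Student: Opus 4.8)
The plan is to construct the sequence recursively, applying Lemma~\ref{lm7.14} over and over with its ``$\varepsilon$'' parameter shrinking to $0$. Fix once and for all a sequence of numbers $\varepsilon_k \in (0,1)$ with $\varepsilon_k \to 0$ as $k \to \infty$ (for instance $\varepsilon_k := 1/(k+1)$). Set $D_0 := S$; by the hypothesis one has $R(D_0) = R(S) = 1$. Then I would define sets $A_k, B_k, D_k \in \cS$ ($k \in \N$) inductively as follows: given $D_{k-1} \in \cS$ with $R(D_{k-1}) = 1$, apply Lemma~\ref{lm7.14} with $\varepsilon = \varepsilon_k$ and $D = D_{k-1}$ to obtain disjoint sets $A_k, B_k \in \cS$ such that $A_k \cup B_k \subset D_{k-1}$, $r(J_{A_k}) > 1 - \varepsilon_k$, and $R(B_k) = 1$; then put $D_k := B_k$. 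Since $R(D_k) = R(B_k) = 1$, the hypothesis needed to continue the induction is again in force, so this produces the full sequence $(A_k, B_k, D_k)_{k \in \N}$.

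Next I would check that $(A_1, A_2, A_3, \dots)$ has the required properties. From $A_k \cup B_k \subset D_{k-1}$ and $D_k = B_k$ one gets the nesting $B_1 \supset A_2 \cup B_2 \supset B_2 \supset A_3 \cup B_3 \supset B_3 \supset \cdots$, hence $B_1 \supset B_2 \supset B_3 \supset \cdots$, while also $A_{k+1} \subset D_k = B_k$ for every $k$. To see the $A_k$ are pairwise disjoint, fix indices $k < j$: then $A_j \subset B_{j-1} \subset B_k$ (using $j - 1 \ge k$ and the nesting of the $B$'s), whereas $A_k$ and $B_k$ are disjoint, so $A_k \cap A_j \subset A_k \cap B_k = \emptyset$. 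Thus $(A_k)_{k\in\N}$ is a sequence of pairwise disjoint members of $\cS$.

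Finally, for the limit: by eq.~(\ref{eq7.7.1}) each $J_{A_k}$ lies in $\cL^2_{ub0}(\mu) \subset \cL^2_{ub}(\mu)$, so $r(J_{A_k}) \le 1$ by eq.~(\ref{eq7.4.1}); combining this with $r(J_{A_k}) > 1 - \varepsilon_k$ and $\varepsilon_k \to 0$ gives $r(J_{A(k)}) \to 1$ as $k \to \infty$, which is the assertion of Lemma~\ref{lm7.15}.

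There is no real obstacle in this step: all of the analytic substance has already been absorbed into Lemma~\ref{lm7.14} (and, through it, Lemmas~\ref{lm7.11}--\ref{lm7.13}), and what remains is bookkeeping. The only points deserving a moment of care are (i) phrasing the recursion so that the input hypothesis ``$R(D_{k-1}) = 1$'' is genuinely available at each stage — which it is, precisely because Lemma~\ref{lm7.14} hands back a set $B_k$ with $R(B_k) = 1$ that we rename $D_k$ — and (ii) the verification of pairwise disjointness, which rests on the monotone nesting $B_1 \supset B_2 \supset \cdots$ of the ``leftover'' sets together with the disjointness of $A_k$ from $B_k$.
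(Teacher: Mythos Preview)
Your proposal is correct and is essentially the same argument as the paper's own proof: both proceed by recursively applying Lemma~\ref{lm7.14} with shrinking $\varepsilon$ (the paper uses $\varepsilon = 2^{-k}$, you allow any $\varepsilon_k \to 0$), obtaining nested ``leftover'' sets $B_k$ with $R(B_k)=1$ and verifying pairwise disjointness of the $A_k$ via that nesting. The only differences are cosmetic---your extra renaming $D_k := B_k$ and the generic choice of $\varepsilon_k$---and do not affect the substance.
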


\begin{proof}
As in the statement of Lemma \ref{lm7.15}, the notations $A_k$ and $A(k)$ will mean the same thing.

We shall recursively define two sequences $(A_1,A_2,A_3,\dots)$ and $(B_0,B_1,B_2,B_3,\dots)$ of sets $\in \cS$ such that for each $k\in \N$,
\begin{align}
&\mbox{the sets $A_k$ and $B_k$ are disjoint,}\label{eq7.15.1}\\
&A_k\cup B_k \subset B_{k-1} \label{eq7.15.2}\\
&r\left( J_{A(k)}\right) > 1-2^{-k},\,\,\mbox{and}\label{eq7.15.3}\\
&R(B_k) =1. \label{eq7.15.4}
\end{align}

To start off, define the set $B_0 \in \cS$ by $B_0 =S$. Then by hypothesis, $R(B_0) = R(S) =1$.

Now for the recursion step, suppose $k$ is a positive integer, and the set $B_{k-1} \in \cS$ is already defined, such that $R(B_{k-1})=1$. Applying Lemma \ref{lm7.14}, let $A_k$, $B_k\in \cS$ be such that (\ref{eq7.15.1}), (\ref{eq7.15.2}), (\ref{eq7.15.3}), and (\ref{eq7.15.4}) hold.

That completes the recursive definition of the sets $A_k$, $k\ge 1$ and $B_k$, $k\ge 0$.

By (\ref{eq7.15.3}), $\lim_{k\to\infty} r(J_{A(k)}) =1$. To complete the proof of Lemma \ref{lm7.15}, all that remains is to show that the sets $A_k$, $k\in \N$ are (pairwise) disjoint.

Suppose $j$ and $\ell$ are positive integers such that $j<\ell$. It will suffice to show that $A_j$ and $A_\ell$ are disjoint.

By (\ref{eq7.15.2}), one has that $B_0 \supset B_1 \supset B_2 \supset B_3 \supset \dots\,.$ Hence $B_j\supset B_{\ell-1}$ (with equality if $\ell = j+1$).  Hence by (\ref{eq7.15.2}), $A_\ell \subset B_{\ell-1} \subset B_j$. By (\ref{eq7.15.1}), the sets $A_j$ and $B_j$ are disjoint. Hence the sets $A_j$ and $A_\ell$ are disjoint. That completes the proof of Lemma \ref{lm7.15}.
\end{proof}

\begin{lemma}\label{lm7.16}
In Context \ref{ctxt7.1} (that is, Context \ref{ctxt5.1} together with eq.\ (\ref{eq7.8.2})), the following holds (see (\ref{eq7.9.1})):
\begin{equation}\label{eq7.16.1}
R(S) <1.
\end{equation}
\end{lemma}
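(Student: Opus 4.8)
I would argue by contradiction. Suppose $R(S) = 1$ (recall $R(S) \le 1$ by (\ref{eq7.10.1})). By Lemma \ref{lm7.15} there are pairwise disjoint sets $A_k \in \cS$ ($k \in \N$) with $r(J_{A_k}) \to 1$; indeed by (\ref{eq7.15.3}) one may take $r(J_{A_k}) > 1 - 2^{-k}$. Since the $A_k$ are disjoint, $\sum_k \mu(A_k) \le \mu(S) = 1$, so $\mu(A_k) \to 0$; and since discarding an initial block of the sequence $(A_k)$ preserves $r(J_{A_k}) \to 1$, I may assume in addition that $\sum_k \mu(A_k)$ is as small as I wish. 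The aim is to build from these sets a single $A \in \cS$ with $r(J_A) = 1$, which contradicts (\ref{eq7.8.2}) (the reformulation of condition (A3) asserting $r(J_B) < 1$ for every $B \in \cS$).

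The candidate is $A := \bigcup_k A_k \in \cS$. Since the $A_k$ are disjoint with $\sum_k \mu(A_k) < \infty$ one has $J_A = \sum_k J_{A_k}$ pointwise and in $\cL^2(\mu)$, so for each $n \in \N$
\[
E\bigl[J_A(X_0) J_A(X_{\doub(n)})\bigr] = \sum_k E\bigl[J_{A_k}(X_0) J_{A_k}(X_{\doub(n)})\bigr] + \sum_{j \ne k} E\bigl[J_{A_j}(X_0) J_{A_k}(X_{\doub(n)})\bigr].
\]
Every diagonal term is $\ge 0$ by (\ref{eq5.3.5}) (here $J_{A_k} \in \cL^2_{ub}(\mu)$ by (\ref{eq7.7.1}), and $\doub(n) = 2^n$ is even), while every off-diagonal term is $\ge -\mu(A_j)\mu(A_k)$ by (\ref{eq7.7.4}), since $P(\{X_0 \in A_j\} \cap \{X_{\doub(n)} \in A_k\}) \ge 0$. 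Hence
\[
E\bigl[J_A(X_0) J_A(X_{\doub(n)})\bigr] \ge \sum_k E\bigl[J_{A_k}(X_0) J_{A_k}(X_{\doub(n)})\bigr] - \Bigl(\textstyle\sum_k \mu(A_k)\Bigr)^{2}.
\]
By Definition \ref{df7.3} and the monotonicity (\ref{eq5.3.7}), for each $k$ the quantity $\bigl(E[J_{A_k}(X_0) J_{A_k}(X_{\doub(n)})]\bigr)^{1/\doub(n)}$ increases in $n$ to $r(J_{A_k}) > 1 - 2^{-k}$, hence exceeds $1 - 2^{-k}$ once $n$ is large. Inserting this into the displayed lower bound --- with the initial thinning of $(A_k)$ chosen fine enough and, for each prescribed $\varepsilon > 0$, the lag $\doub(n)$ chosen to match whichever retained $A_k$ has $r(J_{A_k})$ nearest $1$ --- I would push $\bigl(E[J_A(X_0) J_A(X_{\doub(n)})]\bigr)^{1/\doub(n)}$ above $1 - \varepsilon$ for a suitable $n$; since $\varepsilon$ is arbitrary and $r(J_A) \le 1$ always, this forces $r(J_A) = 1$, the contradiction sought.

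The delicate point is exactly this coordination of scales: the lag at which $E[J_{A_k}(X_0) J_{A_k}(X_{\doub(n)})]$ comes close to revealing the value $r(J_{A_k})$ is governed by $A_k$ and is not controlled in advance (the only universal estimate, from (\ref{eq7.7.5}), being $E[J_{A_k}(X_0) J_{A_k}(X_{\doub(n)})] \le \mu(A_k)$), so the retained subsequence of $(A_k)$ and, for each $\varepsilon$, the chosen lag must be arranged so that the diagonal contribution of the relevant piece outweighs the accumulated cross-term defect $(\sum_k \mu(A_k))^2$. Carrying that bookkeeping through --- or, should a more economical route be available, replacing it by splitting $A$ into two disjoint pieces and invoking Lemma \ref{lm7.11}(III) (which applies since the numbers $r(J_{A_k})$ cannot all coincide: a common value would have to exceed $1 - 2^{-k}$ for every $k$, i.e.\ equal $1$, against (\ref{eq7.8.2})) --- is where the substance of the proof lies; the expansion above is routine.
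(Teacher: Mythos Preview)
Your overall strategy matches the paper's: assume $R(S)=1$, invoke Lemma~\ref{lm7.15} to get disjoint $A_k$ with $r(J_{A_k})\to 1$, set $A=\bigcup_k A_k$, and derive $r(J_A)=1$, contradicting (\ref{eq7.8.2}). But the execution has a genuine gap, and it is precisely the gap you flag without closing.

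Your off-diagonal bound $E[J_{A_j}(X_0)J_{A_k}(X_{\doub(n)})]\ge -\mu(A_j)\mu(A_k)$ is \emph{uniform in $n$}, so the accumulated defect $\delta:=(\sum_k\mu(A_k))^2$ is a fixed positive constant once $A$ is defined. To beat it you would need, for arbitrarily small $\varepsilon$, some $k$ and $n$ with $E[J_{A_k}(X_0)J_{A_k}(X_{\doub(n)})]>(1-\varepsilon)^{\doub(n)}+\delta$. But by (\ref{eq7.7.5}) the left side is at most $\mu(A_k)$ for every $n$, so this already forces $\mu(A_k)>\delta$. The $k$'s with $r(J_{A_k})$ close to $1$ are large $k$'s, and nothing in Lemma~\ref{lm7.15} prevents $\mu(A_k)$ from being far smaller than $\delta$ for those $k$ (say $\mu(A_k)=2^{-2^k}$). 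An initial one-time thinning cannot help: it fixes $\delta$, but does not tie $\mu(A_k)$ to $\delta$ for the later $k$'s you will need. The Lemma~\ref{lm7.11}(III) route you mention does not avoid this either --- it controls $r(J_{A\cup B})$ in terms of $r(J_A),r(J_B)$ only when these differ, and gives no leverage on an infinite union.

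The paper fixes exactly this by replacing your uniform off-diagonal bound with two $n$-dependent ones and by building the sequence \emph{recursively}. For indices $\ell<k$ it uses (\ref{eq5.3.2}) to bound cross-terms by $[r(J_{A_\ell})]^{\doub(n)}$, which at the carefully chosen lag $n_k$ is small because the construction enforces a strict gap $r(J_{A_\ell})<t_k<u_k<r(J_{A_k})$ (eq.~(\ref{eq7.16.22})). For indices $\ell>k$ it uses the crude bound $\le\mu(A_\ell)$ as you do, but the recursion chooses $\mu(A_\ell)$ \emph{after} $n_k$ is fixed, small relative to $u_k^{\doub(n_k)}$ (eq.~(\ref{eq7.16.10})). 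With both halves of the cross-terms made $\le\frac{1}{20}u_k^{\doub(n_k)}$, the $k$-th diagonal term (which exceeds $u_k^{\doub(n_k)}$ by choice of $n_k$, eq.~(\ref{eq7.16.13})) dominates, giving $(E[J_A(X_0)J_A(X_{\doub(n_k)})])^{1/\doub(n_k)}>t_k\to 1$. The ``coordination of scales'' you allude to is not optional bookkeeping --- it is the entire content of the proof, and it requires interleaving the choice of the sets with the choice of the lags.
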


\textbf{Proof.} Suppose instead (see (\ref{eq7.10.1})) that
\begin{equation}\label{eq7.16.2}
R(S) =1.
\end{equation}
We shall aim for a contradiction.

The argument will be divided into several ``steps'', followed by a ``Claim 0'' and then a brief ``Conclusion of proof''.

\medskip
\textbf{Step 1.} Applying (\ref{eq7.16.2}) and Lemma \ref{lm7.15}, let $(C_1,C_2,C_3,\dots)$ be a sequence of sets $\in \cS$ such that
\begin{equation}\label{eq7.16.3}
\mbox{The sets $C_k$, $k\in \N$ are (pairwise) disjoint, and}
\end{equation}
\begin{equation}\label{eq7.16.4}
\lim_{k\to\infty} r(J_{C(k)}) =1.
\end{equation}
Here and below, the notation $C(k)$ means $C_k$, and is used in subscripts for typographical convenience.

By (\ref{eq7.16.3}),
\begin{equation}\label{eq7.16.5}
\sum^\infty_{k=1} \mu(C_k) = \mu \left( \bigcup^\infty_{k=1}C_k\right) \le \mu(S) =1.
\end{equation}
Thus the sum in (\ref{eq7.16.5}) converges.  Hence
\begin{equation}\label{eq7.16.6}
\lim_{k\to\infty}\mu(C_k) =0.
\end{equation}

\textbf{Step 2.} For each positive integer $k$, we shall define the following items:

\noindent (i) positive numbers, $t_k$ and $u_k$,

\noindent (ii) a positive integer $\zeta_k$ (also denoted $\zeta(k))$,

\noindent (iii) a set $A_k$ (also denoted $A(k))$ $\in \cS$, and

\noindent (iv) a positive integer $n_k$ (also denoted $n(k))$,

\noindent such that the following hold:
\begin{equation}\label{eq7.16.7}
\mbox{For all $k\ge 1$,}\quad 1 - \left(1/2^k\right) <t_k<u_k<1;
\end{equation}
\begin{equation}\label{eq7.16.8}
\mbox{for all $k\ge 1$,}\quad A_k = C_{\zeta(k)};
\end{equation}
\begin{equation}\label{eq7.16.9}
\mu(A_1) \le 1/4;
\end{equation}
\begin{equation}\label{eq7.16.10}
\mbox{for all $k\ge 2$,}\quad \mu(A_k) \le \min_{1\le i\le k-1} \left[\frac{1}{4^k} \cdot \frac{1}{20} \cdot u_i^{\doub(n(i))}\right];
\end{equation}
\begin{equation}\label{eq7.16.11}
\mbox{for all $k\ge 1$,}\quad r\left(J_{A(k)}\right) >u_k;
\end{equation}
\begin{equation}\label{eq7.16.12}
\mbox{for all $k\ge 1$,}\quad (u_k/t_k)^{\doub(n(k))} > 20\cdot k;\,\,\mbox{and}
\end{equation}
\begin{equation}\label{eq7.16.13}
\mbox{for all $k\ge 1$,}\quad \left( E\left[ J_{A(k)}(X_0)\cdot J_{A(k)}\left(X_{\doub(n(k))}\right)\right]\right)^{1/\doub(n(k))} > u_k.
\end{equation}
The definition is recursive and is as follows:

We start with $k=1$. Define the positive numbers $t_1$ and $u_1$ by $t_1 = 3/5$ and $u_1 =2/3$. Then (\ref{eq7.16.7}) holds for $k=1$. (That is just, with convenient slight abuse of grammar, a short way of saying that for $k=1$, the three inequalities in (\ref{eq7.16.7}) hold. Similar slight abuses of grammar are employed below.) Applying (\ref{eq7.16.4}) and (\ref{eq7.16.6}) and the fact that $u_1 = 2/3 <1$, let $\zeta_1$ be a positive integer such that $\mu(C_{\zeta(1)})\le 1/4$ and $r(J_{C(\zeta(1))}) > u_1$; and then define the set $A_1 \in \cS$ by $A_1:= C_{\zeta(1)}$.

Then (\ref{eq7.16.8}) and (\ref{eq7.16.11}) hold for $k=1$, and (\ref{eq7.16.9}) holds. Keeping in mind that $u_1/t_1 = (2/3)/(3/5) = 10/9>1$, and referring to (\ref{eq7.3.1}) and to (\ref{eq7.16.11}) for $k=1$, let $n_1$ be a positive integer sufficiently large that (\ref{eq7.16.12}) and (\ref{eq7.16.13}) hold for $k=1$. Now (\ref{eq7.16.9}) holds; and all of (\ref{eq7.16.7}), (\ref{eq7.16.8}), (\ref{eq7.16.11}), (\ref{eq7.16.12}), and (\ref{eq7.16.13}) holds for $k=1$. The definition for $k=1$ is complete.

Now here is the recursion step. Suppose $\ell\in \N$ is such that $\ell \ge 2$. Suppose that for each $k\in \{1,2,\dots,\ell-1\}$, the positive numbers $t_k$ and $u_k$, the positive integer $\zeta_k$, the set $A_k \in \cS$, and the positive integer $n_k$ have already been defined, such that (\ref{eq7.16.7}), (\ref{eq7.16.8}), (\ref{eq7.16.11}), (\ref{eq7.16.12}), and (\ref{eq7.16.13}) hold for all $k\in \{1,2,\dots,\ell-1\}$, (\ref{eq7.16.9}) holds, and (if $\ell \ge 3$) (\ref{eq7.16.10}) holds for all $k \in \N$ such that $2 \le k\le \ell-1$.

Referring   to (\ref{eq7.8.2}), let $t_\ell$ be a positive number such that
\begin{equation}\label{eq7.16.14}
\max\left\{1- (1/2)^\ell, r(J_{A(\ell-1)})\right\} < t_\ell <1,
\end{equation}
and then let $u_\ell$ be a positive number such that $t_\ell <u_\ell<1$. Then (\ref{eq7.16.7}) holds for $k=\ell$.

Recall that $0<u_k <1$ for all $k\in \{1,2,\dots, \ell-1\}$ by the ``recursion assumption'' of (\ref{eq7.16.7}) for $k\in \{1,2,\dots,\ell-1\}$, and that $0< u_\ell <1$ as well from (\ref{eq7.16.7}) for $k=\ell$ established just above.

Applying those observations and (\ref{eq7.16.4}) and (\ref{eq7.16.6}), let $\zeta_\ell$ be a positive integer sufficiently large that
\begin{equation}\label{eq7.16.15}
\zeta_\ell >\zeta_{\ell-1},
\end{equation}
\begin{equation}\label{eq7.16.16}
r\left( J_{C(\zeta(\ell))}\right) > u_\ell,\quad\mbox{and}
\end{equation}
\begin{equation}\label{eq7.16.17}
\mu\left( C_{\zeta(\ell)}\right) \le \min_{1\le i\le \ell-1} \left( \frac{1}{4^\ell} \cdot \frac{1}{20} \cdot u^{\doub(n(i))}_i\right).
\end{equation}
Then define the set $A_\ell \in \cS$ by $A_\ell := C_{\zeta(\ell)}$.  By (\ref{eq7.16.16}) and (\ref{eq7.16.17}), eqs.\ (\ref{eq7.16.8}), (\ref{eq7.16.10}), and (\ref{eq7.16.11}) hold for $k=\ell$.

Finally, keeping in mind that $u_\ell/t_\ell >1$ by (\ref{eq7.16.7}) for $k=\ell$ (again, already established a little bit above), and referring to (\ref{eq7.3.1}) and to (\ref{eq7.16.11}) for $k=\ell$ (established just above), let $n_\ell$ be a positive integer sufficiently large that (\ref{eq7.16.12}) and (\ref{eq7.16.13}) hold for $k= \ell$. Then all of (\ref{eq7.16.7}), (\ref{eq7.16.8}), (\ref{eq7.16.10}),  (\ref{eq7.16.11}), (\ref{eq7.16.12}), and (\ref{eq7.16.13}) holds for $k=\ell$.

That completes the recursion step.

The items $t_k$, $u_k$, $\zeta_k$, $A_k$, and $n_k$ have been recursively defined for all $k\in \N$.
\medskip

\textbf{Step 3.} Some simple technical observations will be pertinent to what follows.

By (\ref{eq7.16.15}), $\zeta_1<\zeta_2<\zeta_3<\dots$, and hence those positive integers are distinct. Hence by (\ref{eq7.16.3}), the sets $C_{\eta(1)}$, $C_{\eta(2)}$, $C_{\eta(3)},\dots$ are (pairwise) disjoint. Thus by (\ref{eq7.16.8}),
\begin{equation}\label{eq7.16.18}
\mbox{The sets $A_k$, $k\in\N$ are (pairwise) disjoint.}
\end{equation}

Also, by (\ref{eq7.16.9}) and (\ref{eq7.16.10}) (and the inequalities $0<u_k<1$ for $k\in \N$ from (\ref{eq7.16.7})) one has that $\mu(A_k) \le 1/4^k$ for each $k\in \N$. Consequently, of course,
\begin{equation}\label{eq7.16.19}
\sum^\infty_{k=1} \mu(A_k) \le \sum^\infty_{k=1} 1/4^k = 1/3 <1.
\end{equation}
Also, by (\ref{eq7.7.5}) (with $\ell =0$ and $B=A$ there) and again the sentence just before (\ref{eq7.16.19}), one has that for each $k\in\N$,
\[
E\left[\left(J_{A(k)}(X_0)\right)^2\right] \le \mu (A_k) \le 1/4^k.
\]
Hence
\begin{equation}\label{eq7.16.20}
\sum^\infty_{k=1} \left\| J_{A(k)}(X_0) \right\|_2 \le \sum^\infty_{k=1} 1/2^k =1<\infty.
\end{equation}

For any nonempty set $\Gamma \subset \N$, for the set $G:= \bigcup_{k\in \Gamma}A_k$,  by (\ref{eq7.16.18}), one has that $\mu(G) = \sum_{k\in \Gamma} \mu(A_k)$, and that for any $s\in S$, $I_G (s) = \sum_{k\in\Gamma}I_{A(k)}(s)$, and hence for any $s\in S$,  by (\ref{eq7.6.1}),
\begin{equation}\label{eq7.16.21}
J_G(s) = I_G(s) - \mu(G) =\sum_{k\in \Gamma} (I_{A(k)}(s) - \mu(A_k)) =\sum_{k\in \Gamma}J_{A(k)}(s).
\end{equation}

One other technical observation will be useful later on. By (\ref{eq7.16.7}), (\ref{eq7.16.11}), and (\ref{eq7.16.14}) (for $\ell \ge 2$), one has that
\[
t_1<u_1<r(J_{A(1)}) <t_2<u_2<r(J_{A(2)}) <t_3<u_3<r(J_{A(3)})< \dots\, .
\]
In particular, for any given $k\ge 2$,
\begin{equation}\label{eq7.16.22}
r(J_{A(1)}) <r(J_{A(2)})<\dots <r(J_{A(k-1)}) < t_k < u_k < r(J_{A(k)}) <r(J_{A(k+1)}) <r(J_{A(k+2)})<\dots\, .
\end{equation}

\textbf{Step 4.} Now define the set $B\in \cS$ by
\begin{equation}\label{eq7.16.23}
B = \bigcup^\infty_{k=1} A_k.
\end{equation}
By (\ref{eq7.16.21}) with $\Gamma = \N$ and (hence) $G=B$, one has that for any $s\in S$,
\begin{equation}\label{eq7.16.24}
J_B(s) = \sum^\infty_{k=1} J_{A(k)}(s).
\end{equation}

Our goal now is to show (under the assumption of (\ref{eq7.16.2})) that $r(J_B) =1$, thereby bringing about a contradiction to (\ref{eq7.8.2}). The main work for that purpose will be done in Claim 0 below.

\medskip
\textbf{Claim 0.} Suppose $k$ is an integer such that $k\ge 2$. Then
\begin{equation}\label{eq7.16.25}
\left( E\left[J_B(X_0) \cdot J_B(X_{\doub(n(k))})\right]\right)^{1/\doub(n(k))} >t_k.
\end{equation}

\textbf{Proof of Claim 0.}
The argument for Claim 0 will be divided into ``steps'' labeled Step A, Step B, $\dots$ , Step F.

\textbf{Step A.} For the given fixed integer $k\ge 2$ in the statement of Claim 0, define the sets $B(1)$ and $B(2)\in \cS$ as follows:
\begin{equation}\label{eq7.16.26}
B(1) = \bigcup^{k-1}_{i=1} A_i \quad\mbox{and}\quad B(2) = \bigcup^\infty_{i=k+1}A_i.
\end{equation}

By (\ref{eq7.16.21}) with $\Gamma = \{1,2,\dots,k-1\}$ (and hence $G=B(1)$ from (\ref{eq7.16.26})), one has that for any $s\in S$,
\begin{equation}\label{eq7.16.27}
J_{B(1)}(s) = \sum^{k-1}_{i=1}J_{A(i)}(s).
\end{equation}
Similarly, by (\ref{eq7.16.21}) with $\Gamma = \{k+1,k+2,k+3,\dots\,\}$ (and hence $G = B(2)$ from (\ref{eq7.16.26})), one has that for any $s\in S$,
\begin{equation}\label{eq7.16.28}
J_{B(2)}(s) = \sum^\infty_{i=k+1} J_{A(i)}(s).
\end{equation}
By (\ref{eq7.16.24}), (\ref{eq7.16.27}), and (\ref{eq7.16.28}), one has that for any $s\in S$,
\begin{equation}\label{eq7.16.29}
J_B(s) = J_{B(1)}(s) + J_{A(k)}(s) + J_{B(2)}(s).
\end{equation}
(Alternatively by (\ref{eq7.16.18}) and (\ref{eq7.16.26}), one has that the sets $B(1)$, $A(k)$, and $B(2)$ are (pairwise) disjoint, and their union is the set  $B$; and from that, eq.\ (\ref{eq7.16.29}) holds by a more direct argument analogous to the argument for eq.\ (\ref{eq7.16.21}) itself.)

\textbf{Step B.} By (\ref{eq7.16.29}), one has that
\begin{align}\label{eq7.16.30}
&E\left[J_B(X_0) \cdot J_B\left( X_{\doub(n(k))}\right)\right]\nonumber\\
&=E\left[\left(J_{B(1)}(X_0) + J_{A(k)}(X_0) + J_{B(2)}(X_0)\right) \cdot \left(J_{B(1)}\bigl(X_{\doub(n(k))}\bigr)+ J_{A(k)}\bigl( X_{\doub(n(k))}\bigr) +J_{B(2)}\bigl(X_{\doub(n(k))}\bigr)\right)\right]\nonumber\\
&=E\left[ J_{B(1)}(X_0) \cdot J_{B(1)}\left(X_{\doub(n(k))}\right)\right] + E\left[ J_{B(1)}(X_0) \cdot J_{A(k)}\left(X_{\doub(n(k))}\right)\right] \nonumber\\
&\quad+E\left[J_{B(1)}(X_0) \cdot J_{B(2)}\left(X_{\doub(n(k))}\right)\right] + E\left[ J_{A(k)}(X_0) \cdot J_{B(1)} \left(X_{\doub(n(k))}\right)\right]  \nonumber\\
&\quad + E\left[J_{A(k)}(X_0) \cdot J_{A(k)}\left(X_{\doub(n(k))}\right)\right] + E\left[J_{A(k)}(X_0) \cdot J_{B(2)}\left(X_{\doub(n(k))}\right)\right] \nonumber\\
&\quad+ E\left[ J_{B(2)}(X_0) \cdot J_{B(1)}\left(X_{\doub(n(k))}\right)\right] + E\left[ J_{B(2)}X_0 \cdot J_{A(k)}\left(X_{\doub(n(k))}\right)\right] \nonumber\\
&\quad+ E\left[ J_{B(2)}(X_0) \cdot J_{B(2)}\left(X_{\doub(n(k))}\right)\right].
\end{align}

\textbf{Step C.} For any set $D\in \cS$, by (\ref{eq7.16.27}), then (\ref{eq7.7.1}) and (\ref{eq5.3.2}), then ((\ref{eq7.7.1}) again and)  (\ref{eq7.3.1}), and then (\ref{eq7.16.22}) and finally (\ref{eq7.16.12}),
\begin{align}\label{eq7.16.31}
&\left|E \left[ J_{B(1)}(X_0) \cdot J_D\left( X_{\doub(n(k))}\right)\right] \right| =\left| E\left[\left( \sum^{k-1}_{\ell =1} J_{A(\ell)}(X_0)\right) \cdot J_D\left(X_{\doub(n(k))}\right)\right]\right| \nonumber\\
&= \left|\sum^{k-1}_{\ell=1} E\left[J_{A(\ell)}(X_0) \cdot J_D\left(X_{\doub(n(k))}\right)\right]\right| \le \sum^{k-1}_{\ell=1} \left| E\left[J_{A(\ell)}(X_0) \cdot J_D\left(X_{\doub(n(k))}\right)\right]\right| \nonumber\\
&\le \sum^{k-1}_{\ell=1} \left(E\left[J_{A(\ell)}(X_0) \cdot J_{A(\ell)}\left(X_{2\cdot \doub(n(k))}\right)\right]\right)^{1/2} \nonumber\\
&\le \sum^{k-1}_{\ell=1}\left(\left[r(J_{A(\ell)})\right]^{2\cdot \doub(n(k))}\right)^{1/2} = \sum^{k-1}_{\ell=1}\left[r(J_{A(\ell)})\right]^{\doub(n(k))} \nonumber\\
&\le \sum^{k-1}_{\ell=1} t^{\doub(n(k))}_k = (k-1) \cdot t_k^{\doub(n(k))} \le \frac{1}{20} \cdot u^{\doub(n(k))}_k.
\end{align}

\textbf{Step D.} For any set $D\in \cS$, by (\ref{eq7.16.20}),
\begin{align*}
&\sum^\infty_{\ell=k+1} E\left|J_{A(\ell)}(X_0) \cdot J_D\left(X_{\doub(n(k))}\right)\right|\le \sum^\infty_{\ell = k+1} \left[\left\| J_{A(\ell)}(X_0) \right\|_2 \cdot \left\|J_D(X_{\doub(n(k))})\right\|_2\right]\\
&= \left\|J_D\left(X_{\doub(n(k))}\right)\right\|_2 \cdot \sum^\infty_{\ell= k+1} \left\|J_{A(\ell)}(X_0)\right\|_2 <\infty.
\end{align*}
Hence by (\ref{eq7.16.28}), (\ref{eq7.7.5}), and (\ref{eq7.16.10}), for any set $D\in \cS$,
\begin{align}\label{eq7.16.32}
&\left| E\left[ J_{B(2)}(X_0) \cdot J_D\left(X_{doub(n(k))}\right)\right]\right| = \left|E\left[\left(\sum^\infty_{\ell=k+1} J_{A(\ell)}(X_0) \right) \cdot \left(J_D\left(X_{\doub(n(k))}\right)\right)\right]\right| \nonumber\\
&=\left|\sum^\infty_{\ell = k+1} E\left[ J_{A(\ell)} (X_0) \cdot J_D\left(X_{\doub(n(k))}\right)\right]\right|
\le \sum^\infty_{\ell=k+1} \left|  E\left[J_{A(\ell)}(X_0) \cdot J_D\left(X_{\doub(n(k))}\right)\right]\right| \nonumber\\
&\le \sum^\infty_{\ell=k+1} \mu(A_\ell) \le \sum^\infty_{\ell=k+1} \left[\frac{1}{4^\ell} \cdot \frac{1}{20} \cdot u^{\doub(n(k))}_k\right] \le \frac{1}{20} \cdot u^{\doub(n(k))}_k.
\end{align}

\textbf{Step E.} By strict stationarity and reversibility, the random vectors $(X_0, X_{\doub(n(k))})$ and $(X_{\doub(n(k))}, X_0)$ have the same distribution (on $(S^2, \cS^2))$. Hence for any set $D\in \cS$, one has by (\ref{eq7.16.31}) that
\begin{equation}\label{eq7.16.33}
\left| E\left[ J_D(X_0) \cdot J_{B(1)}(X_{\doub(n(k))})\right]\right| =\left|E\left[J_D(X_{\doub(n(k))})\cdot J_{B(1)}(X_0)\right]\right| \le \frac{1}{20} u^{\doub(n(k))}_k.
\end{equation}
Similarly, for any set $D\in \cS$, one has by (\ref{eq7.16.32}) that
\begin{equation}\label{eq7.16.34}
\left|E\left[J_D(X_0) \cdot J_{B(2)}\left(X_{\doub(n(k))}\right)\right]\right| =\left|E\left[J_D\left(X_{\doub(n(k))}\right) \cdot J_{B(2)}(X_0)\right]\right|\le \frac{1}{20} u^{\doub(n(k))}_k.
\end{equation}
By (\ref{eq7.16.31}), (\ref{eq7.16.32}), (\ref{eq7.16.33}), and (\ref{eq7.16.34}), of the nine terms in the far right side of (\ref{eq7.16.30}), eight of them are each bounded in absolute value by $(1/20)\cdot u^{\doub(n(k))}_k$. The lone exception there is the fifth term, $E[J_{A(k)}(X_0) \cdot J_{A(k)}(X_{\doub(n(k))})]$.

Hence by (\ref{eq7.16.30}) itself,
\begin{equation}\label{eq7.16.35}
E\left[J_B(X_0) \cdot J_B\left( X_{\doub(n(k))}\right)\right] \ge E\left[ J_{A(k)}(X_0) \cdot J_{A(k)}\left(X_{\doub(n(k))}\right)\right] - \frac{8}{20} \cdot u^{\doub(n(k))}_k.
\end{equation}

\textbf{Step F.} Now by (\ref{eq7.16.35}) and (\ref{eq7.16.13}),
\begin{equation}\label{eq7.16.36}
E\left[ J_B(X_0) \cdot J_B\left(X_{\doub(n(k))}\right)\right] \ge u^{\doub(n(k))}_k - \frac{8}{20} \cdot u^{\doub(n(k))}_k
> \frac{1}{2} \cdot u^{\doub(n(k))}_k.
\end{equation}
Also, by (\ref{eq7.16.12}), $u^{\doub(n(k))}_k/t^{\doub(n(k))}>2$ and hence $(1/2) u^{\doub(n(k))}_k > t^{\doub(n(k))}_k$. Hence by (\ref{eq7.16.36}),
\[
E\left[J_B(X_0) \cdot J_B\left( X_{\doub(n(k))}\right)\right] > t^{\doub(n(k))}_k.
\]
Hence (\ref{eq7.16.25}) holds. That completes the proof of Claim 0.

\medskip
\textbf{Conclusion of proof of Lemma \ref{lm7.16}.} For each $k\ge 2$, by Claim 0 and (\ref{eq7.3.1}), $r(J_B)> t_k$. By (\ref{eq7.16.7}), $t_k \to 1$ as $k\to\infty$. Hence (see (\ref{eq7.4.1})), $r(J_B) =1$. But that contradicts (\ref{eq7.8.2}). Hence (\ref{eq7.16.2}) must be false. Hence (see (\ref{eq7.10.1})), eq.\ (\ref{eq7.16.1}) must hold after all. That completes the proof of Lemma \ref{lm7.16}.
\hfill$\square$\break

\begin{proposition}\label{pr7.17}
(I) Proposition \ref{pr3.5}(II) holds.

(II) More specifically, if condition (A3) in Theorem \ref{th3.3} holds, then (see (\ref{eq7.16.1})) for any pair of sets $A$, $B \in \cS$ and any positive integer $n$,
\begin{equation}\label{eq7.17.1}
\left| P(\{X_0 \in A\} \cap \{X_{\doub(n)} \in B\}) - \mu(A)\mu(B)\right| \le [R(S)]^{\doub(n)}.
\end{equation}
\end{proposition}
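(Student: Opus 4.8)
The plan is to obtain part (II) by a short four-step estimate using only material already in place, and then to read off part (I) immediately from part (II) together with Lemma \ref{lm7.16}.

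First I would rewrite the left-hand side of (\ref{eq7.17.1}) in terms of the centered indicators $J_A$, $J_B$ of Definition \ref{df7.6}: by eq.\ (\ref{eq7.7.4}) in Remark \ref{rm7.7}(C), for any $A,B\in\cS$ and any $n\in\N$,
\[
\left|P(\{X_0\in A\}\cap\{X_{\doub(n)}\in B\})-\mu(A)\mu(B)\right|=\left|E\left[J_A(X_0)\cdot J_B(X_{\doub(n)})\right]\right|.
\]
Since $J_A,J_B\in\cL^2_{ub0}(\mu)\subset\cL^2_{ub}(\mu)$ by (\ref{eq7.7.1}), I would then apply the Cauchy-type inequality (\ref{eq5.3.2}) of Remark \ref{rm5.3}(B) with $g=J_A$, $h=J_B$, and $m=\doub(n)$ (so that $2m=\doub(n+1)$):
\[
\left|E\left[J_A(X_0)\cdot J_B(X_{\doub(n)})\right]\right|\le\left(E\left[J_A(X_0)\cdot J_A(X_{\doub(n+1)})\right]\right)^{1/2}.
\]
Next, (\ref{eq7.4.2}) applied to $g=J_A$ at index $n+1$ bounds the right-hand side above by $\left([r(J_A)]^{\doub(n+1)}\right)^{1/2}=[r(J_A)]^{\doub(n)}$, and finally $r(J_A)\le R(S)$ directly from the definition (\ref{eq7.9.1}) of $R(S)$ (taking the set $A$, which satisfies $A\subset S$). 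Chaining these bounds gives (\ref{eq7.17.1}), which is exactly part (II).

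For part (I), Lemma \ref{lm7.16} gives $R(S)<1$; set $r:=\max\{R(S),1/2\}\in(0,1)$. Then part (II) yields, for every pair $A,B\in\cS$ and every $n\in\N$, the bound $\left|P(\{X_0\in A\}\cap\{X_{\doub(n)}\in B\})-\mu(A)\mu(B)\right|\le r^{\doub(n)}$, which in particular is $\ll r^{\doub(n)}$ as $n\to\infty$; this is condition (A2) in Theorem \ref{th3.3}. As observed in Remark \ref{rm7.2}, establishing (A2) under the assumptions of Context \ref{ctxt7.1} completes the proof of Proposition \ref{pr3.5}(II).

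I do not expect any genuine obstacle here: all the substantive work has already been carried out, in Lemma \ref{lm7.16} (the proof that $R(S)<1$) and in the estimates of Section \ref{sc5}. The only point to be careful about is the bookkeeping of the powers of $2$ — confirming that the choice $m=\doub(n)$ in (\ref{eq5.3.2}) produces $\doub(n+1)$ on the right-hand side, so that the square root cancels the extra doubling exactly and leaves $[r(J_A)]^{\doub(n)}$ rather than an expression off by a power.
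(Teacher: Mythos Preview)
Your proposal is correct and follows essentially the same route as the paper's own proof: rewrite via (\ref{eq7.7.4}), apply (\ref{eq5.3.2}) with $m=\doub(n)$, bound by $[r(J_A)]^{\doub(n)}$ via the definition of $r(\cdot)$, and then by $[R(S)]^{\doub(n)}$ via (\ref{eq7.9.1}); part (I) is then immediate from Lemma \ref{lm7.16}. Your explicit choice $r:=\max\{R(S),1/2\}$ to guarantee $r\in(0,1)$ is a small nicety the paper leaves implicit.
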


\begin{proof} Obviously by Lemma \ref{lm7.16}, (I) follows from (II) (which gives a stronger conclusion under the same hypothesis). Our task is to prove (II).

\textbf{Proof of (II).} Suppose $A$, $B\in \cS$ and $n\in \N$. They by (\ref{eq7.7.4}), then (\ref{eq7.7.1}) and (\ref{eq5.3.2}), and then (\ref{eq7.3.1}) and finally (\ref{eq7.9.1}),
\begin{align*}
&\left|P\left(\{X_0\in A\}\cap \{X_{\doub(n)}\in B\}\right) - \mu(A)\mu(B)\right|
=\left| E\left[J_A(X_0) \cdot J_B(X_{\doub(n)})\right]\right|\\
&\le \left(E\left[ J_A(X_0) \cdot J_A(X_{\doub(n+1)})\right]\right)^{1/2} \le \left([r(J_A)]^{\doub(n+1)}\right)^{1/2}
=[r(J_A)]^{\doub(n)}\le [R(S)]^{\doub(n)}.
\end{align*}
That is, (\ref{eq7.17.1}) holds. That completes the proof of Statement (II) in Proposition \ref{pr7.17}.

That completes the proof of Proposition \ref{pr7.17}; and in particular that completes (our review of) the proof of Proposition \ref{pr3.5}(II) (which is Statement (I) in Proposition \ref{pr7.17}).
\end{proof}

\end{document}